\theoremstyle{plain}
\newtheorem*{thrm}{Theorem}
\newtheorem{thm}{Theorem}[section]
\newtheorem{prop}[thm]{Proposition}
\newtheorem{cor}[thm]{Corollary}
\newtheorem{lemma}[thm]{Lemma}
\theoremstyle{definition}
\newtheorem{defn}[thm]{Definition}
\newtheorem{defns}[thm]{Definitions}
\theoremstyle{remark}
\newtheorem{remark}[thm]{Aside}
\newtheorem{eg}[thm]{Example}
\newcommand{\proofof}[1]{\end{#1}\begin{proof}}
\renewcommand\section{\@startsection {section}{1}{\z@}%
  {-3.5ex \@plus -1ex \@minus -.2ex}{2.3ex \@plus.2ex}%
  {\normalfont\large\bfseries}}
\renewcommand\subsection{\@startsection{subsection}{2}{\z@}%
  {-3.25ex\@plus -1ex \@minus -.2ex}{1.5ex \@plus .2ex}%
  {\normalfont\bfseries}}
\newcommand{\sh}[1]{\mathcal{#1}}
\newcommand{\N}{{\mathbb N}}
\newcommand{\Z}{{\mathbb Z}}
\newcommand{\Q}{{\mathbb Q}}
\newcommand{\R}{{\mathbb R}}
\newcommand{\F}{{\mathbb F}}
\newcommand{\B}{{\mathbb B}}
\newcommand{\G}{{\mathbb G}}
\renewcommand{\P}{{\mathbb P}}
\newcommand{\tens}{\mathbin{\otimes}}
\newcommand{\Hom}{\mathrm{Hom}}
\newcommand{\GL}{\mathrm{GL}}
\newcommand{\SL}{\mathrm{SL}}
\newcommand{\Aff}{\mathrm{Aff}}
\DeclareMathOperator*\colim{colim}
\DeclareMathAlphabet{\mathrmsl}{OT1}{cmr}{m}{sl}
\newcommand{\rssymb}[2]{\newcommand{#1}{\mathrmsl{#2}} }
\newcommand{\oper}[3][n]{\newcommand{#2}{\mathop{\mathrm{#3}}%
\ifx n#1\nolimits\else\limits\fi} }
\newcommand{\rsoper}[3][n]{\newcommand{#2}{\mathop{\mathrmsl{#3}}%
\ifx n#1\nolimits\else\limits\fi} }
\oper\Ad{Ad}
\oper\ad{ad}
\oper\val{val}
\oper\coker{coker}
\oper\mult{mult}
\oper\Iso{Iso}
\oper\End{End}
\oper\Aut{Aut}
\oper\Sub{Sub}
\oper\Alt{Alt}
\oper\Ext{Ext}
\oper\Pic {Pic}
\oper\Sym{Sym}
\oper\Spec{Spec}
\oper\Spf{Spf}
\oper\Sp{Sp}
\oper\Spa{Spa}
\oper\Proj{Proj}
\rsoper\divg{div}
\rsoper{\sym}{sym}
\rsoper{\alt}{alt}
\rsoper\trace{tr}
\rssymb\id{id}
\newcommand{\thismonth}{\ifcase\month\or
  January\or February\or March\or April\or May\or June\or
  July\or August\or September\or October\or November\or December\fi
  \space\number\year}
\newcommand{\CPA}{\mathrm{CPA}}
\newcommand{\Alg}{\mathrm{Alg}}
\newcommand{\Mod}{\mathrm{Mod}}
\newcommand{\Prim}{\mathrm{Prim}}
\newcommand{\Poly}{\mathbf{Poly}}
\newcommand{\V}{\mathbb V}
\title{Projective modules over polyhedral semirings}
\author{Andrew W. Macpherson}
\begin{document}

\maketitle
\begin{abstract}I classify projective modules over idempotent semirings that are free on a monoid. The analysis extends to the case of the semiring of convex, piecewise-affine functions on a polyhedron, for which projective modules correspond to convex families of weight polyhedra for the general linear group.
\end{abstract}
\setcounter{tocdepth}{1}
\tableofcontents

\section{Introduction}

This paper begins the geometric study of module theory over a class of idempotent semirings that are of basic importance in skeletal and tropical geometry: those generated freely by a monoid of \emph{monomials}. The projective modules over these semirings can be described in terms of a simpler category of partially ordered modules over the underlying monoid.

In the related case of the semiring of convex, piecewise-affine functions on a polyhedron, this latter category can itself be realised in terms of convex geometric data. This last intepretation of the classification suggests explicit descent criteria for modules over such `polyhedral' semirings.

The only existing work of which I am aware in this direction is \cite{crumpy}, which addresses finite projective modules over the real semifield $\R_\vee=\R\sqcup\{-\infty\}$ (therein denoted $\mathbb T$). As far as I know, the classification scheme given in the present paper is new even in that case.

\subsection*{Results}
The moral of the paper will be that over semirings generated freely by monomials, projective modules are defined by monomial inequalities. 

As a warm-up, we have the following result for the case of the simplest possible semiring, the Boolean semifield $\B=\{-\infty,0\}$:

\begin{thrm}[\ref{B_PRIM_THM}]Let $\mu$ be a finite $\B$-module. The following are equivalent:
\begin{enumerate}\item $\mu$ is projective;
\item $\mu$ is dualisable;
\item $\mu$ is flat;
\item $\mu$ is free on a poset;
\item $\mu$ has unique irredundant primitive decompositions.
\end{enumerate}\end{thrm}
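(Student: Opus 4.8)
Throughout I identify a finite $\B$-module with a finite lattice: the module axioms over $\B=\{-\infty,0\}$ force the addition to be idempotent, so $\mu$ is a join-semilattice with least element $-\infty$, and finiteness then supplies all meets. The free module on a set $S$ is the Boolean lattice $2^S$ (finite subsets under union), and the free module on a poset $P$ — the value at $P$ of the left adjoint to the forgetful functor to posets — is the lattice $\mathcal{O}(P)$ of downsets of $P$; by Birkhoff duality these are exactly the finite distributive lattices, with $P$ recovered as the poset of join-irreducible (``primitive'') elements. My plan is to treat distributivity as the organising notion and to prove the cycle (iv)$\Rightarrow$(i)$\Rightarrow$(ii)$\Rightarrow$(iii)$\Rightarrow$(iv) together with the separate equivalence (iv)$\Leftrightarrow$(v).

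The three implications (iv)$\Rightarrow$(i)$\Rightarrow$(ii)$\Rightarrow$(iii) are formal. For (iv)$\Rightarrow$(i) I exhibit $\mathcal{O}(P)$ as a retract of the free module $2^P$: the map $2^P\to\mathcal{O}(P)$ sending a subset to the downset it generates is a surjective $\B$-module homomorphism, and the inclusion of downsets as subsets is a section, since a union of downsets is a downset; hence $\mathcal{O}(P)$ is a direct summand of a free module. For (i)$\Rightarrow$(ii) I use finiteness: a finite projective module is a retract of a finite free module $2^{[n]}$, which is dualisable (indeed self-dual, with the usual evaluation and coevaluation satisfying the triangle identities), and a retract of a dualisable object is dualisable. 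For (ii)$\Rightarrow$(iii), if $\mu$ is dualisable with dual $\mu^\vee$ then $\mu\otimes-\cong\underline{\Hom}(\mu^\vee,-)$ is a right adjoint, hence preserves finite limits and in particular monomorphisms, which is flatness.

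The crux is (iii)$\Rightarrow$(iv): a flat finite $\B$-module is distributive. I would argue by contraposition through the Dedekind--Birkhoff criterion, that a non-distributive finite lattice contains a sublattice isomorphic to $M_3$ or $N_5$, each carrying a defining relation of the shape $a\vee c=b\vee c$, $a\wedge c=b\wedge c$ with $a\neq b$. The strategy is to promote such a relation to an injection $A\hookrightarrow B$ of $\B$-modules whose tensor with $\mu$ fails to be injective, using the explicit presentation of the tensor product of semilattices as formal joins of simple tensors modulo bilinearity. This translation of non-distributivity into a concrete failure of injectivity, together with the bookkeeping of what survives in $(-)\otimes\mu$, is where I expect the real work to lie. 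A cleaner alternative, if a Lazard-type theorem is available here, is to show that a flat module is a filtered colimit of finite free modules; for finite $\mu$ the identity would then factor through one of them, exhibiting $\mu$ as a retract of a free module and giving (iii)$\Rightarrow$(i) directly.

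Finally, (iv)$\Leftrightarrow$(v) is Birkhoff duality read through primitive decompositions. If $\mu\cong\mathcal{O}(P)$, the primitives are the principal downsets $\downarrow p$, every downset $D$ is the join of the $\downarrow p$ over the maximal $p\in D$, and distributivity forces this irredundant decomposition to be unique: if $\bigvee R=\bigvee R'$ irredundantly with $R,R'$ antichains of primitives, then for $r\in R$ we have $r=r\wedge\bigvee R'=\bigvee_{r'\in R'}(r\wedge r')$, so primitivity of $r$ gives $r\le r'$ for some $r'$, and by symmetry $R=R'$. Conversely, unique irredundant primitive decompositions let me promote the comparison map $x\mapsto\{\text{primitives }\le x\}$ to a well-defined isomorphism $\mu\xrightarrow{\sim}\mathcal{O}(\text{primitives})$, so that $\mu$ is free on the poset of its primitives. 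The one subtlety to respect is the paper's definition of \emph{primitive}: uniqueness of decompositions into mere join-irreducibles characterises only the weaker meet-distributive lattices, so it is the join-prime flavour built into the notion of primitive that upgrades (v) to full distributivity.
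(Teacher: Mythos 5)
Your formal implications (iv)$\Rightarrow$(i)$\Rightarrow$(ii)$\Rightarrow$(iii) and your argument for (iv)$\Rightarrow$(v) are correct, and the Birkhoff-duality framing is attractive; but there is a genuine gap exactly at the step you yourself flag as the crux, and neither of your two options for (iii)$\Rightarrow$(iv) closes the theorem. Option (a) --- contraposition through $M_3$ and $N_5$ --- is only a strategy: producing an injection $A\hookrightarrow B$ of $\B$-modules whose tensor with $\mu$ fails to be injective is the entire content of the implication (in effect the nontrivial statement that flat semilattices are distributive), and there is real bookkeeping you have not confronted, e.g.\ a non-distributive $\mu$ is only guaranteed to contain $M_3$ or $N_5$ as a \emph{sublattice}, while $-\otimes\mu$ sees only the join structure of $\mu$, not that sublattice's meets. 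Option (b) --- Lazard, i.e.\ the paper's corollaries \ref{ALG_LAZARD} and \ref{ALG_FLAT} plus compactness of finite modules --- proves (iii)$\Rightarrow$(i), not (iii)$\Rightarrow$(iv); with it you obtain (i)$\Leftrightarrow$(ii)$\Leftrightarrow$(iii) together with (iv)$\Rightarrow$(i) and (iv)$\Leftrightarrow$(v), but nothing connects the projective/dualisable/flat cluster back to (iv)--(v). That missing bridge is precisely what the paper's proof of theorem \ref{B_PRIM_THM} supplies: a projective $\mu$ embeds in a free module via its section, hence has primitive decompositions (lemma \ref{B_PRIM_SUB}); projectivity gives a $\B$-linear section of $\B(\Prim\mu,\leq)\twoheadrightarrow\mu$; and the sections-over-primitives lemma \ref{B_PRIM_SECTION} forces any section to carry a primitive $X$ to $(\Prim\mu)_{\leq X}$, whence the section is surjective and the comparison map is an isomorphism. (If you prefer to stay in your lattice language, the cheapest repair is to show that a join-semilattice retract of $\B^n$ is a distributive semilattice --- transport $a\leq b\vee c$ along the section, split it inside $\B^n$ using distributivity there, and retract back --- which gives (i)$\Rightarrow$(iv) directly; but no such argument appears in your write-up.)

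Separately, your closing remark about the definition of primitive has the logic backwards, and this infects your (v)$\Rightarrow$(iv). The paper's primitives are exactly the join-\emph{irreducibles}: conditions \emph{i)} and \emph{ii)} of lemma \ref{B_PRIM_LEMMA} both express that $X=\bigvee_i X_i$ forces $X=X_i$, not join-primality --- in $M_3$ every atom is primitive in the paper's sense yet lies below the join of the other two. What makes (v) strong enough to force distributivity is not any primality built into the definition, but the paper's \emph{antichain} convention for irredundance (no order relations among the $X_i$, rather than minimality; see the Aside following definitions \ref{B_PRIM_DEF}). From uniqueness of antichain decompositions one \emph{derives} that primitives are join-prime: if $p\leq\bigvee C$ for an antichain of primitives $C$ but $p\not\leq c$ for every $c\in C$, then the maximal elements of $C\cup\{p\}$ form a second, distinct antichain decomposition of $\bigvee C$, contradicting uniqueness. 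Only after this step does your map $x\mapsto\{\text{primitives}\leq x\}$ preserve joins; as written, your (v)$\Rightarrow$(iv) presupposes the primality it needs. The distinction is not pedantic: with the usual minimality notion of irredundance the implication would be \emph{false} --- the lattice of convex subsets of a three-element chain has unique minimal join-irreducible decompositions but is not distributive --- and it is precisely the antichain convention that excludes such examples.
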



A careful extension of the same analysis helps us to understand projectivity of modules over the \emph{free} or \emph{fractional ideal} semiring $\B[A;A^+]$ associated to a pair of $\F_1$-algebras (monoids with zero) $(A;A^+)$. We carry out this analysis in \S\ref{MON}. In the case that $A$ is a \emph{domain}, one obtains a complete classification in terms of \emph{partially ordered $(A;A^+)$-modules}. I do not reproduce the classification here, but refer the reader to corollary \ref{MON_DOMAIN_FIRST}.

\

The geometric part \S\ref{POLY} of the paper concerns modules over the semifield $H_\vee=H\sqcup\{-\infty\}$ associated to a totally ordered group $H\subseteq\R$.

\begin{thrm}[\ref{POLY_WEIGHT_COR}]Let $H\subseteq\R$ be an additive subgroup. The category of finite projective $H_\vee$-modules is anti-equivalent to a category of extended $H$-integral general linear weight polyhedra and convex, piecewise-affine maps whose linear parts are fundamental weights.\footnote{This result is closely related to the more specific theorem 1.5 of \cite{crumpy}, which states that every projective $\R_\vee$-submodule is isomorphic to a submodule of $\R_\vee^n$ closed under co-ordinate-wise \emph{minimum}, as well as maximum. By \cite[Thm. B]{Xtracrumpy}, such sets are automatically convex polyhedra, and it is an elementary matter to observe what kinds of supporting half-spaces are allowed. Moreover, \cite{crumpy} even provides criteria to determine when a given submodule of $\R_\vee^n$ is projective.}\end{thrm}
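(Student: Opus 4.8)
The plan is to realise the anti-equivalence as the composite of the dualising functor $(-)^\vee = \Hom_{H_\vee}(-, H_\vee)$ with the monoid-level classification of \S\ref{MON}, and then to re-express the resulting partially ordered modules as weight polyhedra through the convex geometry of $H$. First I would present $H_\vee$ as a free/fractional-ideal semiring $\B[A; A^+]$ in the sense of \S\ref{MON}: take $A$ to be the group $H$ written multiplicatively with a zero $-\infty$ adjoined, and $A^+ = \{h \le 0\} \cup \{-\infty\}$ its subalgebra of integral monomials. Since $H$ is a group, $A$ is a domain, so corollary \ref{MON_DOMAIN_FIRST} applies and identifies finite projective $H_\vee$-modules with a finiteness class of partially ordered $(A; A^+)$-modules. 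This step is essentially bookkeeping: checking that $(A; A^+)$ meets the hypotheses and that projectivity and finiteness transport correctly.

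The substance is to geometrise these partially ordered $(A; A^+)$-modules. Because $H$ is a group it acts freely, so such a module is a disjoint union of $H$-torsors organised by a partial order; choosing a base point in each torsor presents the underlying set inside a coordinate space on which $H$ acts by simultaneous translation, and the order relations become inequalities of the form $x_i - x_j \le c_{ij}$ with $c_{ij} \in H$ — the monomial inequalities promised in the introduction. Finite generation bounds the number of such relations, so the point set is an $H$-integral polyhedron cut out by finitely many half-spaces with root-type normals $e_i - e_j$; these are exactly the supporting half-spaces of an alcoved, and hence $\GL$-weight, polyhedron, and they are precisely the half-spaces made available by closure under coordinatewise maximum and minimum, as in the footnoted result. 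Permitting recession directions and $-\infty$-boundary behaviour accounts for the qualifier \emph{extended}.

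Finally I would match morphisms. An $H_\vee$-linear map sends generators to tropical-monomial combinations of generators, so under $(-)^\vee$ it induces an $H$-equivariant, order-preserving map of the associated polyhedra; the join (max-plus) structure forces this map to be convex and piecewise-affine, with each affine piece the evaluation of a single monomial. Reading off the linear part of such a piece in weight coordinates exhibits it as a fundamental weight of $\GL$, which is the stated constraint on morphisms. Combining the coordinate presentation on objects with duality on morphisms yields the asserted anti-equivalence, the contravariance being supplied by $(-)^\vee$.

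I expect the crux to be the morphism half of this last step: showing that every convex piecewise-affine map with fundamental-weight linear parts is realised by a genuine $H_\vee$-module homomorphism — that is, fullness and essential surjectivity on morphisms — and conversely that module maps impose nothing beyond convexity and the weight condition. Bound up with this is the careful treatment of the \emph{extended} and \emph{$H$-integral} decorations: one must delimit exactly which unbounded polyhedra and which $-\infty$-faces correspond to honest finite projective modules, so that the functor lands in, and surjects onto, the stated category rather than merely being faithful.
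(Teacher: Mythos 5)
Your overall architecture matches the paper's: reduce to partially ordered free $H$-modules via the classification of \S\ref{MON} (corollary \ref{MON_DOMAIN_FIRST}), geometrise via the dual $\V$, then extend $\B$-linearly. But there is a genuine gap at the step you dismiss as ``bookkeeping'' plus coordinates. Defining $\V(M,\leq)=\Hom_{\mathbf{PO}\Mod_H}(M,H)$ certainly gives a polyhedron cut out by root-type inequalities; what you never address is why the partial order on $M$ can be \emph{recovered} from that polyhedron, i.e.\ why the unit $M\rightarrow\Aff_H^\GL(\V(M,\leq))$ is an isomorphism of partially ordered modules. Concretely: whenever $F\not\leq G$ in $M$ you must produce a monotone homomorphism $\phi:M\rightarrow H_\vee$ with $\phi(G)=0$ and $\phi(F)>0$; otherwise the polyhedron could satisfy strictly more monomial inequalities than the presentation of $\leq$ generates, the composite $\Aff_H^\GL\V$ would enlarge the order, and your functor would fail to be an anti-equivalence. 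This separation statement is exactly the paper's lemma \ref{POLY_WEIGHT_MAINLEMMA}, the heart of theorem \ref{POLY_WEIGHT_THM}, and it is not formal: the proof presents $(M,\leq)$ by a quiver (\S\ref{MON_QUIVER}), uses non-degeneracy (automatic here since $H$ is totally ordered) to restrict attention to acyclic paths, and builds $\phi$ by a case analysis on how the cyclic factors of $F$ and $G$ are connected in the quiver. That genuine input is required is shown by example \ref{POLY_FAM_EG}: the precise analogue of this statement \emph{fails} for po-modules over $\Aff(\Delta,H)$ (the rank-two quiver with arrows $X$ and $-X$ misses a relation that holds at every point of the dual family), which is why \S\ref{POLY_FAM} must pass to the normalisation. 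So the fact you are implicitly assuming is false in the nearest neighbouring setting, and its truth over a totally ordered group is precisely what has to be proved.

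Relatedly, you locate the crux in the wrong half of the argument. The morphism matching you worry about --- that $\Hom_{H_\vee}(\bar\Delta_1,\bar\Delta_2)$ is exactly the set of convex piecewise-affine maps with fundamental-weight linear parts --- is, in the paper, the comparatively formal part: once theorem \ref{POLY_WEIGHT_THM} is in hand, one observes that the monomial (affine) maps generate $\Hom_{H_\vee}(\B\Aff_H^\GL(\Delta_2),\B\Aff_H^\GL(\Delta_1))$ as a $\B$-module and transports this through duality for finite projective modules; the extended strata and $-\infty$-behaviour are handled simply by setting $\bar\Delta:=\Hom_{\mathbf{PO}\Mod_H}(\Aff_H^\GL(\Delta),H_\vee)$, i.e.\ allowing values in $H_\vee$ rather than $H$. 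Your proposal would become a proof by inserting the separation lemma at the object level; without it, the remaining steps have nothing to rest on.
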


By duality for finite projectives these categories are also \emph{equivalent} - I have just found it more natural to phrase the result in the form of a duality.

Taking $H=0$ in this result recovers a more geometric formulation of the (only non-trivial) equivalence \emph{i)}$\Leftrightarrow$\emph{iv)} of theorem \ref{B_PRIM_THM}: finite projective $\B$-modules are dual to certain convex cones in the Coxeter complex of $\GL_n$.

\

We cannot immediately extend our classification scheme to the more geometric setting of the semiring of convex, piecewise-affine functions on a polyhedron, since the latter is not actually free on the group of affine functions. Rather, it is the \emph{normalisation} of the free semiring \cite{norm}. Making this replacement helps us to get a geometric classification:

\begin{thrm}[\ref{POLY_FAM_COR}]Let $\Delta$ be an $H$-rational polytope. The category of finite projective $\CPA(\Delta,\Z)$-modules is anti-equivalent to a category of convex families of $\GL$ weight polyhedra over $\Delta$.\end{thrm}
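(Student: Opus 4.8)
The plan is to bootstrap from the classification already obtained for a free semiring, using the remark preceding the statement that $\CPA(\Delta,\Z)$ is not free but is the \emph{normalisation} in the sense of \cite{norm}. I would first make this precise: fix the group $L$ of $\Z$-integral affine functions on $\Delta$ and let $S$ be the free semiring it generates over $H_\vee$, so that there is a canonical surjection $S\twoheadrightarrow\CPA(\Delta,\Z)$ exhibiting the target as the integral closure of the image (every convex piecewise-affine function being a finite maximum of affine ones). Because $L$ is a group, the associated pair of $\F_1$-algebras is a domain, so the results of \S\ref{MON} apply to $S$: by corollary \ref{MON_DOMAIN_FIRST} its finite projective modules are anti-equivalent, via the duality $P\mapsto\Hom_S(P,S)$, to partially ordered $(A;A^+)$-modules of finite type.

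The core of the argument is then a descent statement along the normalisation $S\to\CPA(\Delta,\Z)$. I would show that restriction along this map sends finite projective $\CPA(\Delta,\Z)$-modules to finite projective $S$-modules, and that a finite projective $S$-module descends exactly when its associated partially ordered $L$-module is integrally closed — equivalently, when the monomial inequalities cutting it out are replaced by their convex, integrally-closed hulls. On the dual side this is precisely the operation of passing to the convex envelope of a family of weight data over $\Delta$. This is the step I expect to be the main obstacle: normalisation can alter an idempotent semiring substantially, so one must check that it neither creates nor destroys projectives beyond this controlled convexification, which requires comparing the module structure before and after taking integral closure.

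Having identified the descended data, I would reinterpret it geometrically, working stalkwise over $\Delta$. Since $\CPA(\Delta,\Z)$ is the ring of global sections of a sheaf of idempotent semirings whose stalk at each point is a localisation of an $H_\vee$-type semifield, theorem \ref{POLY_WEIGHT_COR} attaches to a finite projective module a $\GL$ weight polyhedron at each point of $\Delta$, with transition data governed by convex, piecewise-affine maps of fundamental-weight type. The integral-closure constraint of the previous step says exactly that these fibrewise polyhedra assemble into a \emph{convex family} over $\Delta$. I would then verify that this assignment is functorial and arrow-reversing, the anti-equivalence being implemented by the duality $P\mapsto\Hom_{\CPA(\Delta,\Z)}(P,\CPA(\Delta,\Z))$.

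Finally I would produce a quasi-inverse, sending a convex family of weight polyhedra to its module of convex sections, and check that the two composites are naturally isomorphic to the respective identities. The fullness, faithfulness and essential surjectivity of the construction then reduce to the corresponding statements of \ref{POLY_WEIGHT_COR} applied fibrewise, together with the descent comparison; so the only genuinely new work is concentrated in the normalisation step, where the passage from the free semiring to $\CPA(\Delta,\Z)$ must be matched precisely with convexification of the dual weight data.
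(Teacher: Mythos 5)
There is a genuine gap, and it sits exactly where you predicted the main obstacle would be — but your proposed mechanism for crossing it is wrong in direction. Restriction of scalars along the normalisation $S=\B[A]\rightarrow\CPA(\Delta,\Z)$ does \emph{not} send finite projective $\CPA(\Delta,\Z)$-modules to finite projective $S$-modules: $\CPA(\Delta,\Z)$ itself, free of rank one over itself, is not projective over $\B[A]$. Indeed, by lemma \ref{MON_PRIM_THM} a projective $\B[A]$-module would have to be free on the po-module of its primitives; the primitives of $\CPA(\Delta,\Z)$ are the affine functions $A$ with the pointwise order, and the comparison map $\B(A,\leq)\rightarrow\CPA(\Delta,\Z)$ fails to be injective — by example \ref{POLY_FAM_EG}, on $\Delta=[a,b]$ with $a<0<b$ the finitely generated lower sets spanned by $\{X,-X\}$ and by $\{X,-X,0\}$ are distinct, yet both map to the convex function $X\vee-X$. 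So you cannot start from a projective $\CPA$-module and land in the classification of corollary \ref{MON_DOMAIN_FIRST} by restriction; the paper's comparison goes the opposite way (every projective $\CPA$-module is exhibited as the base change ${}^\nu\B(M,\leq)=\CPA\oplus_{\B[A]}\B(M,\leq)$ of a projective $\B[A]$-module), and this is proved not by a descent formalism but by re-running the whole of \S\ref{MON} over the normal semiring.

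What makes that re-run possible — and what is absent from your sketch — are two concrete ingredients. First, every argument of \S\ref{MON} (the monotone section lemma \ref{B_POSET_SECTION}, the direct-sum lemma, freeness of cyclic factors) rests on uniqueness of minimal generating sets of fractional submodules, which over the normalisation must be replaced by unicity of minimal generators of \emph{integrally closed} (convex) fractional submodules; this is lemma \ref{POLY_FAM_CRUX}, and it needs a genuine arithmetic argument, not formal descent. Second, the fibrewise application of theorem \ref{POLY_WEIGHT_COR} that you invoke to build the weight-polyhedron family is precisely what breaks before normalisation: lemma \ref{POLY_WEIGHT_MAINLEMMA} is false over $A=\Aff(\Delta,H)$ (example \ref{POLY_FAM_EG} again), and its corrected form, lemma \ref{POLY_FAM_MAINLEMMA}, requires both the hypothesis that the order on $M$ is the one induced from ${}^\nu\B(M,\leq)$ and the representation of the normalisation by actual convex piecewise-affine functions, so that one can produce a point $q\in\Delta(\Q H)$ witnessing the failure of a given inequality. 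Your plan correctly isolates normalisation-versus-convexification as the crux, but as written the first step is false and the remaining steps do not go through without these two lemmas.
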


For a more precise description of the latter category and the duality, see \S\ref{POLY_FAM}.

\subsection*{Acknowledgement}
I'd like to thank Jeff Giansiracusa, a conversation with whom gave me the idea for this paper.

\section{Preliminaries}\label{ALG}

\subsection{Points}\label{ALG_PTS}
It will be convenient to switch between the categories of pointed and unpointed sets. We do this using the strongly monoidal functor
\[ -\tens\F_1:(\mathbf{Set},\times)\rightarrow (\mathbf{Set}_*,\tens_{\F_1}=\wedge)\]
that adjoins a disjoint base point. This exhibits the category $\mathbf{Set}_*$ of pointed sets as the universal way to attach a zero object to $\mathbf{Set}$. 

This functor extends to the pointed versions of all essentially algebraic theories in $\mathbf{Set}$. In particular, a monoid $Q$ can be replaced by a monoid with zero $\F_1[Q]$, and a partially ordered set by a pointed partially ordered set (the point is the minimum). These constructions have the same universal property: $\mathbf{POSet}_*$ (resp. $\Mod_{\F_1[Q]}$) is the universal pointed extension of $\mathbf{POSet}$ (resp. $\Mod_Q$).

We will adhere to the convention of writing monoids without zero (mainly appearing in \S\ref{POLY}) and idempotent semirings \emph{additively}, and pure $\F_1$-algebras (\S\ref{MON}) multiplicatively. If $(Q;Q^+)$ is a pair of unpointed monoids, we will write $\F_1[Q]$ as a shorthand for the associated $\F_1$-algebra pair. When $H\subseteq\R$ is an additive subgroup, $\F_1[H]$ denotes the $\F_1$-algebra pair that in \cite{rig2} was (more suggestively) labelled $\F_1(\!(t^{-H})\!)$.

\subsection{Projectives}
Let $A$ be a commutative monoid (with or without zero) or semiring. (In fact, the following definition is standard for any commutative algebraic monad in the sense of \cite{Durov}.)
\begin{defn}An $A$-module is \emph{projective} if it satisfies the equivalent conditions
\begin{enumerate}
\item $\Hom(P,-)$ is exact;
\item any surjection $F\twoheadrightarrow P$ has a section;
\item $P$ is a retract of a free module.
\end{enumerate}\end{defn}

The situation when $A$ is a monoid - with or without zero - is very simple. A free $A$-module splits \emph{uniquely} as a sum of cyclic factors
\[ M\cong \bigoplus_i A_i \]
with each $A_i\simeq A$ (where the coproduct $\oplus$ is disjoint union in the unpointed case and wedge sum when $0\in A$). The \emph{factor} of a non-zero element $x\in M$ is the index of the cyclic submodule to which it belongs. This splitting, and in particular, its index set, is natural in $M$. In other words, every matrix over $A$ is a product of a permutation and a diagonal matrix. It follows:

\begin{prop}\label{ALG_CYCLIC}Let $A$ be a commutative monoid. Every projective $A$-module splits uniquely as a coproduct of cyclic submodules, each isomorphic to the image of an idempotent in $A$.\end{prop}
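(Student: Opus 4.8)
The plan is to realise any projective $P$ as the image of an idempotent endomorphism of a free module and to read off the decomposition directly from that endomorphism. First I would choose a presentation of $P$ as a retract of a free module: a free $F\cong\bigoplus_{i\in I}A_i$ together with maps $\iota\colon P\to F$ and $r\colon F\to P$ satisfying $r\circ\iota=\id_P$. Setting $\pi=\iota\circ r\in\End(F)$, this is idempotent; since $\pi$ is idempotent its image coincides with its set of fixed points, $\pi(F)=\{x\in F:\pi(x)=x\}$, and $\iota$ restricts to an isomorphism $P\xrightarrow{\ \sim\ }\pi(F)$. So it suffices to decompose $\pi(F)$.

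Next I would invoke the structure recalled above: because the splitting of a free module into cyclic factors is natural, the homomorphism $\pi$ carries each factor into a single factor. Concretely, writing $u_i$ for the generator of $A_i$, there is a function $\alpha\colon I\to I$ and elements $c_i\in A$ with $\pi(u_i)=c_i\,u_{\alpha(i)}$, hence $\pi(a\,u_i)=a\,c_i\,u_{\alpha(i)}$ for every $a\in A$. Imposing $\pi^2=\pi$ on each generator and comparing factors (recall each element of $F$ lies in a single factor) forces $\alpha^2=\alpha$ and $c_i c_{\alpha(i)}=c_i$ (in the pointed case, on the support where $c_i\neq0$). Thus $\alpha$ is a retraction of $I$ onto $J:=\alpha(I)$, and for $k\in J$ the element $c_k$ is idempotent in $A$.

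The real content is then the computation of the fixed points factor by factor. An element $a\,u_k$ of $A_k$ is fixed exactly when $k\in J$ and $a\,c_k=a$, that is, exactly when $a\in A c_k$; so the fixed points lying in $A_k$ form the cyclic submodule $A c_k\,u_k$, isomorphic to $\mathrm{im}(c_k)$. Here lies the main obstacle: the possible non-injectivity of $\alpha$ could a priori spread $\pi(F)$ across a factor $A_k$ as a union of several cyclic pieces $A c_i\,u_k$ with $\alpha(i)=k$. But the idempotency relation $c_i c_{\alpha(i)}=c_i$ gives $c_i\in A c_k$, so all these pieces are absorbed into the single cyclic module $A c_k\,u_k$. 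Collecting over $k$ yields
\[P\cong\pi(F)\cong\bigoplus_{k\in J}\mathrm{im}(c_k),\]
a coproduct of cyclic submodules each isomorphic to the image of an idempotent of $A$, as required.

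Finally, for uniqueness I would exhibit the decomposition as intrinsic. Declare two nonzero elements of a module adjacent if one is an $A$-multiple of the other, and pass to the equivalence relation this generates. In any cyclic submodule that is the image of an idempotent, every element is adjacent to the generator, so such a submodule is a single equivalence class and is indecomposable; conversely, in a coproduct decomposition into cyclics no adjacency can cross between factors, since multiplication preserves factors. Hence the equivalence classes of this relation are exactly the cyclic factors, independently of the chosen decomposition, which gives uniqueness. The substantive step is the image computation of the third paragraph; the remainder is bookkeeping with the recalled naturality of the free splitting.
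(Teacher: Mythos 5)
Your proof is correct and follows essentially the same route as the paper: the paper derives the proposition directly from the naturality of the cyclic splitting of free modules (``every matrix over $A$ is a product of a permutation and a diagonal matrix''), which is exactly your observation that the idempotent $\pi=\iota\circ r$ sends each factor $A_i$ into a single factor $A_{\alpha(i)}$, after which one reads off the fixed-point set factor by factor. You have merely made explicit the computation (idempotency of $\alpha$ and of the $c_k$, absorption of the pieces $Ac_iu_k$ into $Ac_ku_k$, and the intrinsic characterisation of the factors giving uniqueness) that the paper compresses into ``It follows.''
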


Let us call a monoid (resp. monoid with zero $A$) a \emph{domain} if it is cancellative (resp. $A\setminus 0$ is a cancellative submonoid).

\begin{cor}If $A$ is a domain, then every projective $A$-module is free.\end{cor}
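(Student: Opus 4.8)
The plan is to combine Proposition~\ref{ALG_CYCLIC} with the observation that over a domain every idempotent is trivial. By the proposition, any projective $A$-module $P$ splits as a coproduct $P\cong\bigoplus_i A_i$ where each cyclic summand $A_i$ is the image of an idempotent $e_i\in A$. It therefore suffices to show that the only idempotents in a domain are the identity $1$ (and, in the pointed case, the zero $0$), for then each nonzero summand is isomorphic to $A$ itself and $P$ is free.

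First I would verify the claim about idempotents. Suppose $e\in A$ satisfies $e^2=e$ (writing $A$ multiplicatively as for $\F_1$-algebras). If $e\neq 0$, then since $A\setminus 0$ is cancellative, I can cancel $e$ from the equation $e\cdot e=e\cdot 1$ to conclude $e=1$. Hence the only idempotents are $0$ and $1$, so the image of any idempotent is either the zero module or a copy of $A$. Discarding the zero summands, the splitting of Proposition~\ref{ALG_CYCLIC} exhibits $P$ as a coproduct of copies of $A$, i.e.\ as a free module.

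The only subtlety to address carefully is the bookkeeping between the pointed and unpointed cases, and in the unpointed case the interpretation of ``the image of an idempotent'': for a monoid without zero the image of the idempotent $e$ under multiplication is the submodule $eA$, and the same cancellation argument applies to show $eA=A$ whenever $e$ is not an absorbing element. This is genuinely routine once the idempotent computation is in place, so I expect no real obstacle; the entire content of the corollary is the one-line cancellation argument, with Proposition~\ref{ALG_CYCLIC} doing all the structural heavy lifting. The step most worth stating explicitly is simply that cancellativity forces idempotents to be trivial.
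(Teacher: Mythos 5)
Your proof is correct and matches the paper's intent exactly: the paper states this corollary without proof, precisely because it follows from Proposition \ref{ALG_CYCLIC} once one observes that cancellativity of $A\setminus 0$ forces every nonzero idempotent to equal $1$, which is the one-line computation you give. Your handling of the pointed/unpointed bookkeeping is also the right (and only) point of care.
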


\subsection{Flatness and duality over semirings}
We now restrict attention to the case that $A$ is a semiring, whence $\Mod_A$ is semiadditive (i.e.\ finite coproducts are products).

\begin{defns}Let $A$ be a (not necessarily idempotent) semiring, $M$ an $A$-module. $M$ is said to be:
\begin{enumerate}
\item \emph{dualisable} if there exists a module $M^\vee$ such that $-\tens_AM^\vee$ is adjoint to $-\tens_AM$;
\item \emph{flat} if $-\tens_AM$ is exact.
\end{enumerate}

The \emph{linear dual} of a module is the object
\[ M^\vee:=\Hom_A(M,A); \]
a module is said to be \emph{reflexive} if $M\tilde\rightarrow (M^\vee)^\vee$. This is strictly weaker than being dualisable, for while we always have the evaluation map $M\tens_A M^\vee\rightarrow A$, there may be no `identity matrix' $A\rightarrow M^\vee\tens_AM$.\end{defns}

\begin{remark}Flatness, at least with this definition, does not make much sense when $A$ is a monoid (more generally, when $\Mod_A$ is not semiadditive), because the class of flat modules is not closed under finite coproducts. In particular, free modules on more than one generator are never flat.
Some authors \cite{bougie} have studied variants of the notion of flatness adapted to $A$-modules (or `$A$-acts') in the unpointed case.\end{remark}

The relations between these properties are, much as in the case of commutative rings, as follows:
\begin{itemize}\item Any coproduct, filtered colimit, or retract of a flat module is flat. In particular, projective modules are flat.
\item Any finite coproduct or retract of a dualisable module is dualisable. In particular, finitely generated projective modules are dualisable.
\item Dualisable modules are flat, since in this case $-\tens_A M^\vee$ is left adjoint to $-\tens_AM$.
\item Conversely, finitely \emph{presented} flat modules are projective (corollary \ref{ALG_FLAT}).\end{itemize}

The following fact is no doubt well-known - indeed, the proof for the case of rings \cite[\href{http://stacks.math.columbia.edu/tag/00HK}{00HK}]{stacksproject} carries through with only minor modifications.

\begin{prop}[Equational criterion for flatness]\label{ALG_EQN}Let $M$ be a flat module, $v:A^n\rightarrow M$ a homomorphism from a finite free module. Suppose that $v$ satisfies a relation $f\sim g$:
\[ A \underset{g}{\stackrel{f}{\rightrightarrows}} A^n \stackrel{v}{\rightarrow} M. \]
Then $v$ factors through a finite free module in which $f\sim g$.\end{prop}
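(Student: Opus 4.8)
The plan is to reduce the relation to the statement that a single element of $A^n\tens_A M$ lies in an equalizer, and then to apply flatness in the form ``$-\tens_A M$ preserves finite limits'' to lift that element along a map of finite free modules. Concretely, associate to $v$ the element
\[ x \;=\; \sum_{i=1}^n e_i\tens v(e_i)\;\in\; A^n\tens_A M\;\cong\; M^n, \]
and encode the two arrows $f,g\colon A\to A^n$ as the ``covectors'' $\lambda,\mu\colon A^n\to A$ whose values on the standard basis are the coordinates of $f(1)$ and $g(1)$. Tensoring with $M$ turns these into the two evaluation maps $\lambda\tens M,\ \mu\tens M\colon M^n\rightrightarrows M$ sending $(m_i)_i$ to $\sum_i f_i m_i$ and to $\sum_i g_i m_i$. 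The hypothesis $vf=vg$ is then precisely the assertion that $x$ is equalised by this pair.

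Next I would form the equalizer $\iota\colon E\hookrightarrow A^n$ of $\lambda,\mu$ in $\Mod_A$, i.e.\ the submodule of $A^n$ on which $\lambda$ and $\mu$ agree; this is the semiring surrogate for the module of syzygies used in the classical ring argument. Since $M$ is flat, $-\tens_A M$ is exact and in particular carries this equalizer to the equalizer of $\lambda\tens M,\mu\tens M$, so that $E\tens_A M\to M^n$ is exactly the inclusion of the equalised elements of $M^n$. As $x$ is equalised, it lifts to some $\xi\in E\tens_A M$. Writing $\xi$ as a finite sum of simple tensors $\xi=\sum_{j=1}^m u_j\tens y_j$ with $u_j\in E$ and $y_j\in M$, I read off the factorization: set $F=A^m$, define $\phi\colon A^n\to F$ by the matrix whose $j$-th row is $u_j$, and $w\colon F\to M$ by $w(\epsilon_j)=y_j$. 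Then $w\phi=v$, because $v(e_i)=\sum_j (u_j)_i\,y_j$ by construction of $\xi$, while $\phi f=\phi g$, because each $u_j\in E$ gives $\sum_i f_i (u_j)_i=\sum_i g_i(u_j)_i$; thus the relation $f\sim g$ holds in the finite free module $F$.

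The point to get right is the single invocation of flatness in the middle step: I must ensure that the paper's notion of exactness for $-\tens_A M$ genuinely forces preservation of the equalizer $E$, and that equalizers in $\Mod_A$ are computed as honest submodules, so that ``$x$ is equalised'' literally means ``$x$ lies in the image of $E\tens_A M$''. Both hold because $\Mod_A$ is monadic over (pointed) sets, whence the forgetful functor creates equalizers, and a left adjoint that is exact preserves them. The remaining bookkeeping is routine — every element of a tensor product of $A$-modules is a finite sum of simple tensors, which is exactly what makes $F$ finite free and yields the two identities by comparing coordinates. Note that, in contrast with the ring-theoretic proof, no second application of flatness (to an ideal) is required: working with the equalizer in place of a kernel absorbs both steps at once.
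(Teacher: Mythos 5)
Your proposal is correct and follows essentially the same route as the paper's proof: dualise $f,g$ to a pair $A^n\rightrightarrows A$, use flatness to see that the equaliser $K\tens_A M\rightarrow M^n\rightrightarrows M$ captures the element of $M^n$ determined by $v$, and then expand it as a finite sum of simple tensors whose transpose gives the factorisation. Your write-up merely makes explicit the coordinate bookkeeping that the paper leaves implicit.
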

\begin{proof}
Dualising the relation, we obtain an equaliser sequence
\[ K\rightarrow A^n\rightrightarrows A, \]
and hence by flatness of $M$, an equaliser
\[ K\tens M \rightarrow M^n\rightrightarrows M. \]
It follows that $v\in M^n$ actually lies in $K\tens M$.

By writing $v$ as a sum of decomposable elements, one obtains morphisms $A^m\rightarrow K\subseteq A^n$ and $A^m\rightarrow M$ that induce the composite
\[ A^m \stackrel{\Sigma}{\rightarrow} A \stackrel{v}{\rightarrow} K\tens M \]
on the tensor product. The transpose of $A^m\rightarrow A^n$ is the desired factorisation.
\end{proof}
\begin{cor}Every homomorphism from a finitely presented module into a flat module factors through a finite free module.\end{cor}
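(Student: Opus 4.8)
The plan is to deduce this corollary from the equational criterion for flatness (Proposition~\ref{ALG_EQN}) by an inductive bootstrapping over the finitely many relations in a presentation. Let $N$ be finitely presented and $N\rightarrow M$ a homomorphism into a flat module $M$. Choose a finite presentation, i.e.\ a coequaliser
\[ A^k \rightrightarrows A^n \twoheadrightarrow N, \]
so that $N$ is the quotient of the finite free module $A^n$ by the finitely many relations $f_1\sim g_1,\dots,f_k\sim g_k$ encoded by the two maps $A^k\rightrightarrows A^n$. Composing the surjection with our map yields a homomorphism $v:A^n\rightarrow M$ which, because it factors through $N$, satisfies every one of these $k$ relations.

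Next I would apply the equational criterion one relation at a time. For the first relation $f_1\sim g_1$, Proposition~\ref{ALG_EQN} produces a factorisation $A^n\rightarrow F_1\rightarrow M$ through a finite free module $F_1$ in which $f_1\sim g_1$ already holds, say via $A^{n_1}=F_1$. The key point is that the remaining relations $f_2\sim g_2,\dots,f_k\sim g_k$, which held for $v$, are inherited by the induced map $F_1\rightarrow M$ once we push them forward along $A^n\rightarrow F_1$: the composite $A\rightrightarrows A^n\rightarrow F_1\rightarrow M$ still equalises, so the criterion applies again to $F_1\rightarrow M$ and the relation $f_2\sim g_2$. Iterating through all $k$ relations yields a tower
\[ A^n\rightarrow F_1\rightarrow F_2\rightarrow\cdots\rightarrow F_k\rightarrow M \]
of finite free modules in which, at the final stage $F_k$, all $k$ relations of the presentation of $N$ are satisfied. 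The map $A^n\rightarrow F_k$ therefore factors through the coequaliser $N$, giving $N\rightarrow F_k\rightarrow M$ and exhibiting the desired factorisation through a finite free module.

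The main obstacle I anticipate is bookkeeping the propagation of relations through the tower: when I apply the criterion to resolve $f_i\sim g_i$, I must check that the images in $F_i$ of the earlier, already-satisfied relations remain satisfied (so that I do not undo previous progress) and that the not-yet-treated relations are genuinely transported to valid relations on $F_i$ that still hold after composing to $M$. Both follow because each factorisation $F_{i-1}\rightarrow F_i$ is compatible with the maps down to $M$, so any relation equalised by $F_{i-1}\rightarrow M$ is equalised by the composite through $F_i$; but making this precise requires tracking the relations as honest pairs of maps $A\rightrightarrows F_i$ and verifying functoriality of the construction at each step. Once this is set up cleanly, the induction closes without difficulty, since finiteness of the presentation guarantees the process terminates after $k$ steps.
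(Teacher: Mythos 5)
Your proof is correct and is exactly the argument the paper intends: the corollary is stated as an immediate consequence of Proposition~\ref{ALG_EQN}, proved by inducting over the finitely many relations of a presentation precisely as in the Stacks Project reference the paper cites. The bookkeeping obstacle you flag dissolves more easily than you suggest: once $f_j$ and $g_j$ have \emph{equal} images in $F_{i-1}$, any further homomorphism $F_{i-1}\rightarrow F_i$ preserves that equality on the nose, so earlier relations can never be undone, and all $k$ relations hold in $F_k$, allowing $A^n\rightarrow F_k$ to factor through the coequaliser $N$.
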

\begin{cor}\label{ALG_FLAT}Every finitely presented flat module is projective.\end{cor}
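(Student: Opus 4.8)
The plan is to read the result off immediately from the preceding corollary, applied to the identity endomorphism of $M$. Recall that one of the equivalent conditions defining projectivity is that the module be a retract of a free module; so to show that a finitely presented flat $M$ is projective, it suffices to exhibit $M$ as such a retract.

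Concretely, I would apply the preceding corollary---every homomorphism from a finitely presented module into a flat module factors through a finite free module---to the identity map $\id_M\colon M\to M$. This is legitimate because the single module $M$ is simultaneously finitely presented, so that it may serve as the source, and flat, so that it may serve as the target. The corollary then produces a factorisation
\[ M \stackrel{s}{\rightarrow} A^N \stackrel{p}{\rightarrow} M \]
of $\id_M$ through some finite free module $A^N$. The identity $p\circ s=\id_M$ says precisely that $M$ is a retract of $A^N$, hence projective.

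The main obstacle is, in truth, already behind us. All of the real content sits in the equational criterion for flatness (Proposition \ref{ALG_EQN}) and the first corollary extracted from it, which together guarantee that every map out of a finitely presented module into a flat one factors through a finite free module. Given that machinery, the only observation needed here is that a finitely presented flat module can be fed into the corollary as both source and target, via its own identity, and that ``factors through a free module'' then upgrades tautologically to ``is a retract of a free module''. Should one wish to bypass the preceding corollary, one could instead feed a finite presentation $A^m\rightrightarrows A^n\twoheadrightarrow M$ of $M$ directly into the equational criterion---lifting the generators of $M$ along $A^n\twoheadrightarrow M$ and using flatness to split off the defining relations---but this merely re-derives the corollary inline and yields no genuine simplification.
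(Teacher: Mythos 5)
Your proposal is correct and is exactly the argument the paper intends: corollary \ref{ALG_FLAT} is deduced from the preceding corollary by applying it to $\id_M$, with $M$ serving as both the finitely presented source and the flat target, so that the resulting factorisation through a finite free module exhibits $M$ as a retract of a free module, hence projective. No gap, and no divergence from the paper's route.
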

\begin{cor}[Lazard's theorem]\label{ALG_LAZARD}A module is flat if and only if it is a filtered colimit of finite free modules.\end{cor}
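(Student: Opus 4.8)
The plan is to prove the two implications separately, with essentially all the work concentrated in the `only if' direction. For the easy direction, suppose $M=\colim_i F_i$ is a filtered colimit of finite free modules. Over a semiring every free module is projective and hence flat, and by the first bullet above flatness is preserved under filtered colimits; so $M$ is flat.

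For the converse, suppose $M$ is flat. I would form the category $\mathcal{C}_M$ whose objects are pairs $(F,x)$ with $F$ a finite free module and $x\colon F\to M$ a homomorphism, and whose morphisms $(F,x)\to(F',x')$ are the maps $\phi\colon F\to F'$ with $x'\phi=x$. The tautological cocone exhibits $M$ as $\colim_{\mathcal{C}_M}F$; this canonical presentation of a module by the finite free modules mapping into it holds for \emph{any} $M$, flat or not, since finite free modules generate. It therefore remains only to check that $\mathcal{C}_M$ is filtered, for then this is a filtered colimit of the required shape.

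Filteredness amounts to three conditions. The category is nonempty, containing $(0,0)$. Any two objects $(F,x)$, $(F',x')$ admit a common cocone: the coproduct $F\oplus F'$ is again finite free and carries the map induced by $x$ and $x'$, receiving both objects. The crux is the coequalizing condition: given parallel morphisms $u,v\colon(F,x)\rightrightarrows(F',x')$, I must produce $w\colon(F',x')\to(G,z)$ with $wu=wv$. Here is where flatness enters. Form the coequalizer $q\colon F'\to P$ of $u$ and $v$ in $\Mod_A$; since it imposes finitely many relations on a finite free module, $P$ is finitely presented. As $x'u=x=x'v$, the map $x'$ descends to $\bar x\colon P\to M$ with $\bar x q=x'$. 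Because $P$ is finitely presented and $M$ is flat, the factorisation corollary following proposition \ref{ALG_EQN} factors $\bar x$ through a finite free module, $\bar x=zr$ with $z\colon G\to M$. Then $w:=rq$ satisfies $wu=rqu=rqv=wv$ and $zw=\bar x q=x'$, so $w$ is the sought morphism of $\mathcal{C}_M$.

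The main obstacle is precisely this last step, and it rests on the factorisation corollary, itself a consequence of the equational criterion \ref{ALG_EQN}. Two points demand care in the semiring setting: first, that coequalizers are computed as quotients by \emph{congruences} rather than by submodules, but nonetheless remain finitely presented; and second, that coequalizing a map out of $A^m$ amounts to imposing finitely many relations at once, which is legitimate either by iterating the single-relation criterion \ref{ALG_EQN} or, as above, by invoking the finitely presented factorisation directly.
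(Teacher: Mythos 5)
Your proof is correct and follows exactly the route the paper intends: the statement is given there as an unproved corollary of the equational criterion \ref{ALG_EQN} and its factorisation corollary, and your argument --- exhibiting $M$ as the colimit over the comma category of finite free modules mapping to it, then using the factorisation corollary to verify the coequalizing condition for filteredness --- is precisely that standard derivation (the one in the ring-case reference the paper cites as carrying over with minor modifications). Your attention to the two semiring-specific points, that the coequalizer $P$ is a quotient by a finitely generated congruence and hence finitely presented, is exactly the ``minor modification'' required.
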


\section{$\B$-modules}\label{B}
For the rest of the document, all semirings will be additively idempotent. I continue with the convention of \cite{mac} in denoting idempotent semirings and their modules by lowercase Greek letters, and their operations by $\vee$ (`max') and $+$. Correspondingly, the closed monoidal structure on the category $\Mod_\alpha$ of modules over an idempotent semiring $\alpha$ is denoted $\oplus_\alpha$ (i.e.\ this does \emph{not} denote the coproduct of modules).

As in the introduction, we will denote by $\B$ the \emph{Boolean semifield}, the initial object in the category of idempotent semirings. It will be illuminating to understand $\B$ as a monad via its free functor
\[ \B:\mathbf{Set}\rightarrow\mathbf{Set} \]
that takes a set $S$ to the set $\B S$ of its finite subsets, with operations
\[ 1_\mathbf{Set} \stackrel{\{-\}}{\longrightarrow}\B  \stackrel{\bigcup}{\longleftarrow}\B\B \]
given by singleton and union, respectively. This monad is the unique algebraic (i.e. commuting with filtered colimits) extension of the monad of power set and union on the category of finite sets. A finite, free $\B$-module is nothing more than the power set of a finite set.

The same statements remain valid, \emph{mutatis mutandi}, with $\mathbf{Set}$ replaced by the category $\mathbf{Set}_*$ of pointed sets. The free $\B$-module functor factorises
\[ \mathbf{Set}\rightarrow\mathbf{Set}_*\rightarrow\Mod_\B.\]
We will use this in \S\ref{POLY} to apply the results of \S\ref{MON}, couched in the setting of monoids with zero, in the unpointed regime.

More concretely, a $\B$-module $(\mu,\vee)$ is nothing more than a \emph{join semilattice}, that is, a partially ordered set with finite joins, and a $\B$-linear morphism is a right exact monotone map. In \cite{mac}, these were called `spans'.

The following are equivalent for a $\B$-module $\mu$:
\begin{enumerate}\item $\mu$ is finite as an object of $\Mod_\B$;
\item $\mu$ is compact as an object of $\Mod_\B$;
\item $\mu$ is a finite set.\end{enumerate}


\subsection{Free module on a poset}
Let us denote by $\mathbf{POSet}$ the category of partially ordered sets and monotone maps. Since every $\B$-module is in particular a poset, we have a faithful, conservative functor
\[ \Mod_\B\rightarrow\mathbf{POSet}. \]
In fact, this functor is \emph{monadic}; its left adjoint takes a poset $(S,\leq)$ to the poset $\B(S,\leq)$ of finitely generated lower subsets. Since any union of lower subsets is lower, union makes this poset a $\B$-submodule of the power set $\sh P(S)$. It is called the \emph{free $\B$-module} on $(S,\leq)$.

By general principles, $\B:\mathbf{POSet}\rightarrow\Mod_\B$ is strongly monoidal. It also respects Hom sets in the following way: if $S_1,S_2$ are posets, then the assignment
\[ (X,Y):Z\mapsto \left\{\begin{array}{ll} Y & \text{if }X\leq Z \\ -\infty & \text{otherwise}\end{array} \right. \]
defines a monotone map $S_1^\mathrm{op}\times S_2 \rightarrow \Hom(\B S_1,\B S_2)$; if $S_1$ is finite, then this map extends to an isomorphism 
\[ \B(S_1^\mathrm{op}\times S_2) \cong \Hom_\B(\B S_1,\B S_2) \]
so that maps $\B S_1\rightarrow\B S_2$ are finite monotone correspondences from $S_1$ to $S_2$.

The same logic holds over the category $\mathbf{POSet}_*$ of \emph{pointed} posets, that is, posets equipped with a distinguished minimum and monotone maps that preserve this minimum.

\paragraph{Flatness and projectivity}
The free module on a poset comes equipped with a natural set of generators
\[ \B S\twoheadrightarrow \B(S,\leq). \]
To understand projectivity, we will need to know when this map admits a section. A natural candidate for a splitting is the (right ind-adjoint) inclusion $\B(S,\leq)\hookrightarrow\sh P(S)$. As remarked above, it is automatically a $\B$-module homomorphism. This map factors through $\B S$ - thus defining an honest adjoint - if and only if $(S,\leq)$ satisfies the condition
\begin{itemize}\item any principal lower set $S_{\leq X}$ is finite.\hfill\emph{lower finite}\end{itemize} 
This has the flavour of a `finite presentation' condition for posets: it is equivalent that the quotient $\B S\rightarrow\B (S,\leq)$ be defined by finitely many relations. It is satisfied, in particular, whenever $S$ is finite.

It turns out that this is the only way to split this epimorphism:

\begin{lemma}[Monotone sections]\label{B_POSET_SECTION}Let $p:(S_1,\leq)\rightarrow(S_2,\leq)$ be a bijective monotone map, $\sigma:\B(S_2,\leq)\rightarrow\B(S_1,\leq)$ a monotone section of its $\B$-linear extension. Then $\sigma$ is right adjoint to $\B p$ and $\B$-linear.\end{lemma}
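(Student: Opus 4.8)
The plan is to identify $\sigma$ explicitly as the preimage operator $M\mapsto p^{-1}(M)$; once this is done, both assertions fall out formally. I first unwind the maps on generators. Recall that $\B p$ carries a finitely generated lower set $L\subseteq S_1$ to the lower set generated by its image $p(L)$, so that on principal lower sets $\B p(\downarrow\! x)=\downarrow\! p(x)$, and that these principal sets generate $\B(S_1,\leq)$ under union. Since $\sigma$ is a section of $\B p$, I have $\downarrow\! p(\sigma(M))=\B p(\sigma(M))=M$ for every $M$; in particular $p(\sigma(M))\subseteq M$, which records the easy inclusion $\sigma(M)\subseteq p^{-1}(M)$. (Note $p^{-1}(M)$ is itself a lower set, because $p$ is monotone.)

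The crux is the reverse inclusion $p^{-1}(M)\subseteq\sigma(M)$, and this is the single step where bijectivity of $p$ is indispensable. I would first treat principal generators, showing $x\in\sigma(\downarrow\! p(x))$ for each $x\in S_1$. Writing $s=p(x)$, the section identity gives $\downarrow\! p(\sigma(\downarrow\! s))=\downarrow\! s$, so $s\leq p(z)$ for some $z\in\sigma(\downarrow\! s)$; but the easy inclusion forces $p(z)\leq s$ as well, whence $p(z)=s=p(x)$, and injectivity yields $z=x$, so indeed $x\in\sigma(\downarrow\! s)$. For a general $M$ and any $x\in p^{-1}(M)$, the element $s=p(x)$ lies in $M$, so $\downarrow\! s\subseteq M$ and monotonicity of $\sigma$ upgrades $x\in\sigma(\downarrow\! s)$ to $x\in\sigma(M)$. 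Combining the two inclusions gives $\sigma(M)=p^{-1}(M)$; as a byproduct $p^{-1}(M)$ is automatically finitely generated, since it is a value of $\sigma$.

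With $\sigma=p^{-1}$ in hand the two conclusions are immediate. $\B$-linearity holds because the preimage operator commutes with unions and sends the empty lower set to the empty lower set, i.e.\ it preserves $\vee$ and $-\infty$. For the adjunction $\B p\dashv\sigma$ I invoke the standard image--preimage adjunction: for $L\in\B(S_1,\leq)$ and $M\in\B(S_2,\leq)$ one has $\B p(L)\subseteq M$ iff $p(L)\subseteq M$ iff $L\subseteq p^{-1}(M)=\sigma(M)$, where the first equivalence uses that $M$ is a lower set to pass between $p(L)$ and the lower set it generates.

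The only genuine obstacle is the key step of the second paragraph; everything else is bookkeeping about lower sets. It is worth emphasising where the hypotheses are spent: injectivity of $p$ is exactly what pins $\sigma$ down on the principal generators—without it, a monotone section could carry $\downarrow\! s$ to a strictly smaller lower set than $p^{-1}(\downarrow\! s)$—while monotonicity of $\sigma$ is precisely what propagates the identity $\sigma=p^{-1}$ from the generators to all of $\B(S_2,\leq)$.
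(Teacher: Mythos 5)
Your proof is correct and follows essentially the same route as the paper's: in both, the key step is that the section identity together with injectivity of $p$ forces $p^{-1}(X)\in\sigma(\downarrow X)$ on principal lower sets (via uniqueness of the minimal generating set of a lower set), after which monotonicity of $\sigma$ propagates this to all of $\B(S_2,\leq)$ and yields the adjunction. Your explicit identification $\sigma=p^{-1}$ is just a more concrete packaging, which has the minor virtue of making the $\B$-linearity conclusion transparent where the paper leaves it implicit.
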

\begin{proof}Let $\sigma:\B(S_2,\leq)\rightarrow \B(S_1,\leq)$ be any section, $S_0\in\B(S_2,\leq)$. Then $p\sigma S_0\subseteq S_0$ generates $S_0$ as a lower set. In particular, for any $X\in S_2$, $X\in p\sigma X$, i.e. $p^{-1}X\in\sigma X$.

If $\sigma$ is monotone, then this shows that $\sigma p$ is increasing on $\B(S_1,\leq)$. Thus $Y\leq\sigma X$ if and only if $pY\leq X$, that is, $\sigma$ is right adjoint to $p$.\end{proof}

The proof of the lemma \ref{B_POSET_SECTION} on monotone sections depends on the fact that a lower set has a unique minimal set of generators. We will use variations of the latter fact, and the lemma, repeatedly in the sequel.

\begin{prop}\label{B_POSET_PROJ}Let $(S,\leq)$ be a poset. The following are equivalent:
\begin{enumerate}\item $\B(S,\leq)$ is projective;
\item $\B(S,\leq)$ is lower finite;
\item $(S,\leq)$ is lower finite.\end{enumerate}\end{prop}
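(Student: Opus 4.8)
The plan is to prove the equivalence by treating (ii)$\Leftrightarrow$(iii) as an elementary combinatorial fact about principal lower sets, and by deriving (i)$\Leftrightarrow$(iii) from the splitting analysis set up above together with the monotone-sections lemma \ref{B_POSET_SECTION}. Throughout I read ``$\B(S,\leq)$ is lower finite'' as a statement about the poset of finitely generated lower subsets ordered by inclusion.

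First, for the equivalence of (ii) and (iii): if $(S,\leq)$ is lower finite, then every finitely generated lower set $L$, being a finite union of finite principal lower sets $S_{\leq X}$, is itself finite; a finite set has only finitely many subsets, hence only finitely many lower subsets, which is exactly the assertion that the principal lower set below $L$ in $\B(S,\leq)$ is finite. This gives (iii)$\Rightarrow$(ii). Conversely, to see (ii)$\Rightarrow$(iii), I would fix $X$ and observe that the assignment $Y\mapsto S_{\leq Y}$ embeds $S_{\leq X}$ into the poset of finitely generated lower subsets contained in $S_{\leq X}$: it is injective because antisymmetry forces $Y=Y'$ whenever $S_{\leq Y}=S_{\leq Y'}$. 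As (ii) makes the target finite, $S_{\leq X}$ is finite, and $(S,\leq)$ is lower finite.

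For (iii)$\Rightarrow$(i), lower finiteness is precisely the condition, noted just before the statement, under which the $\B$-linear inclusion $\B(S,\leq)\hookrightarrow\sh P(S)$ factors through the module $\B S$ of finite subsets. The resulting map is a section of the canonical epimorphism $\B S\twoheadrightarrow\B(S,\leq)$, since a finitely generated lower set is sent to itself and then to the lower set it generates, namely itself. Thus $\B(S,\leq)$ is a retract of the free module $\B S$, hence projective.

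The main work — and the step I expect to be delicate — is (i)$\Rightarrow$(iii). If $\B(S,\leq)$ is projective, then the epimorphism $\B S\twoheadrightarrow\B(S,\leq)$ admits a $\B$-linear section $\sigma$. I would present this epimorphism as $\B p$ for the bijective monotone map $p$ from $S$ with the discrete order to $(S,\leq)$, so that $\B(S,=)=\B S$ is the module of finite subsets. Since $\sigma$ is $\B$-linear it is monotone, so lemma \ref{B_POSET_SECTION} identifies it as the right adjoint of $\B p$. Computing that right adjoint — the largest finite subset $F$ with $\langle F\rangle\subseteq L$, which, as $L$ is a lower set, is the largest finite $F\subseteq L$ — shows that $\sigma(L)$ can be well defined in $\B S$ only when $L$ is finite. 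Applying this to the principal, hence finitely generated, lower sets $L=S_{\leq X}$ forces $(S,\leq)$ to be lower finite. The subtlety is exactly that $\B S$ consists only of \emph{finite} subsets, so that the formally defined right adjoint lands in $\B S$ precisely when every finitely generated lower set is finite; getting this bookkeeping right is the crux of the argument.
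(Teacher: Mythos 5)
Your proposal is correct and follows essentially the same route the paper intends: the implication from lower finiteness to projectivity via the inclusion $\B(S,\leq)\hookrightarrow\sh P(S)$ factoring through $\B S$, and the converse via lemma \ref{B_POSET_SECTION} applied to the bijective monotone map from $S$ with the discrete order, forcing any $\B$-linear section to be the (nonexistent, unless $(S,\leq)$ is lower finite) right adjoint. Your elementary argument for (ii)$\Leftrightarrow$(iii), including the embedding $Y\mapsto S_{\leq Y}$, is exactly the bookkeeping the paper leaves implicit.
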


From Lazard's theorem \ref{ALG_LAZARD} it also follows:

\begin{cor}\label{B_PRIM_FLAT}The free $\B$-module on any poset is flat.\end{cor}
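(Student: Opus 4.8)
The plan is to realise $\B(S,\leq)$ as a filtered colimit of the free modules on \emph{finite} subposets, where projectivity is already in hand by proposition \ref{B_POSET_PROJ}, and then to read off flatness from Lazard's theorem \ref{ALG_LAZARD}.

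First I would exhibit $(S,\leq)$ itself as a filtered colimit in $\mathbf{POSet}$. Order the finite subsets $S_0\subseteq S$, each carrying the induced order, by inclusion; this is a filtered (indeed directed) system. I claim $(S,\leq)\cong\colim_{S_0}(S_0,\leq)$. Since filtered colimits in $\mathbf{POSet}$ are computed on underlying sets, the underlying set of the colimit is $\bigcup_{S_0}S_0=S$, and two elements $x,y$ are comparable in the colimit exactly when they are comparable at some finite stage; taking the stage $\{x,y\}$ with its induced order, this is precisely the relation $\leq$ on $S$. Antisymmetry survives the passage to the colimit because a pair of mutual inequalities must already hold at a single finite stage, where it forces equality. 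Thus the colimit reconstructs $(S,\leq)$ on the nose.

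Next I apply the free functor. As recorded in \S\ref{B}, the functor $\B:\mathbf{POSet}\rightarrow\Mod_\B$ is a left adjoint, so it preserves all colimits; in particular
\[ \B(S,\leq)\;\cong\;\colim_{S_0}\,\B(S_0,\leq), \]
a filtered colimit whose transition maps are the $\B$-linear extensions of the inclusions, sending a lower set to the lower set it generates at the larger stage. Each finite $S_0$ is trivially lower finite, so by proposition \ref{B_POSET_PROJ} every $\B(S_0,\leq)$ is projective, hence flat, hence by Lazard's theorem \ref{ALG_LAZARD} is itself a filtered colimit of finite free modules. A filtered colimit of filtered colimits of finite free modules is again a filtered colimit of finite free modules, so $\B(S,\leq)$ is of this form and is therefore flat by \ref{ALG_LAZARD}. (One could equally bypass Lazard and simply invoke the fact, noted earlier, that a filtered colimit of flat — here projective — modules is flat.)

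The only point genuinely requiring care is the first step: verifying that the filtered colimit of finite subposets really recovers $(S,\leq)$ with its original order, and in particular introduces no spurious comparabilities while retaining antisymmetry. Once that identification is secured, everything that follows is formal, being a matter of transporting the colimit across the left adjoint $\B$ and quoting Lazard's characterisation of flatness.
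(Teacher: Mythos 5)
Your proof is correct and is essentially the paper's argument: the paper derives this corollary from Lazard's theorem \ref{ALG_LAZARD} by (implicitly) writing $\B(S,\leq)$ as the filtered colimit of the free modules on the finite subposets of $S$, each of which is projective by proposition \ref{B_POSET_PROJ}, exactly as you do. Your care over the poset-level colimit and the remark that one may bypass Lazard via closure of flats under filtered colimits are fine but add nothing beyond what the paper intends.
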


\begin{eg}[A flat module with no primitives]The converse to corollary \ref{B_PRIM_FLAT} is false: while every flat module is a filtered colimit of modules free on a poset, there is no requirement that the transition maps preserve these posets. For example, the set $\mu$ of compact open subsets of $\Z_2$ has an expression as a colimit \[\mu=\colim_k\B(\Z/2^k\Z) \simeq \colim_k\B^{2^k}\] with transition maps given by inverse image along $\Z/2^k\Z\twoheadrightarrow\Z/2^{k-1}\Z$, and is therefore flat by Lazard's theorem \ref{ALG_LAZARD}; however, it has no primitive elements (cf. \S\ref{B_PRIM} below) and so cannot be free on a poset.\end{eg}

\subsection{Primitives}\label{B_PRIM}
To achieve our goal of classifying projective modules, we must still characterise which $\B$-modules appear through this construction.

\begin{lemma}\label{B_PRIM_LEMMA}Let $X$ be an element of a $\B$-module $\mu$. The following are equivalent:
\begin{enumerate}\item $X=\bigvee_{i\in I}X_i$, with $I$ a finite set, implies $X=X_i$ for some $i\in I$;
\item if $S\subseteq\mu $ generates - that is, if $\B S\rightarrow\mu$ is surjective - then $X\in S$.\end{enumerate}\end{lemma}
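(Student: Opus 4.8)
The plan is to prove the two implications directly, the only real device being the consideration of the \emph{largest} possible generating set, namely $S_0:=\mu\setminus\{X\}$. Throughout I would use the description of generation afforded by the free-module-on-a-set functor: since $\B S$ is the set of finite subsets of $S$ under join, the $\B$-linear extension $\B S\to\mu$ has image exactly the set of finite joins $\bigvee_i s_i$ of elements $s_i\in S$. Thus ``$S$ generates $\mu$'' unwinds to the statement that every element of $\mu$ is a finite join of elements of $S$, and this translation is really all the structural input the argument needs.

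For \emph{i)}$\Rightarrow$\emph{ii)}, suppose $X$ satisfies the irreducibility condition and let $S$ be any generating set. Then $X\in\mu$ is a finite join $\bigvee_{i\in I}s_i$ with each $s_i\in S$; applying \emph{i)} gives $X=s_i$ for some $i$, whence $X\in S$. For the converse I would argue contrapositively, establishing $\neg$\emph{i)}$\Rightarrow\neg$\emph{ii)}. Assume there is a finite index set $I$ and elements $X_i$ with $X=\bigvee_{i\in I}X_i$ but $X\neq X_i$ for every $i\in I$. Then each $X_i$ lies in $S_0=\mu\setminus\{X\}$, so $X$ is a finite join of elements of $S_0$; every other element of $\mu$ is trivially the join of the singleton containing itself, again drawn from $S_0$. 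Hence $\B S_0\twoheadrightarrow\mu$ is surjective, i.e.\ $S_0$ generates $\mu$, and yet $X\notin S_0$, contradicting \emph{ii)}.

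I do not anticipate a serious obstacle: the argument amounts to the observation that $\mu\setminus\{X\}$ is a generating set precisely when $X$ is a nontrivial finite join. The one point requiring care is the degenerate case $I=\varnothing$, where the empty join forces $X$ to be the bottom element $-\infty$; this is handled uniformly, since $-\infty$ is the empty join of (vacuously) elements of $S_0$, so $S_0$ still generates and $-\infty$ is correctly excluded from the primitives by both \emph{i)} and \emph{ii)}. As a byproduct this characterisation identifies the primitive elements as exactly those common to \emph{every} generating set, which is the form in which I expect to reuse the ``unique minimal generators'' principle in the sequel.
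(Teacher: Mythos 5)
Your proof is correct; the paper in fact states this lemma without any proof, treating it as routine. Your argument --- unwinding surjectivity of $\B S\rightarrow\mu$ as ``every element of $\mu$ is a finite join of elements of $S$'' and testing condition \emph{ii)} against the complement $S_0=\mu\setminus\{X\}$ --- is exactly the standard verification the paper leaves implicit, including the correct treatment of the empty join, which is what excludes $-\infty$ from being primitive.
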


\begin{defns}\label{B_PRIM_DEF}An element $X$ of a $\B$-module $\mu$ satisfying the equivalent conditions of lemma \ref{B_PRIM_LEMMA} is said to be $\vee$-\emph{primitive}, or simply \emph{primitive} if no confusion can arise. (Note that $-\infty$ is never primitive.) The set of primitive elements of $\mu$ is denoted $\mathrm{Prim}\mu$. It is usually not functorial in either direction.

We have tautological maps
\[ \B(\Prim\mu)\twoheadrightarrow\B(\Prim\mu,\leq)\rightarrow\mu \]
where $\Prim\mu\subset \mu$ carries the induced partial order. If these modules surject onto $\mu$, we say that $\mu$ \emph{has primitive decompositions}. 

A \emph{primitive decomposition} of an element $X\in\mu$ is a lift to $\B(\Prim\mu)$. Such a decomposition is said to be \emph{irredundant} if it is minimal in its fibre of $\B(\Prim\mu)\twoheadrightarrow\B(\Prim\mu,\leq)$. The set of possible primitive decompositions (resp. irredundant decompositions) of elements of $\mu$ is precisely $\B(\Prim\mu)$ (resp. $\B(\Prim\mu,\leq)$).

In equations, a primitive decomposition of $X\in\mu$ is an expression $X=\bigvee_{i\in I}X_i$ with $X_i$ primitive, and it is irredundant if there are no order relations among different $X_i$. \end{defns}

\begin{remark}Beware that an irredundant decomposition in $\mu$ is not necessarily \emph{minimal}: one may perfectly well have a relation
\[ X_1\vee X_2\vee X_3=X_1\vee X_2 \]
between primitives in $\mu$, but unless $X_3\leq X_2$ or $X_3\leq X_1$, both decompositions will be irredundant. Of course, in light of theorem \ref{B_PRIM_THM}, this cannot happen for \emph{projective} modules.\end{remark}

\paragraph{Frees}
A module is free if and only if $\B(\Prim\mu)\rightarrow\mu$ is an isomorphism; that is, if it has \emph{unique} primitive decompositions.

\begin{thm}\label{B_PRIM_FREE}Let $\mu$ be a $\B$-module. The following are equivalent:
\begin{enumerate}\item $\mu$ is free;
\item $\mu$ has unique primitive decompositions.
\end{enumerate}\end{thm}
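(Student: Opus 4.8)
The plan is to notice that both conditions are really statements about the single tautological map $f:\B(\Prim\mu)\to\mu$. By the definitions of \S\ref{B_PRIM}, a primitive decomposition of $X$ is a lift of $X$ along $f$, so having \emph{unique} primitive decompositions means precisely that every fibre of $f$ is a singleton, i.e. that $f$ is a bijection (surjectivity encodes existence, injectivity uniqueness). Since $f$ is the canonical $\B$-linear extension of the inclusion $\Prim\mu\hookrightarrow\mu$, and since its source $\B(\Prim\mu)$ is the free module on the \emph{set} $\Prim\mu$, the whole theorem reduces to the claim that $\mu$ is free if and only if $f$ is a bijection.

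For the implication \emph{i)}$\Rightarrow$\emph{ii)}, I would take $\mu\cong\B S$ and identify $\Prim(\B S)$ with the singletons $\{s\}$, $s\in S$ (equivalently, with the canonical generators furnished by proposition \ref{ALG_CYCLIC}). Using lemma \ref{B_PRIM_LEMMA}, each singleton is primitive: the only elements $\leq\{s\}$ are $-\infty$ and $\{s\}$, so any join equal to $\{s\}$ must have $\{s\}$ among its terms, which is condition \emph{i)} of that lemma (equivalently, every generating set must contain every singleton). Conversely $-\infty$ is never primitive, and any finite subset $T$ with $|T|\geq 2$ equals the join $\bigvee_{s\in T}\{s\}$ of strictly smaller elements, so it cannot be primitive. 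Hence $\Prim(\B S)$ is exactly the set of singletons, and $f$ sends a finite family of singletons to their union; this is manifestly a bijection onto the finite subsets, so decompositions exist and are unique.

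For \emph{ii)}$\Rightarrow$\emph{i)}, suppose $f:\B(\Prim\mu)\to\mu$ is a bijection. It is a homomorphism of $\B$-modules, and the point is that its set-theoretic inverse $g$ is then automatically $\B$-linear: for all $X,Y$ one computes $g(X\vee Y)=g\bigl(f(gX)\vee f(gY)\bigr)=g\bigl(f(gX\vee gY)\bigr)=gX\vee gY$, and likewise $g(-\infty)=-\infty$. Thus $f$ is an isomorphism of $\B$-modules, and since $\B(\Prim\mu)$ is free on the set $\Prim\mu$, so is $\mu$.

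I expect the only genuinely delicate step to be this last upgrade from a bijection to an isomorphism: for bare posets a bijective monotone map need not reflect the order, so it is essential that $f$ preserves $\vee$ (not merely $\leq$), which is exactly what forces $g$ to be monotone as well. The identification of the primitives of a free module with its generators is otherwise routine given lemma \ref{B_PRIM_LEMMA}, though I would take care to note that it holds for infinite $S$ too, where $\B S$ consists of the \emph{finite} subsets and the singletons are still the only primitives.
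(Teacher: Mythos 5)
Your proof is correct and matches the paper's (implicit) argument: the paper offers no separate proof, treating the theorem as a restatement of the observation that $\mu$ is free precisely when the tautological map $\B(\Prim\mu)\rightarrow\mu$ is an isomorphism, and your write-up supplies exactly the details behind that tautology (the primitives of $\B S$ are the singletons, so the map is a bijection; conversely a bijective $\B$-linear map automatically has $\B$-linear inverse, hence is an isomorphism onto a free module). The only blemish is the parenthetical appeal to proposition \ref{ALG_CYCLIC}, which concerns modules over monoids rather than $\B$-modules, but your direct argument via lemma \ref{B_PRIM_LEMMA} makes that reference dispensable.
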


\paragraph{Projectives}More generally, a module is free on a poset if and only if it has unique irredundant primitive decompositions.

\begin{thm}\label{B_PRIM_THM}Let $\mu$ be a $\B$-module. The following are equivalent:
\begin{enumerate}\item $\mu$ is projective;
\item $\mu$ is free on a lower finite poset;
\item $\mu$ has unique irredundant primitive decompositions, and the set of primitives is lower finite.
\end{enumerate}\end{thm}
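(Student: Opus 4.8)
The plan is to treat the equivalence (2)$\Leftrightarrow$(3) as essentially the content of the preceding discussion on frees and projectives: a module is free on the poset $\Prim\mu$ precisely when it has unique irredundant primitive decompositions, and requiring that poset to be lower finite is the same as requiring $\Prim\mu$ to be lower finite. The implication (2)$\Rightarrow$(1) is then immediate from proposition \ref{B_POSET_PROJ}. So all the genuine work lies in (1)$\Rightarrow$(2), which I would establish by extracting from projectivity alone three facts: that $\mu$ is lower finite as a poset, that its primitives generate, and that the resulting irredundant decompositions are unique.

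First I would fix a retraction witnessing projectivity: a surjection $\pi:\B S\twoheadrightarrow\mu$ with a $\B$-linear section $\iota:\mu\to\B S$, $\pi\iota=\id$. Since $\iota$ is injective and $\B$-linear, it is automatically an order-embedding: $X\le Y$ iff $\iota X\vee\iota Y=\iota Y$ iff $\iota X\le\iota Y$, using injectivity. Now each $\iota X$ is a \emph{finite} subset of $S$, and $\iota$ carries the principal lower set $\mu_{\le X}$ into the set of subsets of $\iota X$, i.e. the finite power set $\sh{P}(\iota X)$. Hence $\mu_{\le X}$ is finite for every $X$, so $\mu$ is lower finite. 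This is the step that falls out cleanly.

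With lower finiteness in hand, each interval $\mu_{\le X}$ is a finite join-semilattice, so $X$ is the join of the join-irreducible elements below it; and a join-irreducible of $\mu_{\le X}$ is primitive in $\mu$, since any decomposition of such an element takes place entirely inside $\mu_{\le X}$. Thus primitives generate, and there is a surjection $q:\B(\Prim\mu,\le)\twoheadrightarrow\mu$. For uniqueness I would invoke projectivity a second time: $q$ admits a $\B$-linear section $t$. Evaluating $t$ on a primitive $P$, the minimal generators of the lower set $t(P)$ are primitives whose join is $P$, so by lemma \ref{B_PRIM_LEMMA} one of them equals $P$; being an antichain all lying below $P$, they collapse to the single generator $P$, whence $t(P)=(\Prim\mu)_{\le P}$. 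By $\B$-linearity, for \emph{any} primitive decomposition $X=\bigvee_{P\in D}P$ we then get $t(X)=\bigcup_{P\in D}(\Prim\mu)_{\le P}$, the lower set generated by $D$. Since $t(X)$ does not depend on $D$, all primitive decompositions of $X$ generate one and the same lower set, whose minimal generating set (its maximal elements) is unique; hence the irredundant decomposition is unique and $q$ is an isomorphism. As $\Prim\mu$ inherits lower finiteness from $\mu$, this exhibits $\mu$ as free on a lower finite poset, giving (1)$\Rightarrow$(2).

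I expect the uniqueness of irredundant decompositions to be the main obstacle: the aside following the statement shows that in a general module one may have distinct irredundant decompositions of the same element, so uniqueness cannot be forced by lower finiteness and primitive generation alone. The device that breaks the symmetry is precisely the section $t$ of $q$, which projectivity supplies; the rigidity of $t$ on primitives (that $t(P)$ is forced to be the principal lower set) is exactly what collapses all primitive decompositions of a given element onto a common lower set. By contrast, the lower-finiteness step is a soft consequence of $\iota$ being an order-embedding into a free module, and primitive generation is the standard finite-lattice fact applied to the principal intervals $\mu_{\le X}$.
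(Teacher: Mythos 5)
Your proof is correct, and its crucial step---uniqueness---is exactly the paper's own device: the paper's lemma \ref{B_PRIM_SECTION} is precisely your rigidity claim that any section of $\B(S,\leq)\rightarrow\mu$ must send a primitive $X$ to $S_{\leq X}$, and the paper likewise applies it to the $\B$-linear section of $\B(\Prim\mu,\leq)\twoheadrightarrow\mu$ supplied by projectivity. Your handling of (2)$\Leftrightarrow$(3) and (2)$\Rightarrow$(1) also matches the paper's (both are delegated to the preceding discussion and to proposition \ref{B_POSET_PROJ}). Where you genuinely diverge is in the two supporting steps. For primitive generation the paper invokes lemma \ref{B_PRIM_SUB} (any submodule of a module having primitive decompositions has them), applied to $\mu$ sitting inside a free module; you instead prove lower finiteness of $\mu$ up front---from the observation that the section $\iota$ is an order-embedding and each $\iota X$ is a \emph{finite} subset of $S$, so $\mu_{\leq X}$ injects into $\sh{P}(\iota X)$---and then quote the finite-lattice fact that every element of a finite join-semilattice is a join of its join-irreducibles. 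For lower finiteness of $\Prim\mu$, the paper obtains it a posteriori: once $\mu\cong\B(\Prim\mu,\leq)$ is established, proposition \ref{B_POSET_PROJ} applied to the projective module $\mu$ forces the poset to be lower finite; you again read it off directly from lower finiteness of $\mu$. Your route buys self-containedness: everything is extracted from the single retraction, and lower finiteness appears as an upfront structural fact rather than a consequence of proposition \ref{B_POSET_PROJ}. The paper's route buys modularity and generality: lemma \ref{B_PRIM_SUB} requires no finiteness whatsoever, which is why it can be reused later for $\B[A]$-modules (lemma \ref{MON_PRIM_THM}) and for normal modules in \S\ref{POLY_FAM}, settings where principal lower sets are typically infinite and your lattice-theoretic generation argument would not apply.
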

\begin{proof}
To complete the proof of this theorem, we must show that projective modules have unique irredundant primitive decompositions.

\begin{lemma}\label{B_PRIM_SUB}Any submodule of a $\B$-module having primitive decompositions itself has primitive decompositions.\end{lemma}

\begin{lemma}[Sections over primitives]\label{B_PRIM_SECTION}Let $S\subseteq\mu$ be a subset, $\sigma$ any section of \[\B(S,\leq)\rightarrow\mu. \] If $X\in\mu$ is primitive, then $\sigma X=S_{\leq X}$.\end{lemma}
\begin{proof}Let $\sigma$ be a section, and let $X\in\mu$. Since $\bigvee_{Y\in\sigma X}Y=X$, $\sigma X$ must certainly be contained in $S_{\leq X}$. If $X$ is primitive, then in fact $X\in\sigma X$ and so $\sigma X=S_{\leq X}$.\end{proof}

A projective module $\mu$ is a submodule of a free module, and hence by lemma \ref{B_PRIM_SUB}, $\B(\Prim\mu,\leq)\rightarrow\mu$ is surjective. By projectivity, it admits a $\B$-linear section. Applying the lemma \ref{B_PRIM_SECTION} on sections over primitives to $S=\Prim\mu$ shows that it is an isomorphism.
\end{proof}

\section{Modules over free semirings}\label{MON}
Here we generalise and discuss modules over semirings that are free on a pair $(A;A^+)$ of monoids with $A$ an $A^+$-algebra. Our convention in this section will be that monoids are multiplicative with zero; the category of such pairs is denoted $\mathrm{Pair}_{\F_1}$. Following the remarks of \S\ref{ALG_PTS}, the results translate straightforwardly into the unpointed regime.


Usually, $A$ will be a localisation of $A^+$; correspondingly, we will typically consider idempotent semirings $\alpha$ that are a localisation of their semiring of integers $\alpha^\circ:=\alpha_{\leq 0}$. However, these assumptions are not actually necessary for the conclusions of \S\S\ref{MON_FREE}-\ref{MON_PRIM}.

We will later need to assume that $A^+$ is integrally closed in $A$ (\S\ref{MON_DOMAIN}) and that $\alpha$ is \emph{normal} in the sense of \cite{norm} (\S\ref{POLY_FAM}).

\

The structure of this section is as follows. The first two subsections and \S\ref{MON_PRIM} are parallel to the structure of \S\ref{B}. In \S\ref{MON_POMOD} we derive formal criteria for the free module on a partially ordered projective $A$-module to be projective, and in \S\ref{MON_PRIM} we show that conversely, every projective module over the free semiring $\B[A]$ on $A$ comes from a partially ordered $A$-module.

The technical heart of the paper lies in \S\ref{MON_QUIVER}, where we discuss a method of presenting partial orders via quivers. This will help us to unpack the meanings of the conditions appearing in \S\ref{MON_POMOD}. In particular, we define the notion of \emph{non-degeneracy} of partially ordered free modules, which has crucial finiteness implications.

When $A$ is a domain, we obtain a complete classification in \S\ref{MON_DOMAIN}. The remaining poings are, first, that the partially ordered $A$-module behind a projective $\B[A]$-module is itself projective, and second, that non-degeneracy is a necessary condition for lower finiteness.

\subsection{Free semirings and modules}\label{MON_FREE}
Let $(A;A^+)$ be an $\F_1$-algebra pair, and define, as in \cite{mac}, the \emph{fractional ideal semiring}
\[ \B[A;A^+]:=\{\text{finite }A^+\text{-submodules of }A\} \]
with semiring of integers
\[ \B[A;A^+]^\circ=\B[A^+;A^+].\]
We also write simply $\B[A]$ when $A^+$ is considered implicit. There is natural monoid homomorphism
\[ \log:A\rightarrow\B[A;A^+], \quad \log(A^+)\leq 0 \]
the `universal valuation' of $(A;A^+)$. This valuation is injective if and only if $A^+$ is sharp (i.e.\ has no invertible elements other than $1$). In general, the image of $A^+$ in the monoid semiring is its universal sharp quotient. For the purposes of studying $\B[A]$-modules, then, we can and will always assume that $A^+$ is sharp.

Note that in contrast to the situation for commutative rings, the fractional ideal functor has a monadic right adjoint \emph{forgetful functor}
\[ \Alg_\B \rightarrow \mathrm{Pair}_{\F_1},\quad \alpha \mapsto (\alpha;\alpha^\circ) \]
which forgets the $\vee$ operation. In this language, $\log$ is the unit of the adjunction. 

More generally, if $M$ is an $A$-module, one can form the \emph{disc} (or \emph{fractional submodule}) set
\[  \B(M;A^+):=\{\text{finite }A^+\text{-submodules of }M\}, \]
more briefly, $\B(M)$, which carries a natural structure of a $\B[A]$-module. This fits into an adjunction
\[ \B(-;A^+):\Mod_A\leftrightarrows\Mod_{\B[A]} \]
with the evident forgetful functor. 

It follows that $\B(-;A^+)$ preserves colimits, and hence the classes of free, projective, and (by the Lazard theorem \ref{ALG_LAZARD}) flat modules.

\subsection{Partially ordered modules}\label{MON_POMOD}
If $A\in\mathrm{Pair}_{\F_1}$, then any $A$-module carries a natural pointed \emph{$A^+$-divisibility} partial order
\[ fv\leq_{A^+}v \quad \forall f\in A^+ \]
which is non-degenerate if and only if $A^+$ is sharp. An $A$-linear map is automatically monotone with respect to this order.

The divisibility order on a smash product is the smash product of the orders on the factors. It therefore makes $(A,\leq_{A^+})$ into an algebra object in the closed monoidal category $(\mathbf{POSet}_*,\tens_{\F_1})$ of pointed posets (cf. \S\ref{ALG_PTS}), and $(M,\leq_{A^+})$ into an $(A,\leq_{A^+})$-module. This defines fully faithful functors
\[ \mathrm{Pairs}_{\F_1}\hookrightarrow\mathrm{Alg}(\mathbf{POSet}_*,\tens_{\F_1})  \]
and, for each $(A;A^+)$,
\[ \Mod_A\hookrightarrow\Mod_{(A,\leq_{A^+})}(\mathbf{POSet}_*). \]

In the spirit of \cite{G2}, the free $\B[A]$-module on an $A$-module $M$ can be presented via this partial order
\[ \B(M;A^+)\cong\B(M;\leq_{A^+})=\B(M;\mathbf{Set}_*)/(fv\leq v|\forall f\in A^+) \]
which shows that, in particular, $\B(M)$ has unique irredundant primitive decompositions. By corollary \ref{B_PRIM_FLAT}, it is moreover flat as a $\B$-module.

\begin{defn}A \emph{partially ordered module}, or \emph{po-module}, over a $\F_1$-algebra pair $(A;A^+)$ is an $(A,\leq_{A^+})$-module object in $(\mathbf{POSet}_*,\tens_{\F_1})$. That is, it is an $A$-module together with a partial order such that
\begin{enumerate}\item  $fv\leq gv$ for all $v\in M$, $g\in A$ and $f\in A^+g$;
\item $v\leq w\Rightarrow fv\leq fw$ for all $v,w\in M$ and $f\in A$.\end{enumerate} 
The category of partially ordered $A$-modules is abbreviated  $\mathbf{PO}\Mod_A:=\Mod_{(A,\leq_{A^+})}(\mathbf{POSet}_*)$. It is closed monoidal and compactly generated.\end{defn}

By analogy with the case of $\B$-modules, we can define a `free' $\B[A]$-module on any partially ordered $A$-module:
\[ \B(M,\leq):=\{\text{finitely generated lower $A^+$-submodules of $M$}\}; \]
here a lower submodule is `finitely generated' if it is the lower hull of a finite $A^+$-submodule; and hence a monadic adjunction
\[ \mathbf{PO}\Mod_A\leftrightarrows\Mod_{\B[A]} \]
extending the one defined in \S\ref{MON_FREE}. Intuitively, $\B[A]$-modules that are free over a po-$A$-module are those `defined by monomial relations'.

\begin{prop}\label{MON_POMOD_THM}The free module on a partially ordered projective $A$-module $(M,\leq)$ is a projective $\B[A]$-module if and only if the following conditions are satisfied:
\begin{enumerate}\item the lower hull of a finite disc in $M$ is finite; \hfill \emph{lower finite}
\item if $D\subseteq M$ is a lower disc, then so is $fD$ for $f\in A$. \hfill \emph{lower saturated}
\end{enumerate}\end{prop}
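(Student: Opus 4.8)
The plan is to run the $A$-equivariant analogue of the argument behind Proposition \ref{B_POSET_PROJ}. Write $\B(M)=\B(M;A^+)$ for the free $\B[A]$-module on the underlying $A$-module $M$; since $M$ is projective and $\B(-;A^+)$ preserves projectives, $\B(M)$ is a projective $\B[A]$-module. The presentation recalled in \S\ref{MON_POMOD} exhibits $\B(M,\le)$ as a quotient
\[ q\colon\B(M)\twoheadrightarrow\B(M,\le),\qquad D\mapsto D^{\downarrow}, \]
sending a finite disc to its $\le$-lower hull. As $\B(M)$ is projective and $q$ is an epimorphism, $\B(M,\le)$ is projective if and only if $q$ admits a $\B[A]$-linear section (a splitting makes $\B(M,\le)$ a retract of the projective $\B(M)$; conversely exactness of $\Hom(\B(M,\le),-)$ lifts the identity along $q$). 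So the whole statement reduces to deciding when such a section exists.

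The key observation is that $q$ is the $\B$-linear extension of a bijective monotone map of posets. Indeed, by the first defining axiom of a po-module the order $\le$ refines the divisibility order $\le_{A^+}$, so the identity of the underlying set is a monotone bijection $p\colon(M,\le_{A^+})\to(M,\le)$. Under the identification of $\B(M)$ with the free $\B$-module $\B(M,\le_{A^+})$ recalled in \S\ref{MON_POMOD}, and of $\B(M,\le)$ with the free $\B$-module on the poset $(M,\le)$ --- note that a $\le$-lower subset is automatically an $A^+$-submodule, since $fv\le_{A^+}v$ forces $fv\le v$ --- the quotient $q$ is exactly $\B p$. Lemma \ref{B_POSET_SECTION} then applies verbatim: any monotone, in particular any $\B[A]$-linear, section of $q$ must coincide with the right adjoint of $q$, which is the set-theoretic inclusion $\sigma\colon L\mapsto L$ of a lower submodule back into $\B(M)$.

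It remains to read off exactly when this forced candidate $\sigma$ is a genuine $\B[A]$-linear section, and here the two conditions enter separately. First, $\sigma$ takes values in $\B(M)$ --- that is, each finite lower submodule $L$ is a finite disc --- precisely when the lower hull of a finite disc is again finite, which is the lower finiteness condition. Granting it, $\sigma$ is a section ($q\sigma L=L^{\downarrow}=L$, as $L$ is already lower) and preserves joins, these being unions on both sides. Finally one compares the two monoid actions: the $A$-action on $\B(M,\le)$ is $L\mapsto(fL)^{\downarrow}$ whereas on $\B(M)$ it is $L\mapsto fL$, so $\sigma$ is $A$-equivariant exactly when $fL$ is already lower for every lower disc $L$ and every $f\in A$, which is the lower saturation condition. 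Assembling: if both conditions hold then $\sigma$ is a $\B[A]$-linear section and $\B(M,\le)$ is a retract of $\B(M)$, hence projective; conversely, if $\B(M,\le)$ is projective a section exists, must equal $\sigma$ by Lemma \ref{B_POSET_SECTION}, and its well-definedness and $A$-linearity force the two conditions in turn.

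The point requiring real care --- rather than a single hard obstacle --- is the bookkeeping of the two module structures in the last step: the $\B$-module identification $q=\B p$ holds on the nose, but the $A$-actions upstairs and downstairs genuinely differ, and one must check that the lower-hull operation interacts with the $A$-action \emph{only} through the lower saturation condition. The uniqueness of the section, which is what makes the converse direction work at all, is supplied formally by Lemma \ref{B_POSET_SECTION}; the non-formal content is thus isolated entirely in recognising the two conditions as, respectively, the well-definedness and the $A$-equivariance of that unique candidate $\sigma$.
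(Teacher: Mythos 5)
Your proof is correct and follows essentially the same route as the paper's: reduce projectivity of $\B(M,\leq)$ to the existence of a $\B[A]$-linear section of the canonical quotient from the projective module $\B(M;A^+)$, invoke the monotone-section lemma \ref{B_POSET_SECTION} to identify any such section with the right adjoint (the inclusion of lower discs), and read off condition \emph{i)} as well-definedness of that candidate and condition \emph{ii)} as its $A$-linearity. The paper's three-sentence proof is a compressed version of exactly this; your expansion (notably the explicit identification of the quotient with $\B p$ for the monotone bijection $p:(M,\leq_{A^+})\rightarrow(M,\leq)$, which is what licenses the appeal to lemma \ref{B_POSET_SECTION}) supplies the details the paper leaves implicit.
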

\begin{proof}Indeed, we have already observed that $\B(-;A^+)$ preserves projectivity, and so assuming $M$ projective, by the monotone section lemma \ref{B_POSET_SECTION} we must check that the canonical generator
\[ \B(M;A^+)\twoheadrightarrow \B(M,\leq) \]
has a right adjoint. Condition \emph{i)} is enough to show that the right adjoint exists as a map of posets, and \emph{ii)} is the condition that it be $A$-linear. 
\end{proof}

We will expand upon the meanings of the other two conditions in \S\ref{MON_QUIVER}.

\begin{cor}A $\B[A]$-module is flat if it is free on a lower cancellative po-$A$-module that is flat as an $A$-module.\end{cor}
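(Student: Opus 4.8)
The strategy is to realise $\B(M,\leq)$ as a filtered colimit of finite free $\B[A]$-modules and then appeal to Lazard's theorem \ref{ALG_LAZARD}. Two facts drive the argument: first, that the free-module functor $\B(-,\leq)\colon\mathbf{PO}\Mod_A\to\Mod_{\B[A]}$ is the left adjoint of a monadic adjunction and so preserves all colimits; and second, the Lazard form of flatness for $A$-modules, namely that a flat $A$-module is a filtered colimit of finite free $A$-modules.

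I would begin by writing $M=\colim_i F_i$ as a filtered colimit of finite free $A$-modules $F_i\cong A^{n_i}$. The substantive step is to promote this to a colimit in $\mathbf{PO}\Mod_A$: one equips each $F_i$ with its divisibility order $\leq_{A^+}$ and argues that $(M,\leq)=\colim_i(F_i,\leq_{A^+})$ \emph{as po-modules}. This is exactly where the \emph{lower cancellative} hypothesis intervenes. Cancellativity ensures that every order relation holding in $(M,\leq)$ is the image of a divisibility relation already present at some finite free stage, so that the order on $M$ is precisely the colimit of the divisibility orders, and the diagram genuinely lives in the category of po-modules rather than merely in $\Mod_A$.

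Granting this, colimit-preservation of $\B(-,\leq)$ yields $\B(M,\leq)\cong\colim_i\B(F_i,\leq_{A^+})$. Each stage is a finite free $\B[A]$-module: indeed $\B(F_i,\leq_{A^+})=\B(F_i;A^+)$ by the identification of \S\ref{MON_POMOD}, and since $\B(-;A^+)$ preserves free modules this equals $\B[A]^{n_i}$. (Alternatively, one may invoke proposition \ref{MON_POMOD_THM} to see that each stage is at least projective, the cancellative condition supplying lower finiteness.) A filtered colimit of finite free modules is flat by Lazard's theorem \ref{ALG_LAZARD}, so $\B(M,\leq)$ is flat, as claimed.

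The main obstacle is the middle paragraph: upgrading the flat presentation $M=\colim_i F_i$, a priori only a colimit of $A$-modules, to a filtered colimit in $\mathbf{PO}\Mod_A$ whose colimit carries exactly the order $\leq$. Concretely one must verify that forming lower hulls commutes with the filtered colimit, equivalently that the transition maps neither destroy nor manufacture order relations; this compatibility is the operational meaning of \emph{lower cancellative}. Once the colimit is established inside the category of po-modules, the rest of the argument is formal.
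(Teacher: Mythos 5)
Your middle step contains a genuine gap, and it is not one that any reading of ``lower cancellative'' can close. If every finite stage $F_i$ carries only its divisibility order, then the filtered colimit $\colim_i(F_i,\leq_{A^+})$ in $\mathbf{PO}\Mod_A$ is $(M,\leq_{A^+})$: in a filtered colimit of po-modules, a relation $[x]\leq[y]$ holds exactly when some representatives at a common stage satisfy $x'\leq_i y'$, and the image in $M$ of a divisibility relation is again a divisibility relation. So your claimed identity $(M,\leq)=\colim_i(F_i,\leq_{A^+})$ is \emph{equivalent} to $\leq$ being the divisibility order on $M$. No hypothesis on $(M,\leq)$ rescues this: an order relation joining distinct cyclic factors --- e.g.\ $x_1\leq x_2$ on $A^{\oplus 2}$, presented by a one-edge quiver as in \S\ref{MON_QUIVER} --- is never the image of a divisibility relation at any stage, and such orders are precisely what the corollary is about. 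In the only case where your argument applies, $\B(M,\leq)=\B(M;A^+)$ and the conclusion reduces to the fact, already recorded at the end of \S\ref{MON_FREE}, that $\B(-;A^+)$ preserves flat modules. The natural repair --- giving $F_i$ the order pulled back along $F_i\to M$, which \emph{does} exhibit $(M,\leq)$ as a filtered colimit in $\mathbf{PO}\Mod_A$ --- breaks the other half of your argument: $\B(F_i,\leq_i)$ is then no longer the free module $\B[A]^{n_i}$, and its flatness or projectivity is exactly what would remain to be proved, via the lower finiteness and lower saturation conditions of proposition \ref{MON_POMOD_THM}, which the pullback orders are not known to satisfy.

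The argument the paper's preceding material supports is shorter and avoids colimits of orders altogether; it is the flat analogue of the proof of proposition \ref{MON_POMOD_THM}. Since $M$ is flat over $A$, the module $\B(M;A^+)$ is flat over $\B[A]$ (end of \S\ref{MON_FREE}, using Lazard's theorem \ref{ALG_LAZARD} and the fact that $\B(-;A^+)$ preserves colimits and finite free modules). The hypothesis on the order --- whose role, whatever the precise meaning of the undefined term ``lower cancellative'', must be that of conditions \emph{i)} and \emph{ii)} of proposition \ref{MON_POMOD_THM} --- guarantees via the monotone section lemma \ref{B_POSET_SECTION} that the canonical surjection $\B(M;A^+)\twoheadrightarrow\B(M,\leq)$ admits a $\B[A]$-linear section: lower finiteness makes the right adjoint exist as a map of posets, and lower saturation makes it $A$-linear. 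Hence $\B(M,\leq)$ is a retract of a flat $\B[A]$-module and is therefore flat. Note also that your appeal to ``the Lazard form of flatness for $A$-modules'' is delicate: corollary \ref{ALG_LAZARD} is proved for semirings, and the paper's aside in \S\ref{ALG} warns that exactness-flatness misbehaves over monoids, so the hypothesis ``flat as an $A$-module'' can only be used in the form ``filtered colimit of finite free $A$-modules'', which is how \S\ref{MON_FREE} uses it --- and which the retract argument above consumes in a single step.
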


\subsection{Partial orders from quivers}\label{MON_QUIVER}In the case of projective $A$-modules - or more generally, direct sums of cyclic modules - we can give a fairly explicit method for defining partial orders. We will use this method as an auxiliary tool to obtain a good classification theorem \ref{MON_DOMAIN_FIRST}.

Let $M\cong \bigoplus_{i\in I} A_i$ be such a module, and let $\sh Q$ be a (possibly infinite) quiver with vertex set $I$. Suppose that we are given the structure of a \emph{representation} of $\sh Q$ on $M$ - that is, for each edge in $\sh Q$ from $i$ to $j$, a map $A_i\rightarrow A_j$. If we choose generators $x_i\in A_i$ for the cyclic factors of $M$ - since $A^+$ is sharp, this is the same as choosing a cyclic $A^+$-structure for each factor - then such can be represented by attaching an element of $A\twoheadrightarrow\Hom_A(A_i,A_j)$ to each edge of $\sh Q$.

With a choice of generators, we can exchange a quiver representation for a presentation
\begin{align}\label{q1} \{(ex_i,x_i)|e\in\mathrm{Edge}(\sh Q)\}\subset M^2 \end{align}
of a partial order on $M$. In particular, this presentation is finite if and only if $\sh Q$ has finitely many edges. Conversely, any presentation $R\subset M^2$ of this form - with the right-hand term always the chosen generator of its cyclic factor - can be obtained from a quiver representation, for which $R$ is the set of edges via the natural projection $M\setminus0\rightarrow I$.

More invariantly, the action of the path category $\mathrm{Path}(\sh Q)$ of $\sh Q$ defines an $A$-invariant pre-order  
\begin{align} \label{q2}x\leq^{\sh Q}\!\! y \quad\Leftrightarrow\quad \exists\gamma\in\mathrm{Path}(\sh Q):x=\gamma y \end{align} on $M$. It is the transitive and $A$-invariant closure of the relation defined by the set in (\ref{q1}).

The union of this pre-order with the $A^+$-divisibility order is an $A$-module pre-order
\begin{align}\label{q3} x\leq^\sh{Q}_{A^+}\!\! y \quad\Leftrightarrow\quad \exists\gamma\in\mathrm{Path}_{A^+}(\sh Q):x=\gamma y \end{align}
where $\mathrm{Path}_{A^+}$ denotes the $A^+$-linear extension of the path category. The latter is degenerate if there is a cycle $\gamma:i\rightarrow i$ in $\mathrm{Path}(\sh Q)$ such that $1\leq_{A^+}\gamma\in\End(A_i)$; otherwise, it defines the structure of a po-$A$-module on $M$. We call it the partial order \emph{presented by $\sh Q\curvearrowright M$}.

\paragraph{Lower saturation}Partial orders on $M$ can be equivalently described by producing, for each $x\in M$, the \emph{lower hull} $M_{\leq x}$. For \emph{lower saturated} partial orders (cf. proposition \ref{MON_POMOD_THM}, \emph{ii}) it is enough to define the lower hull for $x_i$ the generators of the cyclic factors of $M$, since by definition in that case,
\[ M_{\leq fx_i}=fM_{\leq x_i} \]
gives the formula for general elements $fx_i$ of $M$.

From such a module we can produce a quiver whose edges $i\rightarrow j$ are a generating set for $M_{\leq x_i}\cap A_j$ as an $A^+$-module. This quiver presents the partial order via (\ref{q1}).

Conversely, the partial order presented by a quiver action $\sh Q\curvearrowright M$ has lower sets
\begin{align}\label{lower} M_{\leq x}=\mathrm{Path}_{A^+}(\sh Q)x = A^+\mathrm{Path}(\sh Q)x \end{align}
whose definition manifestly commutes with the action of $A$. In other words, quiver partial orders are lower saturated.

\begin{prop}[Lower saturation]\label{MON_DOMAIN_SECOND}Let $(M,\leq)$ be a partially ordered direct sum of cyclic $A$-modules. The following are equivalent:
\begin{enumerate}\item $(M,\leq)$ is lower saturated;
\item $\leq$ can be presented by a quiver.
\end{enumerate}\end{prop}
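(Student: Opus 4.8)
The plan is to dispatch (ii)$\Rightarrow$(i) immediately and spend the real work on the converse. For (ii)$\Rightarrow$(i) I would simply read off the formula \eqref{lower}: if $\leq$ is presented by $\sh Q\curvearrowright M$, then $M_{\leq x}=A^+\mathrm{Path}(\sh Q)x$. Every $\gamma\in\mathrm{Path}(\sh Q)$ is a composite of the $A$-linear edge maps $A_i\to A_j$, hence $A$-linear, and multiplication by $A^+$ commutes with that of any $f\in A$ because $A$ is commutative; so $M_{\leq fx}=A^+\mathrm{Path}(\sh Q)(fx)=f\,A^+\mathrm{Path}(\sh Q)x=fM_{\leq x}$. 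Thus the lower hull of $fx$ is $f$ times the lower hull of $x$, which is exactly lower saturation in the sense of condition \emph{ii)} of proposition \ref{MON_POMOD_THM}. So the whole content is the converse.

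For (i)$\Rightarrow$(ii) I would construct the quiver directly from the lower hulls of the chosen generators $x_i$ of the cyclic factors. Lower saturation says precisely that $M_{\leq fx_i}=fM_{\leq x_i}$, so the order is already determined by the family $\{M_{\leq x_i}\}_{i\in I}$. Since $M=\bigoplus_i A_i$ is a coproduct of cyclic modules, every nonzero element lies in a unique factor (proposition \ref{ALG_CYCLIC}), so the projection $M\setminus 0\to I$ splits each lower hull as $M_{\leq x_i}=\bigcup_j(M_{\leq x_i}\cap A_j)$, and each piece $M_{\leq x_i}\cap A_j$ is an $A^+$-submodule of $A_j$, being the intersection of the lower $A^+$-submodule $M_{\leq x_i}$ with $A_j$. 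Choosing an $A^+$-generating set $G_{ij}\subseteq A_j$ for each piece and writing each $g\in G_{ij}$ as $g=e_g x_j$, I attach to $\sh Q$ an edge $i\to j$ carrying $e_g\in A$, its associated relation \eqref{q1} being $g\leq x_i$. By construction every generating relation holds in $(M,\leq)$.

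The point to verify is that this $\sh Q$ presents $\leq$, and I expect the recovery of all of $\leq$ to be the only real difficulty; one inclusion is formal. Indeed, the relations $g\leq x_i$ hold in $(M,\leq)$, and $\leq$ is a transitive, $A$-invariant order containing $A^+$-divisibility, so it contains the closure $\leq^{\sh Q}_{A^+}$ described by \eqref{q3}. For the reverse inclusion, suppose $y\leq x$ and write $x=fx_i$; lower saturation gives $y\in fM_{\leq x_i}$, so $y=fy'$ with $y'\leq x_i$. Locating $y'$ in its factor $A_j$ and recalling that an $A^+$-submodule of a module over the monoid $A$ consists precisely of the scalar multiples of its generators, I get $y'=ag$ with $a\in A^+$ and $g\in G_{ij}$. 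Then $g\leq^{\sh Q}x_i$ via the single edge, $y'=ag\leq^{\sh Q}_{A^+}g$ by $A^+$-divisibility, and $y=fy'\leq^{\sh Q}_{A^+}fx_i=x$ by $A$-invariance, so transitivity closes the argument (the case $y=0$ being trivial). Antisymmetry, equivalently non-degeneracy of the presentation, is automatic here, since the resulting preorder coincides with the genuine partial order $\leq$.
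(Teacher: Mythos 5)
Your proposal is correct and takes essentially the same route as the paper: the paper likewise gets (ii)$\Rightarrow$(i) from the formula (\ref{lower}) that $M_{\leq x}=A^+\mathrm{Path}(\sh Q)x$ commutes with the $A$-action, and for (i)$\Rightarrow$(ii) builds the quiver whose edges $i\to j$ are an $A^+$-generating set of $M_{\leq x_i}\cap A_j$. The only difference is that you spell out the verification that this quiver really presents $\leq$ (both inclusions and antisymmetry), which the paper asserts without detail.
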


\paragraph{Lower finiteness}When $(M,\leq)$ is lower finite, in particular each $M_{\leq x_i}$ is finitely generated, and so by sharpness of $A^+$ has a unique set of primitive generators. Let us call the quiver $\sh Q$ with these generators as its set of edges the \emph{canonical quiver}. (If $A^+_{\leq i}\cap A_i^+=A_i^+$, then this algorithm yields a \emph{trivial loop} at $i$, which we may exclude.) It has finitely edges departing from each vertex. In particular:

\begin{lemma}Let $M$ be finite and $\leq$ lower finite. Then $(M,\leq)$ is finitely presented.\end{lemma}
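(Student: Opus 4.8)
The plan is to read off a finite presentation directly from the canonical quiver $\sh Q$ constructed above, using lower finiteness to bound its edges and finiteness of $M$ to bound its vertices. First I would record that, since $M$ is finite, it is a direct sum $\bigoplus_{i\in I}A_i$ over a \emph{finite} index set $I$; as $A^+$ is sharp, the generator $x_i$ of each cyclic factor is canonical, and the $x_i$ are the (finitely many) vertices of $\sh Q$.

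Next I would exploit lower finiteness at each vertex. The principal lower hull $M_{\leq x_i}$ coincides with the lower hull of the finite disc $A^+x_i$, since the divisibility order already places $A^+x_i$ below $x_i$; hence by hypothesis it is a finitely generated $A^+$-submodule. Sharpness of $A^+$ then endows it with a \emph{unique} finite set of primitive generators, and these — each lying in a single factor $A_j$, hence sorted into edges $i\rightarrow j$ — are exactly the arrows of $\sh Q$ departing from $i$. So $\sh Q$ has finitely many arrows leaving each vertex.

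Combining the two inputs, $\sh Q$ has finitely many vertices, each with finitely many outgoing arrows, so $\mathrm{Edge}(\sh Q)$ is finite. Under the dictionary (\ref{q1}) a finite edge set is precisely a finite set of defining relations, so it only remains to confirm that $\sh Q$ presents the \emph{given} order $\leq$ rather than a proper sub-order. The order $\leq^{\sh Q}_{A^+}$ presented by $\sh Q$ has lower sets computed by (\ref{lower}), namely $A^+\mathrm{Path}(\sh Q)x$, so in particular $M_{\leq^{\sh Q}x_i}=A^+\mathrm{Path}(\sh Q)x_i$. The one-step paths out of $i$ already reach every primitive generator of $M_{\leq x_i}$, giving $M_{\leq^{\sh Q}x_i}\supseteq M_{\leq x_i}$, while $A$-invariance and transitivity of $\leq$ keep every longer path below $x_i$, giving the reverse inclusion; thus $\leq$ and $\leq^{\sh Q}$ agree on the generators.

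I expect this matching to be the main obstacle. Both orders are lower saturated — the quiver order automatically, by the formula (\ref{lower}), and $\leq$ because in this subsection it is an order presented by a quiver (equivalently, lower saturated, by proposition \ref{MON_DOMAIN_SECOND}) — and lower saturation is exactly what is needed to propagate agreement on the generators to all of $M$ through $M_{\leq fx_i}=fM_{\leq x_i}$, whence $\leq^{\sh Q}=\leq$. The counting in the earlier steps is routine once lower finiteness is available; the real content is that the finitely many relations extracted from the generators genuinely regenerate $\leq$, and this is where lower saturation, and hence proposition \ref{MON_DOMAIN_SECOND}, is indispensable. (Indeed, for an order that is not lower saturated one can arrange the lower hulls of all generators to be finitely generated yet the full order to require infinitely many relations, so the saturation hypothesis cannot be dropped.)
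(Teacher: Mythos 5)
Your proof is correct and is essentially the paper's own argument, which states the lemma as an immediate consequence of the canonical quiver construction: finiteness of $M$ bounds the vertices, lower finiteness together with sharpness of $A^+$ bounds the edges leaving each vertex, and the resulting finite quiver presents $\leq$. Your careful justification of that last step --- agreement of lower hulls on the generators, propagated to all of $M$ by lower saturation, which you rightly read as a standing hypothesis of this subsection (via proposition \ref{MON_DOMAIN_SECOND}) rather than a consequence of the stated ones --- simply makes explicit what the paper leaves implicit.
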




\paragraph{Non-degeneracy}Unfortunately, being presented by a quiver with finitely many edges departing each vertex is not sufficient to guarantee lower finiteness; by the formula \ref{lower}, it is the action of the path algebra at $i$ that we need to worry about, and the latter is infinite whenever there is a cycle at $i$. We need a way to disregard such cycles.

\begin{defn}[Non-degeneracy]Let $M$ be a direct sum of cyclic $A$-modules. A partial ordering on $M$ is said to be \emph{non-degenerate} if the induced partial order on each cyclic factor is the $A^+$-divisibility order $\leq_{A^+}$.\end{defn}

Non-degeneracy of a partial order entails that the element of $A$ attached to any path in a presenting quiver $\sh Q$ with the same start and end point $i$ actually lies in $A^+\subseteq \End(A_i)$, and so
\[ M_{\leq x_i}\cap A_i=A^+x_i \]
for any $x_i\in A_i$. Such cycles can therefore be disregarded in the presentation (\ref{q3}), and we may restrict attention to the set $\sh Q^\emptyset\subseteq\mathrm{Path}(\sh Q)$ of \emph{acyclic paths}. The latter is finite as soon as $\sh Q$ is.

\begin{lemma}\label{MON_DOMAIN_NONDEGEN}Let $(M,\leq)$ be lower saturated and finitely presented. If $\leq$ is non-degenerate, then it is lower finite.\end{lemma}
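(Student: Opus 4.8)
The plan is to combine the quiver presentation with non-degeneracy to show that every principal lower hull is finitely generated over $A^+$, and then bootstrap to arbitrary finite discs via lower saturation. First I would invoke lower saturation: by Proposition \ref{MON_DOMAIN_SECOND}, $\leq$ is presented by a quiver action $\sh Q\curvearrowright M$, and since $(M,\leq)$ is finitely presented I may take $\sh Q$ to have finitely many edges, hence only finitely many vertices carrying edges. By the formula (\ref{lower}), the principal lower hull of a cyclic generator $x_i$ is $M_{\leq x_i}=A^+\mathrm{Path}(\sh Q)x_i$, so it suffices to produce a finite $A^+$-generating set for this, for each $i$.

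The crux is to show that the a priori infinitely many paths out of $i$ contribute only finitely many $A^+$-generators — equivalently, that every path may be replaced by an acyclic one at the cost of an $A^+$-scalar. This is exactly where non-degeneracy enters. Given a path $\gamma\colon i\to j$ that revisits some vertex $k$, I would factor it as $\gamma=\beta c\alpha$ with $\alpha\colon i\to k$, a cycle $c\colon k\to k$, and $\beta\colon k\to j$. Writing $\alpha x_i=p x_k$, commutativity of $A$ gives $c\,\alpha x_i=p(c x_k)$, while non-degeneracy forces $c x_k=a_c x_k$ with $a_c\in A^+$; hence $\gamma x_i=a_c(\beta\alpha)x_i$, where $\beta\alpha$ is the strictly shorter path obtained by excising the cycle. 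Iterating on length, every $\gamma x_i$ equals $a\,\gamma' x_i$ for some acyclic $\gamma'$ and $a\in A^+$, so $M_{\leq x_i}$ is generated over $A^+$ by $\{\gamma' x_i : \gamma'\in\sh Q^\emptyset\text{ starting at }i\}$. Since $\sh Q$ is finite, the set $\sh Q^\emptyset$ of acyclic paths is finite, so this generating set is finite.

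Finally I would pass from principal lower hulls to arbitrary finite discs using lower saturation. A finite disc $D$ is $A^+$-generated by finitely many elements $v_k=b_k x_{i_k}$, and since $a v_k\leq v_k$ for $a\in A^+$ one checks $M_{\leq D}=\bigcup_k M_{\leq v_k}$; by lower saturation $M_{\leq v_k}=b_k M_{\leq x_{i_k}}$, which is finitely generated by the previous step, and a finite union of finitely generated $A^+$-submodules is finitely generated. Thus the lower hull of every finite disc is again a finite disc, which is precisely the assertion that $\leq$ is lower finite.

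I expect the main obstacle to be the cycle-elimination step: one must verify that an \emph{interior} cycle can genuinely be absorbed into an $A^+$-coefficient of the shortened path. This rests on commuting the cyclic scalar past the remainder of the path (using commutativity of $A$) together with the non-degeneracy hypothesis that this scalar lies in $A^+$ rather than merely in $A$ — without non-degeneracy a cycle could contribute a genuinely new, $A^+$-incomparable generator and the lower hull would be infinitely generated. Everything downstream — finiteness of $\sh Q^\emptyset$, and the passage from generators to finite discs — is then routine bookkeeping licensed by the finiteness of $\sh Q$ and by lower saturation.
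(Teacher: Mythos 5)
Your proposal is correct and takes essentially the same route as the paper: the paper's (inline) argument likewise uses non-degeneracy to conclude that any cycle in a presenting quiver acts on its cyclic factor by an element of $A^+$, so that by the formula (\ref{lower}) each lower hull is generated over $A^+$ by the finite set $\sh Q^\emptyset$ of acyclic paths. Your explicit cycle-excision computation and the reduction from arbitrary finite discs to principal lower hulls via lower saturation are precisely the details the paper leaves implicit.
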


All that remains to obtain an equivalence 
\[\text{lower finite} \qquad \Leftrightarrow \qquad \text{finitely presented } + \text{ non-degenerate} \]
is to show that lower finite orders are non-degenerate. We conclude this for \emph{domains} in \S\ref{MON_DOMAIN}.

\subsection{Primitives}\label{MON_PRIM}
Free modules on a po-module have the following distinguishing features:
\begin{itemize}\item they have unique irredundant primitive decompositions (cf. \S\ref{B_PRIM});
\item the set of primitive elements, together with $-\infty$, is an $A$-submodule.\end{itemize}
If, in general, these criteria are satisfied, we can by passing to primitive elements reverse the procedure and obtain a po-$A$-module from a $\B[A]$-module. This is possible for finite projective $\B[A]$-modules, by a partial converse to proposition \ref{MON_POMOD_THM}:

\begin{lemma}\label{MON_PRIM_THM}Every projective $\B[A]$-module is free on a partially ordered $A$-module.
\end{lemma}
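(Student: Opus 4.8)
The plan is to run the argument of theorem \ref{B_PRIM_THM} one level up, reconstructing a po-$A$-module from the $\vee$-primitives of the given projective module $\pi$ while keeping track of the $A$-action. Since $\pi$ is projective, realise it as a retract $\iota\colon\pi\rightarrow\B(F)$, $r\colon\B(F)\rightarrow\pi$ with $r\iota=\mathrm{id}$, of a free module, where $F$ is a free $A$-module (every free $\B[A]$-module is $\B(F)$ for such an $F$). Recall from \S\ref{MON_PRIM} that $\B(F)=\B(F;\leq_{A^+})$ is free on the divisibility po-$A$-module and hence has unique irredundant primitive decompositions, its primitives being exactly the principal discs $A^+v$ for $v\in F\setminus 0$; as $A^+$ is sharp these are identified with $F\setminus 0$, so that $\Prim\B(F)\cup\{-\infty\}=F$. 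Three things must then be checked: that $\pi$ has primitive decompositions, that its primitives are stable under the $A$-action, and that the resulting presentation is an isomorphism.

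For the first point, $\iota$ exhibits $\pi$ as a submodule of $\B(F)$, so the subobject lemma \ref{B_PRIM_SUB} gives that $\pi$ has primitive decompositions, i.e.\ that $q\colon\B(\Prim\pi,\leq)\rightarrow\pi$ is a surjection of underlying $\B$-modules. Moreover $r(\Prim\B(F))$ generates $\pi$, so by the characterisation \ref{B_PRIM_LEMMA} of primitives every $X\in\Prim\pi$ has the form $X=r(v)$ with $v\in F\setminus 0$; applying the $\B[A]$-linear map $r$ then gives $fX=r(fv)$ for all $f\in A$.

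The second point is the heart of the matter: I must show that $\Prim\pi\cup\{-\infty\}$ is an $A$-submodule of $\pi$, equivalently that multiplication by $f\in A$ carries $\vee$-primitives to $\vee$-primitives (or to $-\infty$). On $\B(F)$ itself this is automatic, since multiplication by $f$ is induced by the po-$A$-module endomorphism $v\mapsto fv$ of $F$ and so permutes the principal discs; the difficulty is to see that $\vee$-irreducibility of $fX=r(fv)$ survives the passage to the retract. I would attack this by feeding a hypothetical nontrivial decomposition $fX=\bigvee_i Y_i$ of $fX$ in $\pi$ back through $\iota$ and the idempotent $e=\iota r$, and using the uniqueness of the minimal generating set of a disc in $\B(F)$ — a consequence of the sharpness of $A^+$, exactly as in the monotone-section lemma \ref{B_POSET_SECTION} — to force one of the $Y_i$ to equal $fX$. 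This is the step I expect to be the main obstacle, precisely because $\vee$-primitivity is not preserved by arbitrary quotients and must be extracted from the retraction rather than from $\B$-module theory alone.

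Once $P:=\Prim\pi\cup\{-\infty\}$ is known to be a po-$A$-module, $q\colon\B(P,\leq)\rightarrow\pi$ is a surjection of $\B[A]$-modules, and projectivity of $\pi$ furnishes a $\B[A]$-linear section $\sigma$. The section-over-primitives lemma \ref{B_PRIM_SECTION}, applied with $S=\Prim\pi$, shows $\sigma X=S_{\leq X}$ for each primitive $X$; since $\sigma$ preserves joins and every element of $\pi$ is a join of primitives, $\sigma$ sends an arbitrary element to the lower hull of any of its primitive decompositions. Hence $\sigma$ is surjective, and being a section it is injective, so $q$ is an isomorphism and $\pi\cong\B(P,\leq)$ is free on the po-$A$-module $P$. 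This simultaneously yields the uniqueness and irredundancy of the primitive decompositions, so those properties require no separate verification.
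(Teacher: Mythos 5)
Your overall architecture (primitive decompositions via lemma \ref{B_PRIM_SUB}, a $\B[A]$-linear section from projectivity, lemma \ref{B_PRIM_SECTION}, and the join argument showing the section is surjective) is the same as the paper's, and that final step is carried out correctly. But the proof has a genuine gap, and you have located it yourself: the claim that $\Prim\pi\cup\{-\infty\}$ is an $A$-submodule is never proven, only announced as ``the heart of the matter'' with a proposed line of attack. Without it, $\B(\Prim\pi,\leq)$ is not a $\B[A]$-module, $q$ is not a morphism of $\B[A]$-modules, and projectivity of $\pi$ cannot be invoked to produce the section $\sigma$ --- so everything after that point is conditional. Moreover, the attack you sketch is unlikely to close the gap as stated: $\iota$ does not carry primitives of $\pi$ to primitives of $\B(F)$; rather $\iota(X)$ is in general a non-principal disc $A^+\{u_1,\dots,u_k\}$. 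So pushing a decomposition $fX=\bigvee_iY_i$ through $e=\iota r$ and invoking uniqueness of minimal generating sets only tells you that each minimal generator of $\iota(fX)$ lies in some $\iota(Y_i)$; this refines the decomposition but does not force any single $Y_i$ to equal $fX$.

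The paper sidesteps the problem rather than solving it. Instead of $S=\Prim\mu$ it takes $S=A\Prim\mu$, the $A$-submodule \emph{generated} by the primitives, which is $A$-stable by construction; then $\B(A\Prim\mu,\leq)\rightarrow\mu$ is a $\B[A]$-linear surjection, projectivity gives a $\B[A]$-linear section, and lemma \ref{B_PRIM_SECTION} (applied to $S=A\Prim\mu$) shows the section carries each primitive $X$ to $(A\Prim\mu)_{\leq X}$. The observation that replaces your missing step is the identity $(A\Prim\mu)_{\leq fX}=f\cdot(A\Prim\mu)_{\leq X}$: every element of $A\Prim\mu$ has the form $fX$ with $X$ primitive, so these principal lower sets generate $\B(A\Prim\mu,\leq)$ as a $\B[A]$-module (not merely as a $\B$-module), whence the image of the $\B[A]$-linear section is everything and $q$ is an isomorphism. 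The statement you tried to prove up front --- $A$-stability of $\Prim\mu$ --- then falls out \emph{a posteriori}, since the primitives of $\B(A\Prim\mu,\leq)$ are exactly the principal lower sets. If you want to salvage your write-up, replace $\Prim\pi$ by $A\Prim\pi$ throughout; your final paragraph then goes through almost verbatim once you note the generation-as-$\B[A]$-module point above.
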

\begin{proof}We have seen that free $\B[A]$-modules are, in particular, free $\B$-modules on a poset. By this and lemma \ref{B_PRIM_SUB}, any projective $\B[A]$-module has primitive decompositions.

Now let $\mu$ be projective, $A\Prim\mu\subseteq \mu$ the $A$-submodule generated by the primitive elements.  Then $A\Prim\mu$ generates $\mu$ as a $\B$-module, that is,
\[ \B(A\Prim\mu,\leq)\twoheadrightarrow\mu\]
is surjective. By projectivity of $\mu$, it has a $\B[A]$-linear section. By applying the lemma \ref{B_PRIM_SECTION} on sections over primitives to $S=A\Prim\mu$, the section surjects onto the set of elements of $\B(A\Prim\mu,\leq)$ of the form $(A\Prim\mu)_{\leq X}$ with $X\in\Prim\mu$.  Since these generate it as a $\B[A]$-module, it is an isomorphism.
(In particular, $A\Prim\mu=\Prim\mu$ is an $A$-submodule of $\mu$.)\end{proof}

\subsection{Domains}\label{MON_DOMAIN}

It remains to determine whether it is \emph{necessary} that $M$ be projective, that is, that only projective $A$-modules can yield projective $\B[A]$-modules. We have a complete result in the case that $A$ is an $\F_1$-domain.

\begin{defn}[Domain]A monoid pair $(A;A^+)$ is called an $\F_1$-\emph{domain} if $A^+\setminus 0$ is cancellative, $A$ is a localisation of $A^+$, and $A^+$ is integrally closed (i.e.\ saturated) in $A$.\end{defn}

I remind the reader that $A^+$ is always assumed to be sharp. Since $A^+$ is integrally closed in $A$, the multiplicative torsion of $A$ must belong to $A^+$ and therefore be trivial.

\begin{prop}\label{MON_POSET_PROJ}Let $M$ be a partially ordered $A$-module. 
\begin{enumerate}\item If $\B(M;\leq)$ is finitely generated over $\B[A]$, then $M$ is finitely generated over $A$.
\item Suppose $A$ is a domain. If $\B(M;\leq)$ is projective over $\B[A]$, then $M$ is free over $A$.\end{enumerate}\end{prop}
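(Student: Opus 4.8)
The plan is to treat the two parts separately, proving (i) directly from join-irreducibility and deducing (ii) by transporting a projective retract of $\B(M;\leq)$ onto its module of primitives, the domain hypothesis being exactly what makes that transport legitimate.

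For (i): recall that the primitive (join-irreducible) elements of $\mu:=\B(M;\leq)$ are precisely the principal lower sets $M_{\leq x}$, $x\in M$, and that $x\mapsto M_{\leq x}$ realises $M$ as an $A$-submodule of $\mu$; here I use that $\leq$ is an \emph{antisymmetric} partial order, so $M_{\leq x}=M_{\leq x'}$ forces $x=x'$. Suppose $\mu$ is generated over $\B[A]$ by finitely many fractional lower discs $D_1,\dots,D_n$, and expand each into its finitely many principal generators $D_j=\bigvee_\ell M_{\leq y_{j\ell}}$. Given $x\in M$, write $M_{\leq x}=\bigvee_k f_k D_{j_k}$ with $f_k\in A$; since $M_{\leq x}$ is primitive, Lemma \ref{B_PRIM_LEMMA} gives $M_{\leq x}=f_k D_{j_k}$ for a single $k$, and the identity $f\,M_{\leq y}=M_{\leq fy}$ rewrites this as $M_{\leq x}=\bigvee_\ell M_{\leq f_k y_{j_k\ell}}$. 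Applying join-irreducibility once more yields $M_{\leq x}=M_{\leq f_k y_{j_k\ell}}$, hence $x=f_k y_{j_k\ell}$. Thus the finite set $\{y_{j\ell}\}$ generates $M$ over $A$; no domain hypothesis is needed.

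For (ii): by the corollary to Proposition \ref{ALG_CYCLIC} a projective module over a domain is free, so it suffices to prove that $M$ is \emph{projective} over $A$, which I would do by exhibiting $M$ as a retract of a free $A$-module. Choose a generating family for $\mu$ consisting of primitives, i.e.\ a surjection $\rho:F:=\B(A^{(J)})\twoheadrightarrow\mu$ with $\rho(A^+e_j)=M_{\leq x_j}$ principal; then on primitives $\rho$ restricts to an $A$-linear surjection $p:A^{(J)}\to M$, $e_j\mapsto x_j$, because $\rho(A^+ae_j)=M_{\leq ax_j}$ is again primitive. Projectivity of $\mu$ furnishes a $\B[A]$-linear section $\iota$, which (as $\rho\iota=\id$ with $\rho$ monotone) is an order embedding. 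For each $x$, the disc $\iota(M_{\leq x})\in F$ maps under $\rho$ to the join-irreducible $M_{\leq x}$, so by Lemma \ref{B_PRIM_LEMMA} one of its principal generators, say $w(x)$, satisfies $\rho(A^+w(x))=M_{\leq x}$, i.e.\ $p(w(x))=x$. The assignment $x\mapsto w(x)$ is the candidate section.

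The content of the argument — and the only place the domain hypothesis is used — is showing that $w$ may be chosen coherently and $A$-linearly. Equivariance of $\iota$ gives $\iota(M_{\leq fx})=f\,\iota(M_{\leq x})$, and since $A$ acts injectively on $A^{(J)}$ it carries the principal generators of $\iota(M_{\leq x})$ bijectively onto those of $\iota(M_{\leq fx})$; thus $f\,w(x)$ is always an admissible value of $w(fx)$, and $A$-linearity reduces to \emph{uniqueness} of the generator lying over $x$. Uniqueness is the statement that there are no nontrivial relations $ax_j=a'x_{j'}$ among the chosen generators, equivalently that $p$ is injective. I would establish this in two steps, each leaning on the domain structure: first, the equational criterion for flatness (Proposition \ref{ALG_EQN}) applied to a putative relation $ax=bx$ or $ax=0$ factors it through a finite free $\B[A]$-module, and after localising at the single cyclic factor singled out by join-irreducibility the cancellativity and triviality of torsion force $a=b$ (resp.\ $x=0$), excluding stabilisers and module torsion; second, a genuine cross-relation between distinct generators would make two incomparable principal generators of some $\iota(M_{\leq x})$ both lie over $x$, and chasing this through the section together with the integral closure (saturation) of $A^+$ in $A$ yields a contradiction. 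Hence $p$ is injective, $M\cong A^{(J)}$ is free, and $\B(M;\leq)$ is free on a projective — indeed free — $A$-module.

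I expect this last step, ruling out cross-relations among a minimal generating family, to be the main obstacle, precisely because $\Prim$ is not functorial: the section $\iota$ need not respect primitives, and it is only the join-irreducibility of the targets $M_{\leq x}$ that lets one pin a relation down to a single coordinate, where the domain hypotheses (cancellation, vanishing torsion, saturation) can finally be brought to bear. Part (i) is routine by comparison.
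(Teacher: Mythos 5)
Part (i) of your proposal is correct and is essentially the paper's own argument: the primitives of $\B(M,\leq)$ are exactly the principal lower sets $M_{\leq x}$, so the irredundant primitive factors of any finite generating family, translated by $A$, exhaust $M$.

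Part (ii) is where the content lies, and both of the steps you defer are genuine gaps; the second is precisely where the paper's proof does its work. (a) Your stabiliser step fails as stated. Applying the equational criterion over $\B[A]$ to a relation $aM_{\leq x}=bM_{\leq x}$ produces components $u_k\in\B[A]$ with $au_k=bu_k$; but the $u_k$ are fractional ideals, not elements of $A$, and cancellation fails for ideals: the relation merely permutes the finitely many primitive generators of $u_k$, which yields $a^N=b^N$, and this does not imply $a=b$ in an $\F_1$-domain. Concretely, let $Q=\{(0,0)\}\cup\{(n,\epsilon):n\geq 1\}\subseteq\Z\times\Z/2$, a sharp cancellative monoid, and $A=A^+=\F_1[Q]$ (an $\F_1$-domain; the saturation condition is vacuous since $A=A^+$). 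The ideal $D$ generated by $(1,0)$ and $(1,1)$ satisfies $aD=bD$ for $a=(1,0)$, $b=(1,1)$, yet $a\neq b$ (and indeed $a^2=b^2$). This is exactly why the paper does \emph{not} run the criterion over $\B[A]$: it first converts projectivity of $\B(M)$ into left-exactness of $\B(M)\oplus_{\B[A]}-$, takes primitives to deduce that $M\tens_A-$ commutes with equalisers, and only then runs the equational-criterion argument over $A$ itself, where the factorisation lands in a free $A$-module, the image of the generator lies in a single cyclic factor, and cancellativity applies to honest elements of $A$. (b) Your cross-relation step is an intention, not an argument. What is needed is sharper than your observation that \emph{some} generator of $\iota(M_{\leq x})$ lies over $x$: the paper's key lemma states that the component $\sigma_i$ of the section over $px$ contains the tautological lift $x$ itself, and its proof is the finite-orbit/torsion argument (an $f$ fixing $px$ acts by an automorphism of the finite ideal $\sigma_ipx$, hence permutes its primitive generators, hence is torsion, hence equal to $1$ by sharpness and saturation). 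Given that lemma, a relation $f_iv_i=f_jv_j$ produces $e_i\in\sigma_jv_i$ and $e_j\in\sigma_iv_j$ with $f_ie_i=f_j$ and $f_je_j=f_i$, so $e_ie_j=1$ by cancellativity; projecting back down and using monotonicity gives $v_i\leq e_iv_j\leq v_i$, hence $v_i=e_iv_j$, contradicting irredundancy. None of this chain --- the tautological-lift lemma, the two-sided cancellation, the order sandwich --- appears in your sketch, and ``chasing this through the section yields a contradiction'' is exactly the claim that requires proof.
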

\begin{proof}\emph{i)} If $I\subseteq\B(M,\leq)$ is a generating set, then so too is the set $I^\prime\subset M$ of irredundant primitive factors of elements of $I$. Thus $AI^\prime\subseteq M$ generates $\B(M,\leq)$ as a $\B$-module, and therefore contains $M$. In particular, if $I$ is finite, then $I^\prime$ is finite and $M$ is finitely generated.

\

\emph{ii)} Let $p:\bigoplus_iA_i\twoheadrightarrow M$ be a surjection from a free module, where each $A_i\simeq A$ is free cyclic. By eliminating indices $i$ such that $pA_i$ is strictly contained in another $pA_j$, and identifying indices with the same image, we may assume that $p$ is an \emph{irredundant} generator, i.e. that no factor can be removed without destroying surjectivity of $p$.

By projectivity of $\B(M;\leq)$, there exists a section $\sigma$ to the induced projection
\[\B(\bigoplus_iA_i;A^+)\twoheadrightarrow\B(M;\leq). \]
Using the natural identification $\bigoplus_i\B(A_i)\cong\B(\bigoplus_iA_i)$, we will study $\sigma$ via its components
\[ \sigma_i:\B(M;\leq)\rightarrow\B(\oplus_iA_i;A^+)\stackrel{\mathrm{pr}_i}{\rightarrow}\B A_i. \]

\begin{lemma}\label{MON_DOMAIN_UNIQUE_GENS}If $x\in A_i$, then $x\in\sigma_ipv$.\end{lemma}
\begin{proof}By $A$-linearity, it will be enough to show this for $x$ a generator of $A_i$. Irredundancy implies that $px$ is not in the image of $A_j$ for any $j\neq i$. Thus $\sigma_ipx$ contains a lift $fx$ of $px$, and $f$ fixes $px$.

Applying $\sigma_i$ to the relation $fpx=px$ shows that $f$ acts by an automorphism of $\sigma_ipx$. Since $A^+$ is sharp, this automorphism must permute the finite set of primitive generators. In particular, the orbit of $fx$ is finite. Thus $f$ is torsion, therefore $1$.\end{proof}


\begin{lemma}\label{MON_POSET_LEMMA}$M$ is a direct sum of the cyclic modules $pA_i$.\end{lemma}
\begin{proof}Since $A_i$ and $A_j$ are free cyclic, there are identifications $A_i\simeq A\simeq A_j$. We will show that if $v_i,v_j$, are the images of $1$ in $M$ under $p$ with respect to these two identifications, then any relation of the form
\[ f_iv_i=f_jv_j \]
in $M$ will force $i=j$. 

By lemma \ref{MON_DOMAIN_UNIQUE_GENS}, applying $\sigma_j$ (resp. $\sigma_i$) to the relation shows that there exist $e_i\in\sigma_j v_i$ (resp. $e_j\in\sigma_iv_j$) such that
\[ f_ie_i=f_j, \quad f_i=f_je_j \]
in $A$, and so in particular $e_ie_jf_i=f_i$. By cancellativity of $A\setminus 0$, $e_ie_j=1$. 

Applying $p$ to the relation $\sigma_jv_i\leq\sigma v_i$ gives $e_iv_j\leq v_i$, and vice versa. We deduce \[v_i=e_ie_jv_i\leq e_iv_j\leq v_i\] in $M$, that is, $v_i=e_iv_j$. By irredundancy of $I$, $i=j$.\end{proof}

It remains to show that the cyclic factors of $M$ are free. This follows from a `high-brow' argument based on the fact that $\B$ is strongly monoidal and commutes with equalisers. 

Since $\B(M)$ is by hypothesis projective, the tensor sum $\B(M)\oplus_{\B[A]}-$ is left exact. By taking primitives, this implies that $M\tens_A-$ commutes with equalisers. It follows from the argument for the equational criterion for flatness \ref{ALG_EQN} that any relation
\[ A_i\underset{f_j}{\stackrel{f_i}{\rightrightarrows}} A_i\stackrel{p}{\rightarrow} M \]
in a cyclic summand of $M$ must be trivial.
\end{proof}



\begin{cor}[Classification]\label{MON_DOMAIN_FIRST}Let $A$ be an $\F_1$-domain. A $\B[A]$-module $\mu$ is finite projective if and only if:
\begin{enumerate}\item it has unique irredundant primitive decompositions;
\item $\Prim\mu$ is a free $A$-submodule of $\mu$, whose induced order is
\item non-degenerate, and
\item can be presented by a finite quiver.\end{enumerate}
Alternatively, the last condition iv) can be replaced by the two conditions
\begin{enumerate}\item[iva)] $M$ is a lower submodule of $M\tens_AK_A$, and
\item[ivb)] $(M,\leq)$ is finitely presented, that is, compact as an object of $\mathbf{PO}\Mod_A$.
\end{enumerate}\end{cor}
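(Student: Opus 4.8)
The plan is to assemble the equivalences developed throughout this section, the only genuinely new ingredient being the implication \emph{lower finite $\Rightarrow$ non-degenerate}, which is where the domain hypothesis does its work.

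First I would reduce both directions to a statement about free po-$A$-modules. Conditions \emph{i)} and \emph{ii)} assert exactly that $\Prim\mu$, together with $-\infty$, is a free $A$-submodule of $\mu$ carrying unique irredundant primitive decompositions; by the discussion of \S\ref{MON_PRIM} this is precisely the assertion that $\mu\cong\B(\Prim\mu,\leq)$ is free on the po-$A$-module $M:=\Prim\mu$, which is moreover free over $A$. Conversely, lemma \ref{MON_PRIM_THM} shows any finite projective $\mu$ arises this way, proposition \ref{MON_POSET_PROJ} \emph{ii)} identifies $M$ as free over $A$, and after replacing the generating set by an irredundant one (using \ref{MON_POSET_PROJ} \emph{i)} for finite generation) we may write $M=\bigoplus_{i\in I}A_i$ with $I$ finite. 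It therefore suffices to characterise when $\B(M,\leq)$ is finite projective for such $M$. With this reduction, proposition \ref{MON_POMOD_THM} tells us that $\B(M,\leq)$ is projective precisely when $(M,\leq)$ is lower finite and lower saturated, and proposition \ref{MON_DOMAIN_SECOND} translates lower saturation into presentability by a quiver. When $I$ is finite and the order is lower finite, the canonical quiver of \S\ref{MON_QUIVER} has finitely many vertices and finitely many edges departing each vertex, hence is finite, giving \emph{iv)}; conversely a finite quiver supplies both lower saturation and a finite presentation.

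The key step, and the main obstacle, is to show that over a domain a lower finite order is automatically non-degenerate; this is the one place cancellativity and integral closure are indispensable. Suppose the order is degenerate on some factor $A_i$, so that $M_{\leq x_i}\cap A_i\neq A^+x_i$: then there is $h\in A\setminus A^+$ with $hx_i\leq x_i$. Iterating the $A$-invariance of $\leq$ gives $h^nx_i\leq x_i$ for all $n\geq0$, so every $h^nx_i$ lies in the principal lower set $M_{\leq x_i}$. Because $A$ is a domain, its multiplicative torsion is contained in the sharp monoid $A^+$ and hence trivial; since $h\notin A^+$ it is non-torsion, so the powers $h^n$—and therefore the elements $h^nx_i$ of the free cyclic factor $A_i$—are pairwise distinct. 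This forces $M_{\leq x_i}$ to be infinite, contradicting lower finiteness. This completes the equivalence \emph{lower finite $\Leftrightarrow$ finitely presented $+$ non-degenerate} promised in \S\ref{MON_QUIVER}, whose converse direction is lemma \ref{MON_DOMAIN_NONDEGEN}.

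Assembling the pieces yields both directions: for the reverse implication, \emph{i)}--\emph{ii)} give $\mu=\B(M,\leq)$ with $M$ free, \emph{iv)} gives lower saturation and finite presentation, \emph{iii)} together with lemma \ref{MON_DOMAIN_NONDEGEN} upgrades this to lower finiteness, and proposition \ref{MON_POMOD_THM} then delivers projectivity, with finiteness coming from finiteness of the quiver. Finally, to see that \emph{iv)} may be replaced by \emph{iva)}$+$\emph{ivb)}, observe that \emph{ivb)} is finite presentation, so it remains only to match lower saturation with \emph{iva)}. Extending scalars along the localisation $A\to K_A$ identifies $M\tens_AK_A$ with $\bigoplus_iK_A$ carrying its divisibility order; under non-degeneracy the order on each factor of $M$ is the restriction of this order, and lower saturation—the stability of lower discs under multiplication by $A$—becomes exactly the condition that $M$ be downward closed, i.e.\ a lower submodule, in $M\tens_AK_A$. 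The care needed in setting up this last identification is the only subtlety beyond the non-degeneracy argument.
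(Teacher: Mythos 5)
Your overall architecture matches the paper's: reduce via lemma \ref{MON_PRIM_THM} and proposition \ref{MON_POSET_PROJ} to the question of when $\B(M,\leq)$ is projective for $M$ free over $A$, invoke proposition \ref{MON_POMOD_THM} to translate this into lower finiteness plus lower saturation, and then match these with conditions \emph{iii)}, \emph{iv)}, \emph{iva)}, \emph{ivb)} using propositions \ref{MON_DOMAIN_SECOND} and \ref{MON_DOMAIN_NONDEGEN}. But your argument for the key step --- lower finite $\Rightarrow$ non-degenerate --- has a genuine gap. You argue: a degenerate relation $hx_i\leq x_i$ with $h\notin A^+$ gives pairwise distinct elements $h^nx_i\in M_{\leq x_i}$, so $M_{\leq x_i}$ is infinite \emph{as a set}, contradicting lower finiteness. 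This conflates two meanings of ``finite.'' In this paper a ``finite disc'' means a \emph{finitely generated} $A^+$-submodule, not a submodule of finite cardinality (otherwise $\B[A;A^+]$ would not even contain its own unit $A^+$); lower finiteness asserts that the lower hull of a finitely generated disc is again finitely generated. A lower set can perfectly well be infinite as a set while being a finite disc, so no contradiction is obtained.

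The failure is not cosmetic: your argument uses only cancellativity and sharpness, never the integral closure of $A^+$ in $A$, and the implication is false without that hypothesis. Take $A^+=\F_1[t^2,t^3]$ (sharp, cancellative) inside its localisation $A=\F_1[t^{\pm1}]$, and let $M=A$ with the order generated by $t\leq 1$. Then $M_{\leq 1}=A^+\cup tA^+$ is a finitely generated $A^+$-submodule, so the order is lower finite (and lower saturated), yet it is degenerate since $t\notin A^+$. Your argument would ``rule this out,'' so it proves too much. The paper's own proof is exactly the repair: containment of the whole set $\{h^n\}_{n\in\N}$ in a finitely generated $A^+$-submodule is precisely the statement that $h$ is \emph{integral} over $A^+$, and only then does the hypothesis that $A^+$ is integrally closed (saturated) in $A$ force $h\in A^+$, contradicting degeneracy. (A smaller issue of the same flavour: in your treatment of $\mathrm{iva)}$ you endow $M\tens_AK_A$ with the $K_A$-divisibility order, but the relevant order there is the one extending the given order on $M$; the paper instead phrases this via the condition $M_{\leq y}=M\cap M[f^{-1}]_{\leq y}$ for all $f\neq 0$.)
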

\begin{proof}By lemma \ref{MON_PRIM_THM}, a projective module $\mu$ is free on $(\Prim\mu,\leq)$, and by proposition \ref{MON_POSET_PROJ} $\Prim\mu$ is itself finite projective. By proposition \ref{MON_POMOD_THM}, the order on $\Prim\mu$ is lower finite and lower saturated; \emph{loc.\ cit.}\ also handles the converse.

The remaining meat of the theorem is therefore that lower finiteness and lower saturation equate to conditions \emph{iii)} and \emph{iv)}, and that \emph{iv)} is equivalent to \emph{iva)}$+$\emph{ivb)}. More precisely, lower saturation is equivalent to \emph{iva)}, and lower finiteness is equivalent to non-degeneracy together with finite presentation.

\

($\mathrm{ls}\Leftrightarrow\mathrm{iva})$) This follows from unravelling explicitly the definition of lower saturation: for all $x,y\in M$ and $f\in A$ such that $x\leq fy$, there exists a symbol $[x/f]\in M_{\leq y}$ such that $[x/f]f=x$; \[ x\leq fy \quad \Rightarrow \quad \exists x/f \quad x/f\leq y. \] When $M\cong \bigoplus_{i\in I}A_i$ is a direct sum of cyclic modules, it follows that every relation can be deduced from one of the form $\tilde x\leq y$ with $y$ a \emph{generator} of its cyclic factor.

When $f$ is invertible on $M$, the symbol $[x/f]$ automatically exists and is unique: $[x/f]=xf^{-1}$. More generally, when $f:M\rightarrow M$ is injective, then $[x/f]$ is unique if it exists, and the condition only asks that
\[ M_{\leq y}=M\cap M[f^{-1}]_{\leq y}. \]
When $A$ is a domain and $M$ is free, this works for all $f\neq 0$, hence the result.

\

($\mathrm{lf}\Leftrightarrow\mathrm{ndg}+\mathrm{fp}$) After lemmas \ref{MON_DOMAIN_SECOND} and \ref{MON_DOMAIN_NONDEGEN}, it remains to show that lower finiteness implies non-degeneracy. Any degenerate relation $fx\leq x$ in $M$ entails an infinite sequence of relations $f^nx\leq x$. Since $M$ is free, we may localise our study to a cyclic factor and assume $M=A$, and by lower saturation, that $x=1$.

The condition that the set $\{f^n\}_{n\in\N}$ be contained in a finite $A^+$-submodule is precisely that $f$ be \emph{integral} over $A^+$. Since we have assumed that $A^+$ is integrally closed in $A$, non-degeneracy is a necessary condition for lower finiteness.\end{proof}

\begin{remark}[Duality over $\F_1$]\label{MON_DUALITY}There is in general no good theory of duality for modules over $\F_1$-algebras - since $A^{\oplus n}\not\simeq A^n=\Hom(A^{\oplus n},A)$ as soon as $n>1$, free modules are typically not reflexive. However, in light of corollary \ref{MON_DOMAIN_FIRST}, at least for certain po-modules over a domain $A$ we can `borrow' the theory of duality for finite projective $\B[A]$-modules, and define a restricted dual
\[ M^*:= \Prim(\Hom(\B M,\B A)). \]
The result implies that if $M$ is a lower finite and lower saturated partially ordered, finite, free $A$-module, then so is $M^*$, and that this operation is an involution (though still not actually a duality with respect to $\tens_A$) on such modules.
\end{remark}

We end this section with some counterexamples.

\begin{eg}[Projective {$\B[A]$}-module with non-projective $A$-module of primitives]A more involved argument in lemma \ref{MON_POSET_LEMMA} shows that the result holds for $M$ with the divisibility order whenever $A$ is idempotent-free. The same line of thought yields a counterexample with idempotents to part \emph{ii)} of proposition \ref{MON_POSET_PROJ}. Let
\[ A=A^+:= \F_1[f_i,e_i]_{i=1,2} / (e_i^2e_j=e_i, e_if_i=f_j)_{i\neq j}.\] The idempotent $\xi=e_1e_2$ fixes all elements except $0$ and $1$, and multiplication by $f_i$ is injective on its image - so induces an isomorphism between the fixed set of $\xi$ and the principal ideal $(f_1)=(f_2)$.

Now define a module $M$ over $A$ with two generators and the relations
\[ (f_1,0)=(0,f_2),\quad (e_2,0)=(0,\xi),\quad (\xi,0)=(0,e_1). \]
Both of the generating cyclic submodules are free, but $M$ itself is not, so by proposition \ref{ALG_CYCLIC} it cannot be projective. Note also: 
\begin{itemize}\item the fixed set of $\xi$ on $M$ is isomorphic to the fixed set of $\xi$ of $A$ itself; in particular, it is projective cyclic;
\item the base change of $A$ to $\Z$ splits as a product $\Z\times\Z[e_1^{\pm 1}]$, whereupon $M$ becomes the union of a trivial line bundle over the $\G_m$ part and a rank two free module over the point - in particular, it is projective.\end{itemize}

I claim that $\B(M;A)$ is a projective $\B[A]$-module. Indeed, a section to the given map $\B[A]^2\twoheadrightarrow\B(M)$ is given
\[ \sigma(1,0):=(1,0)\vee(0,e_1) \]
and vice versa. I leave it to the reader to check that this map is indeed a section and obeys the relations of $M$.

When working with monoids one somewhat inevitably encounters pathological examples like this one, as these kinds of relations can arise after a simple sequence of relative tensor products between finite free monoids $\simeq \N^m$.\end{eg}

\begin{eg}[A finite, non-reflexive $\Z_\vee$-module]
Note that since finite $\B[A]$-modules no longer have finite underlying sets, they need not have primitive decompositions. 

For example, take $\Z_\vee^2/(e_1+\lambda\vee e_2=e_1)$, where $\lambda\leq 0$. The only primitives in this module are the translates of $e_2$; in particular, it is not generated by primitives. It follows that it cannot be embedded in a free module, and is in particular non-reflexive.\end{eg}

\begin{eg}[Finitely generated, flat, but non-projective $\Z_\vee$-module]Let $\Gamma\subseteq\Gamma^\prime$ be an extension of totally ordered Abelian groups. Then $\Gamma^\prime_\vee$ is a flat $\Gamma_\vee$-module.

\begin{proof}We will prove directly the equational criterion (prop. \ref{ALG_EQN}). Let $v:\Gamma_\vee^n\rightarrow\Gamma^\prime_\vee$ be a $\Gamma$-linear homomorphism, $\Gamma_\vee\rightrightarrows\Gamma^n_\vee$ a relation. These data are determined by a set of (up to) $n$ elements $\{v_i\}_{i=1}^n\subseteq\Gamma^\prime$ and a relation
\[ \max\{v_i+f_i\}=\max\{v_i+g_i\}_{i=1}^n\in \Gamma^\prime \]
with $(f_i),(g_i)\in\Gamma^n$.
Let $i_f$, resp. $i_g$, be the index for which the maximum on the left (resp. right) is realised. If $i_f=i_g$, then $f_{i_f}=g_{i_g}$ and the projection $\Gamma^n\rightarrow\Gamma$ on the $i_f$th factor factors $v$. Otherwise, we may identify the $i_f$th and $i_g$th factor via the primitive of the given relation, and project out the other co-ordinates.
\end{proof}

On the other hand, if the rank of $\Gamma^\prime$ is strictly greater than that of $\Gamma$, then submodules generated by $n>1$ elements are in general infinitely presented, and so cannot be projective. For instance, this applies to the $\Z_\vee$-submodule of $\Z^2_\mathrm{lex}$ generated by $(0,0)$ and $(0,1)$.
\end{eg}

\section{Polyhedra}\label{POLY}
In this section, we will work with additively notated monoids without zero. The associated fractional submodule functor $\B$ factors through adjoining zero. Cf.\ \S\ref{ALG_PTS}.

\subsection{Weight polyhedra}\label{POLY_WEIGHT}
Let $H$ be a totally ordered group - for example, an additive subgroup of $\R$. We will study finite modules over the semifield
\[ H_\vee = \B[H;H^\circ] \cong H\sqcup\{-\infty\} \]
via partially ordered $H$-modules (or `$H$-acts'). This setting enjoys two substantial simplifications over the general case:
\begin{itemize}\item since $H$ is a group, lower saturation is automatic (cf. part \emph{iva} of corollary \ref{MON_DOMAIN_FIRST});
\item since $H$ is totally ordered, non-degeneracy is automatic;\end{itemize}
and so all we have to worry about is finite presentation.

When $M\simeq H^{\sqcup n}$ is free, the dual $H$-module
\[ \V(M):=\Hom_H(M,H)\simeq H^n \]
is an $H$-affine space - that is, a torsor for $\Hom_\Z(\Lambda,H)$ with $\Lambda$ a free $\Z$-module of rank $n$ - whose co-ordinate functions, up to translation, are the elements of $M$. 

When $M$ is free as an $H$-module but carries a possibly non-trivial partial order, it embeds in $\V(M)$ as the subset
\[ \V(M,\leq):=\Hom_{\mathbf{PO}\Mod_H}(M,H) \]
 cut out by relations defining the partial order of $M$. When $(M,\leq)$ is finitely presented, it is a polyhedral subset defined by inequalities between co-ordinates.


We formalise the sets arising in this way as follows:

\begin{defns}\label{POLY_WEIGHT_DEF}Let $\Phi\subset V$ be a root system in a Euclidean space with a weight lattice $V_\Z$. An $H$-integral \emph{weight polyhedron} $(\Delta,N,V,\Phi)$ for $\Phi$ is a finite, non-empty intersection of half-spaces orthogonal to the roots, and having co-ordinates in $H$, in an affine space for $\Hom_\Z(V_\Z, H)$.

A \emph{general linear} $H$-integral weight polyhedron is a weight polyhedron with $\Phi$ the root system of the general linear group $\GL_n$ of a finite set $n$.\footnote{That is, the general linear group of a vector space with a direct sum decomposition into lines indexed by $n$. There is then a canonical Cartan subgroup which is naturally identified with $\G_m^n$.} The character lattice $V_\Z$ of $\GL_n$ has a canonical unordered basis $V_\Z\cong\Z^n$. Let us call the elements of this basis the \emph{fundamental weights}. The roots are the differences of pairs of distinct fundamental weights. 

Any $\GL_n$ weight polyhedron $\Delta$ is invariant under the diagonal action of $H$; quotienting by this action one obtains an $\SL_n$ weight polyhedron $\P\Delta$.

An affine map $(\Delta_1,N_1,V_1,\Phi_1)\rightarrow(\Delta_2,N_2,V_2,\Phi_2)$ of general linear weight polyhedra is an $H$-equivariant map $\psi:N_1\rightarrow N_2$ such that $\psi(\Delta_1)\subseteq\Delta_2$ and $d\psi:V_2\rightarrow V_1$ preserves the set of fundamental weights (i.e. is the linear extension of a map $I_2\rightarrow I_1$). An affine map of $\GL_n$ weight polyhedra preserves the centre $\G_m\subseteq\GL_n$ and descends under $\P$ to an affine map of $\SL_n$ weight polyhedra preserving the root system $\Phi\sqcup\{0\}$.

The category of general linear weight polyhedra is denoted $\mathbf{Poly}_H^\GL$. By diagonal invariance of $\GL$ weight polytopes, the mapping sets 
\[ \Aff_H^\GL(\Delta_1,\Delta_2) = \Hom_{\Poly_H^\GL}(\Delta_1,\Delta_2) \] are also $H$-modules. By replacing $\Delta$ with its affine span, a $\GL_n$ weight polyhedron may always be assumed $n$-dimensional.\end{defns}

Note that this definition is more relaxed than the usual notion of weight polytope, since there is no requirement of Weyl group symmetry. When the Weyl group does act, a weight polytope in the sense of definition \ref{POLY_WEIGHT_DEF} is \emph{polar} to the usual weight polytope.

\begin{eg}\label{POLY_WEIGHT_EG}If $\Delta_1$ is one-dimensional (i.e. a $\GL_1$ weight polyhedron), then $\Delta_1\simeq H$ as an $H$-act and the differential of any map $\Delta_1\rightarrow\Delta_2$ is fixed to be the map $\Z^{I_2}\rightarrow\Z$ sending all basis vectors to $1$. To specify such a map, it is therefore enough to say where a single point $0\in\Delta_1$ goes. In particular, $\Aff_H^\GL(H,\Delta_2)=\Delta_2$.

Conversely, a map $\Delta_2\rightarrow\Delta_1$ is necessarily a co-ordinate projection. In particular, the set $\Aff_H^\GL(\Delta_2,H)=:\Aff_H^\GL(\Delta_2)$ of such is a finite $H$-module.\end{eg}

\paragraph{Co-ordinate functions on a weight polyhedron}Let $N$ be a torsor under $\Hom_\Z(V_\Z,H)$. Then in particular $N$ is a weight polyhedron, and
\[ \Aff_H^\GL(N):=\Hom_{\Poly_H^\GL}(N,H)\simeq H^{\sqcup n} \]
is the set of $H$-affine functions whose differentials are fundamental weights. It is a finite free $H$-module indexed by the set of fundamental weights, functorial for $H$-affine maps.

Now let $\Delta\subseteq N(H)$ be a weight polyhedron, and suppose $\Delta$ is not contained in any proper affine subspace of $N$, i.e.\ $\dim\Delta=\dim N$. Then
$\Aff_H^\GL(N)$ acquires a partial ordering based on the values of functions on $\Delta$. Let us call this partially ordered module $\Aff_H^\GL(\Delta)$. It is functorial for maps of weight polyhedra.

This defines a contravariant functor
\[ \Aff_H^\GL:\Poly_H^\GL\rightarrow\mathbf{PO}\Mod^\mathrm{free}_H  \]
into the category of partially ordered free $H$-modules. As an \emph{a posteriori} consequence of theorem \ref{POLY_WEIGHT_THM}, we will see that in fact $\Aff_H^\GL(\Delta)$ is always finitely presented.

\paragraph{Dual to a partially ordered $H$-module}Dualising an object $(M,\leq)$ of $\mathbf{PO}\Mod_H$ with respect to the unit object yields an embedding $\V(M,\leq)\subseteq\Hom_H(M,H)$ into an $H$-affine space; we also saw that if $(M,\leq)$ is finitely presented, $\V(M,\leq)$ is a general linear weight polyhedron. A module homomorphism $M_1\rightarrow M_2$ yields a pullback map $\Hom_H(M_2,H)\rightarrow\Hom_H(M_1,H)$ of the ambient affine spaces whose differential is a permutation.

Thus $\V$ defines a contravariant functor
\[ \mathbb V:\mathbf{PO}\Mod_H^\mathrm{pf/free}\rightarrow \Poly_H^\GL \]
on the category of finite free $H$-modules with finitely presented partial order. (Note that since $H$ is totally ordered, po-$H$-modules are automatically non-degenerate.)

\

Evaluation gives us maps
\[ 1\rightarrow \Aff_H^\GL\V, \quad 1 \rightarrow \V\Aff_H^\GL \]
that put $\V$ and $\Aff_H^\GL$ into adjunction.

\begin{thm}\label{POLY_WEIGHT_THM}The adjoint functors $\V,\Aff_H^\GL$ restrict to an anti-equivalence
\[ \Poly_H^\GL \cong \mathbf{PO}\Mod_H^\mathrm{pf/free} \]
 between the category of $\GL$ weight polyhedra and the category of finitely presented and non-degenerate partially ordered free $H$-modules.\end{thm}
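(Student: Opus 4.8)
The plan is to exploit that $\V$ and $\Aff_H^\GL$ are already a dual adjunction via the two evaluation natural transformations $\eta\colon 1\to\Aff_H^\GL\V$ and $\epsilon\colon 1\to\V\Aff_H^\GL$ displayed just above the statement. A dual adjunction restricts to an anti-equivalence between the full subcategories of objects at which $\eta$, respectively $\epsilon$, is invertible, so the entire content of the theorem is that \emph{every} object of $\mathbf{PO}\Mod_H^\mathrm{pf/free}$ is fixed by the unit and \emph{every} object of $\Poly_H^\GL$ is fixed by the counit. I would prove each statement by separating the underlying free-module (respectively affine-space) structure, which is recovered formally, from the partial order (respectively polyhedral) structure, which carries all the substance.

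For the unit I would first forget orders: the affine space underlying $\V(M,\leq)$ is $\Hom_H(M,H)$, whose co-ordinate functions are by construction the elements of the free module $M$, and $\Aff_H^\GL$ of an affine torsor returns exactly this module of co-ordinate functions; hence $\eta_{(M,\leq)}$ is already an isomorphism of underlying free $H$-modules, and only the orders must be matched. Invoking the quiver presentation of \S\ref{MON_QUIVER}, the order on $M$ is the $H^\circ$-linear transitive closure of a finite set of relations $f\,x_i\le x_j$ (with $f\in H$ and $x_i,x_j$ generators), computed by path weights in a finite quiver $\sh Q$; meanwhile $\V(M,\leq)$ is cut out inside $\Hom_H(M,H)$ by the corresponding difference half-spaces $\phi(x_j)-\phi(x_i)\ge f$, and the recovered order compares co-ordinates by their values on $\V(M,\leq)$.

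The crux is therefore a shortest-path (Farkas-type) duality for systems of difference constraints over the totally ordered group $H$: for each pair of generators,
\[ \max\{\,w(\gamma)\mid \gamma\text{ a path }i\to j\text{ in }\sh Q\,\}=\min_{\phi\in\V(M,\leq)}\bigl(\phi(x_j)-\phi(x_i)\bigr). \]
The inequality $\le$ is the tautology that the defining half-spaces hold throughout $\V(M,\leq)$; the reverse inequality says that any difference bound valid on the whole feasible region is a consequence of the defining ones along a nonnegative path. Non-degeneracy --- automatic here, since $H$ is totally ordered --- means $\sh Q$ has no positive-weight cycle, which simultaneously guarantees $\V(M,\leq)\neq\emptyset$ and forces the optimum to be attained along an \emph{acyclic} path; finiteness of $\sh Q$ then makes the set of acyclic paths finite, so the maximum is a genuine element of $H$. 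As the original order sets $g\,x_i\le x_j$ precisely when $g$ does not exceed a path weight, the displayed identity shows the recovered order coincides with it, and $\eta$ is an isomorphism.

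The counit I would handle dually. Replacing $\Delta$ by its affine span we may assume $\dim\Delta=\dim N=:n$, so $\Aff_H^\GL(\Delta)\cong H^{\sqcup n}$ is free on the fundamental weights, ordered by values on $\Delta$, and $\epsilon_\Delta$ sends a point of $\Delta$ to the evaluation of co-ordinates there. Injectivity is immediate since co-ordinate functions separate the points of an $n$-dimensional polyhedron. For surjectivity one uses that every facet of a weight polyhedron is orthogonal to a root $e_i-e_j$, so $\Delta$ is exactly the solution set of a finite system of difference constraints; the order on $\Aff_H^\GL(\Delta)$ records precisely the tightest constants $\min_\Delta\bigl(\phi(x_j)-\phi(x_i)\bigr)$ of that system, and a monotone functional is by definition a point obeying all of them, which by the same difference-constraint closure is exactly a point of $\Delta$. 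Being $H$-affine with identity differential, $\epsilon_\Delta$ is then an isomorphism of polyhedra, and functoriality and naturality are routine once one observes that morphisms on each side have differential permuting the fundamental weights, dual to a permutation of cyclic factors, so the objectwise isomorphisms assemble into the stated anti-equivalence. The hard part will be establishing the displayed duality over an \emph{arbitrary} totally ordered group $H$ rather than over $\R$: one must avoid real-analytic linear programming and argue purely combinatorially, using non-degeneracy to reduce every optimum to an acyclic path so that all quantities stay in $H$ and every supremum is attained --- with the consistent bookkeeping of sign and direction conventions across the quiver order, its half-space presentation, and the two evaluation maps being the other point that requires care.
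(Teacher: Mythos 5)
Your proposal follows essentially the same route as the paper: the counit is handled by the same observation (a monotone functional on $\Aff_H^\GL(\Delta)$ must obey the inequalities cutting out $\Delta$, hence is a point of $\Delta$), and the unit is reduced, via the quiver presentations of \S\ref{MON_QUIVER}, to exactly the statement the paper isolates as Lemma \ref{POLY_WEIGHT_MAINLEMMA} — your max--min ``shortest-path duality'' is that separation lemma in linear-programming form. The one step you defer as ``the hard part'' is precisely what that lemma's proof supplies, by the construction you anticipate: on each cyclic factor $H_i$ reachable from $dG$ one fixes the identification by $\phi_i\bigl(\max_{\gamma\colon dG\to i}\gamma G\bigr):=0$, a longest-acyclic-path potential that stays in $H$ and is consistent by non-degeneracy and finiteness of the acyclic path set, together with a case split for factors that only map to $dG$ (shift by a small $\epsilon>0$) and factors not connected to $dG$ (values chosen freely). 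So your framing is correct and would compile into the paper's argument once that half-page combinatorial construction is written out.
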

\begin{proof}Let $\Delta\subseteq N$ be a weight polyhedron. A monotone homomorphism $\Aff_H^\GL(\Delta)\rightarrow H$ is a point $p$ of $N$ such that for every pair of affine functions $F,G$ on $V$, $F|_\Delta \leq G|_\Delta$ implies $F(p)\leq G(p)$. Since $\Delta$ is cut out from $N$ by such inequalities, $p\in\Delta$.

The more subtle part of the proof is to show that when $(M,\leq)$ is finitely presented, conversely, the elements of $\V(M)$ `separate points' of $M$. More precisely:

\begin{lemma}[Constructing monotone functionals]\label{POLY_WEIGHT_MAINLEMMA}Let $M\in\mathbf{PO}\Mod^\mathrm{free}_H$ be finitely presented. For any $F\not\leq G\in M$ there exists a monotone homomorphism $\phi_{F,G}:M\rightarrow H_\vee$ such that $\phi_{F,G}G=0$ and $\phi_{F,G} F>0$.\end{lemma}
\begin{proof}By non-degeneracy, a homomorphism $M\rightarrow H$ is monotone if and only if it obeys the relations of the acyclic path set $\sh Q^\emptyset$ of a presenting quiver $\sh Q$. In other words, if $\phi_GG=0$, then we must produce for each cyclic factor $H_i$ an identification $\phi_i:H_i\tilde\rightarrow H$ such that 
\begin{align}\label{arooga} \phi_i\gamma G\leq 0\leq\phi_i\tilde\gamma G \end{align}
for all $\gamma$ in $\sh Q^\emptyset$ from $dG$ to $i$, resp. $\tilde\gamma$ from $i$ to $dG$. Variables whose differentials are not connected to $dG$ in $\sh Q$ may be set arbitrarily.

We distinguish three cases for $F$:

[$dG\rightarrow dF$] Since $H_\vee$ is totally ordered, we can define our identifications by
\[ \phi_i\left(\max_{\gamma:dG\rightarrow i}\gamma G\right):=0. \]
Since, by non-degeneracy, $\max_{\gamma:dG\rightarrow i}\gamma < \tilde\gamma G$ for any $\tilde\gamma:i\rightarrow dG$, the other relations $0<\phi_{G,i}\tilde\gamma G$ are also satisfied.

[$dG\not\rightarrow dF$, $dF\rightarrow dG$] There exists an extension $\tilde\phi_i$ of $\phi_G$ to the factor of $F$. If $\tilde\phi_i(F)\leq 0$, replace it with $\phi_i=\tilde\phi_i-\tilde\phi_i(F)+\epsilon$ where $\epsilon>0$.

[$dF\not\leftrightarrow dG$] We may set $\phi_iF>0$ arbitrarily.\end{proof}

It follows that if $F\leq G$ on $\V(M)\subseteq H^n$, then in fact $F\leq G$ in $M$. In other words, the unit
\[ M\rightarrow \Aff_H^\GL(\V(M)) \]
is an isomorphism of partially ordered modules. In particular, the po-$H$-module associated to a weight polyhedron is finitely presented.\end{proof}

Combining this with the classification theorem \ref{MON_DOMAIN_FIRST}, we obtain a faithful and conservative contravariant embedding
\[ \Poly_H^\GL\hookrightarrow\Mod_{H_\vee} \]
that surjects onto the set of non-zero finite projectives.

\paragraph{Extended weight polyhedron}By duality of finite projective $H_\vee$-modules, the category of weight polyhedra can also be embedded \emph{covariantly} into $\Mod_{H_\vee}$. This inclusion can be realised geometrically by adding strata at infinity directly to a weight polyhedron $\Delta$.

A one-dimensional $H$-affine space $N$ has a natural extension to an invertible $H_\vee$-module $\bar N:=N\sqcup\{-\infty\}$. The same logic allows us to `compactify' a finite product of lines to a free $H_\vee$-module. In terms of functions, it admits the description
\[ \bar N=\bar\V(\Aff_H^\GL(N)):=\Hom_H(\Aff_H^\GL(N),H_\vee); \]
in particular, $\bar N$ is a free $H_\vee$-module.

If now $\Delta\subseteq N$ is a $\GL$ weight polyhedron, then we can define the `closure' $\bar\Delta$ of $\Delta/\bar N$ by extending the inequalities defining $\Delta$ to $\bar N$:
\[ \bar\Delta=\bar\V(\Aff_H^\GL(\Delta),\leq) := \Hom_{\mathbf{PO}\Mod_H}(\Aff_H^\GL(\Delta),H_\vee). \]
If $H=\R$, then this is the same as the topological closure of $\Delta$ in $\bar N$ with respect to the order topology on $\R$. The resulting object is an \emph{extended} general linear weight polyhedron.


If $\Delta$ is laterally compact - meaning that the associated $\SL_n$ polyhedron $\P(\Delta)$ is bounded - then the  extension only affixes the single point $-\infty$. More generally, each stratum of $\bar\Delta$ is a $\GL$ weight polyhedron in the stratum of $\bar N$ that contains it.

In general, extended weight polyhedra are dual to finitely presented po-modules over the pointed version $\F_1[H]$ of $H$. For this reason, we adopt the convention that $\{-\infty\}$ is also an extended weight polyhedron.

\paragraph{Mapping sets}The covariant embedding of $\Poly_H^\GL$ in $\Mod_{H_\vee}$ allows us to give a more geometric description of the $H_\vee$-module of homomorphisms between projective modules. 

We define an affine map $\bar\Delta_1\rightarrow\bar\Delta_2$ between extended weight polyhedra to be an affine map - in the sense of the category $\Poly_H^\GL$ - from $\Delta_1$ into a stratum of $\bar\Delta_2$, that is,
\[ \Hom_H^\GL(\bar\Delta_1,\bar\Delta_2):=\coprod_{\Delta_2/\rho}\Hom_H^\GL(\Delta_1,\Delta_2/\rho) \]
where $\rho$ ranges over faces of the recession cone of $N_-\cap\Delta_2$, $N_-$ a negative orthant in $N_2$. Such maps are exactly those arising from homomorphisms of the extended dual $\F_1[H]$-modules $\Hom_H^\GL(\Delta_i,H)\tens_H\F_1[H]$.

The inclusion
\[ \Hom_H\left(\Aff_H^\GL(\Delta_2),\hspace{3pt} \Aff_H^\GL(\Delta_1)\tens_H\F_1[H]\right)\subseteq \Hom_{H_\vee}(\B\Aff_H^\GL(\Delta_2),\hspace{3pt}\B\Aff_H^\GL(\Delta_1))  \]
is generating as a $\B$-module; applying theorem \ref{POLY_WEIGHT_THM} on the left and duality for projective $\B[A]$-modules on the right, the same is true of
\[ \Hom_H^\GL(\bar\Delta_1,\bar\Delta_2)\subseteq \Hom_{H_\vee}(\bar\Delta_1,\bar\Delta_2). \]
It follows that $\Hom_{H_\vee}(\bar\Delta_1,\bar\Delta_2)$ is the set of convex piecewise affine maps $\Delta_1\rightarrow\bar\Delta_2$ whose linear parts preserve the fundamental weights.

\begin{cor}\label{POLY_WEIGHT_COR}The duality of theorem \ref{POLY_WEIGHT_THM} extends $\B$-linearly to a duality 
\[ \Mod_{H_\vee}^\mathrm{f/proj}\cong \Poly_{H,\vee}^\GL \]
between the category of finite projective $H_\vee$-modules and the category of extended $\GL$ weight polyhedra and convex, piecewise-affine maps whose linear parts are fundamental weights.\end{cor}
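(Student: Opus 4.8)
The plan is to present the corollary as the $\B$-linear, compactified repackaging of the anti-equivalence of theorem \ref{POLY_WEIGHT_THM}, so that almost all of the work is assembly of facts already in hand. On objects, I would start from the classification corollary \ref{MON_DOMAIN_FIRST} specialised to $H$: since $H$ is a totally ordered group, lower saturation and non-degeneracy are automatic, so every nonzero finite projective $H_\vee$-module $\mu$ is free on its po-$H$-module of primitives $\Prim\mu$ (lemma \ref{MON_PRIM_THM}), which is a finitely presented, non-degenerate, free po-$H$-module, and conversely. Theorem \ref{POLY_WEIGHT_THM} then dualises these core po-modules to $\GL$ weight polyhedra via $\V$ and $\Aff_H^\GL$. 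The one genuinely new point at the object level is the passage from $\Prim\mu$ to $\mu=\B(\Prim\mu,\leq)$: I would verify that under $\V$ this corresponds exactly to the compactification $\Delta\mapsto\bar\Delta$ that affixes the strata at infinity, using the identification $\bar\Delta=\Hom_{\mathbf{PO}\Mod_H}(\Aff_H^\GL(\Delta),H_\vee)$ together with $\Aff_H^\GL\V\cong\iden$. The zero module is matched to $\{-\infty\}$ under the stated convention, which yields essential surjectivity onto all finite projectives.

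On morphisms, I would invoke the mapping-set analysis preceding the statement. The generating inclusion $\Hom_H^\GL(\bar\Delta_1,\bar\Delta_2)\subseteq\Hom_{H_\vee}(\bar\Delta_1,\bar\Delta_2)$ exhibits the $H_\vee$-hom module as $\B$-generated by the $\GL$-affine maps; since the join in $H_\vee$ is pointwise maximum, an arbitrary element is a finite join of affine maps, that is, precisely a convex piecewise-affine map whose linear part preserves the fundamental weights. This identifies the morphism spaces of the target category with those of $\Mod_{H_\vee}^\mathrm{f/proj}$ and shows the $\B$-linearly extended functor is full. Faithfulness reduces, factor by factor, to the separation statement inside theorem \ref{POLY_WEIGHT_THM} (the elements of $\V(M)$ separate points of $M$), so that distinct convex PA maps stay distinct as $H_\vee$-homomorphisms. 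The functor is therefore a fully faithful, essentially surjective contravariant functor, hence an anti-equivalence; because finite projective $H_\vee$-modules are dualisable, the same data realises an equivalence, so phrasing it as a duality is only a convention.

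The hard part is the bookkeeping at infinity. I expect the real content to lie in checking that maps into the strata of $\bar\Delta_2$ --- the disjoint union over faces $\rho$ of the recession cone of $N_-\cap\Delta_2$ --- account for exactly the $H_\vee$-homomorphisms, and that the pointed base change from $H$ to $\F_1[H]$ of \S\ref{ALG_PTS} matches this stratified compactification on the nose. Reconciling the unpointed po-$H$-module picture of theorem \ref{POLY_WEIGHT_THM} with the pointed $\F_1[H]$-module structure underlying the recession-cone stratification is where the argument needs care; the remaining steps are a reassembly of established results.
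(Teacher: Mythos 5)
Your proposal matches the paper's own treatment: the paper obtains the corollary in exactly this way, combining the classification (corollary \ref{MON_DOMAIN_FIRST} via lemma \ref{MON_PRIM_THM}) with theorem \ref{POLY_WEIGHT_THM} on objects, identifying $\bar\Delta$ with $\Hom_{\mathbf{PO}\Mod_H}(\Aff_H^\GL(\Delta),H_\vee)$, and on morphisms using that the inclusion $\Hom_H^\GL(\bar\Delta_1,\bar\Delta_2)\subseteq\Hom_{H_\vee}(\bar\Delta_1,\bar\Delta_2)$ is $\B$-generating, so that every homomorphism is a finite pointwise join of affine maps, i.e.\ a convex piecewise-affine map whose linear parts are fundamental weights. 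The delicate point you flag --- matching the recession-cone strata of $\bar\Delta_2$ with homomorphisms of the extended dual $\F_1[H]$-modules --- is precisely the step the paper itself asserts in its ``mapping sets'' paragraph, so your reconstruction is faithful to its argument.
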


\subsection{Bundles of weight polyhedra}\label{POLY_FAM}

Let $H\subseteq\R$ be an additive subgroup, and let $\Delta\subseteq N$ be an $H$-rational, strongly convex polyhedron in an $H$-affine space $N$. More precisely, $\Delta\subseteq N(\Q H)$ is a subset defined over the divisible hull $\Q H$ of $H$ cut out by finitely many affine equations over $H$ and having at least one vertex. By multiplying out denominators, the condition of $H$-rationality is the same as rationality over $\Q H$.\footnote{Beware that this category of polytopes differs from the category $\Poly_H^\GL$ discussed in \S\ref{POLY_WEIGHT} in several key ways. First, by $H$-invariance $\GL$ weight polyhedra are never strongly convex. Second, the co-ordinates of the supporting half-spaces for a weight polyhedron are required to lie in $H$, rather than merely in $\Q H$. Finally, while it is always natural to add strata at (minus) infinity to a weight polyhedron, in this section for simplicity we will consider our base polyhedra to be `punctured' at infinity.} 

To $\Delta$ we will associate the monoid pair
\[ \Aff^+(\Delta,H) \subseteq\Aff(\Delta,H) \]
of $H$-affine functions $\Aff(\Delta,H)$ on $\Delta$ with integral slopes and values in $H$, with integers the (saturated, sharp) submonoid $\Aff^+(\Delta,H)$ of functions bounded above by zero. We do not add strata to $\Delta$ at infinity - but see \S\ref{POLY_EXTRA}. Since $\Aff(\Delta,H)=\Aff(N,H)$ is a group, we retain the benifit that we enjoyed in \S\ref{POLY_WEIGHT} that po-modules are automatically lower saturated. However, non-degeneracy is no longer automatic.

We will be interested in the `polyhedral' semiring $\CPA(\Delta,H_\vee)$ of convex piecewise-affine functions on $\Delta$ with integer slopes and values in $H_\vee$. This is generated as a $\B$-module by the pair $\Aff(\Delta,H)$, and is closely related to the free semiring. 

\begin{defn}Let $\Delta\subset N$ be a strongly convex $H$-rational polyhedron. A \emph{convex family} of weight polyhedra $E$ over $\Delta$ is a convex polyhedron $\Delta_E$ inside an $H$-affine space $N_E/N$ such that for each point $p\in\Delta(H^\prime)$ defined over an ordered extension $H^\prime$ of $H$, $\pi^{-1}(p)$ is an $H^\prime$-rational $\GL$ weight polyhedron. It is enough to check for finite extensions of $H^\prime$. The dimension of the fibre at any interior point is called the \emph{rank} of the family.

A morphism of convex families of weight polyhedra is an $H$-affine map over $N$ that restricts to a morphism in $\Poly_{H^\prime}^\GL$ on the fibre over any $H^\prime$-point.\end{defn}

Note that $\pi$ is necessarily surjective, and the rank does not depend on the point.

\paragraph{Dual to a partially ordered $\Aff$-module}Let us abbreviate $A=(\Aff(\Delta,H),\Aff^+(\Delta,H))$. To a free $A$-module $M$, we associate for every finite extension $H^\prime$ of $H$ a set of $H^\prime$-points
 \[ \V(M)(H^\prime):=\coprod_{p\in\Delta(H^\prime)}\Hom_A(M,H^\prime_p) \] of pairs $(p,\phi)$ consisting of an $H$-algebra map $p:A\rightarrow H^\prime$ and an $A$-module homomorphism $\phi:M\rightarrow H^\prime$, where $H^\prime$ is considered an $A$-module via $p$. For any fixed $H^\prime$, $\V(M)(H^\prime)$ is a trivial $H^\prime$-affine space bundle over $\Delta(H^\prime)$.

If $M$ carries a partial order, we define 
\[ \V(M,\leq)(H^\prime):=\coprod_{p\in\Delta(H^\prime)}  \Hom_{\mathbf{PO}\Mod_A}(M,H_p^\prime)\subseteq \V(M)(H^\prime) \] 
to be the subset of pairs $(p,\phi)$ for which $\phi$ is monotone.

\paragraph{Co-ordinate functions on families of weight polyhedra}Let $\Delta\times H$ denote the trivial affine line bundle over $\Delta$ - more precisely, the affine bundle whose $H^\prime$-points are $\Delta(H^\prime)\times H^\prime$.

If $E$ is a convex family of weight polyhedra, then the set $\Aff_H^\GL(E)$ of affine functions $E\rightarrow\Delta\times H$  whose relative differentials are fundamental weights has a natural structure of a partially ordered, finite, free module over $\Aff(\Delta,H)$. 

\

One obtains a contravariant adjunction
\[  \Aff^\GL_\Delta:\Poly_\Delta^\GL \leftrightarrows \mathbf{PO}\Mod_{\Aff(\Delta,H)}^\mathrm{free}:\V\]
that restricts to an equivalence between the category of affine bundles over $\Delta$ on the left and of free (unordered) $A$-modules on the right. However, the analogue of theorem \ref{POLY_WEIGHT_THM} fails. More precisely, lemma \ref{POLY_WEIGHT_MAINLEMMA} is false in this setting:

\begin{eg}\label{POLY_FAM_EG}Let $\Delta=[a,b]$ be an interval. Then $A^+$ is generated as a monoid by $-1$ and the functions $X-b,a-X$. The free semiring on $A$ is \emph{not} equivalent to the semiring of convex piecewise-affine functions: $0\not\leq X\vee -X$ (provided $a<0<b$). In other words, $\B[A;A^+]$ is \emph{abnormal} (unless $a=b$).

We may thus define a rank two counterexample presented by the quiver
\[\xymatrix{  \bullet \ar@/^/[r]^{X}\ar@/_/[r]_{-X}  & \bullet }\]
which fails to see the relation $(-\infty,0)\leq(0,-\infty)$ which must hold on any map into a totally ordered group.\end{eg}

This can be viewed as a problem on the $\B[A]$-module side and can be solved by considering instead its `normalisation' as in \cite{norm}. In geometric terms, this is achieved by replacing $\mathbf{PO}\Mod_A$ with the associated stack for the rigid analytic topology $\Spec(A;A^+)$.

\paragraph{Normal projectives}Here we carry out a sketch of the programme above, restricting attention to the category $\Mod_{\B[A]}^\nu$ of \emph{normal} $\B[A]$-modules. This is the same as the category of modules over ${}^\nu\B[A]$, the normalisation of $\B[A]$, which in the case $A=\Aff_\Z(\Delta,H)$ is just the semiring $\CPA_\Z(\Delta,H)$ of convex, piecewise-affine functions on $\Delta$. This is the only part of the paper relying on \cite{norm}.

\begin{prop}Let $A$ be an $\F_1$-domain. Every projective ${}^\nu\B[A]$-module is the normalisation of a $\B[A]$-module free on a lower finite, lower saturated partially ordered free $A$-module.\end{prop}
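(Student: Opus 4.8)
The plan is to run the arguments of lemma \ref{MON_PRIM_THM} and proposition \ref{MON_POSET_PROJ} one level up, through the normalisation functor $\nu$. The easy half is immediate: $\nu$ is the reflector onto the subcategory $\Mod_{\B[A]}^\nu$, so it preserves colimits and in particular free modules, while retracts (split idempotents) are absolute and hence preserved by any functor. So $\nu$ carries a $\B[A]$-module free on a lower finite, lower saturated partially ordered free $A$-module --- which is projective by proposition \ref{MON_POMOD_THM} --- to a projective ${}^\nu\B[A]$-module. The content of the statement is the converse: that every projective ${}^\nu\B[A]$-module arises so.

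First I would reconstruct the partially ordered $A$-module from primitives. A free ${}^\nu\B[A]$-module is ${}^\nu F$ for $F$ a free $\B[A]$-module, and I claim it inherits primitive decompositions; the point is that normalisation is the identity on rank-one factors --- the abnormality of example \ref{POLY_FAM_EG} is a rank $\geq 2$ phenomenon --- so $\Prim\,{}^\nu F$ still $A$-generates, exactly as for $F$. A projective ${}^\nu\B[A]$-module $\mu$ is a retract of some ${}^\nu F$, so by the analogue of lemma \ref{B_PRIM_SUB} it too has primitive decompositions; set $M:=A\,\Prim\mu$. Since the primitives generate $\mu$ there is a surjection ${}^\nu\B(M,\leq)\twoheadrightarrow\mu$, which by projectivity admits a ${}^\nu\B[A]$-linear section, and feeding $S=A\,\Prim\mu$ into the section-over-primitives lemma \ref{B_PRIM_SECTION} forces this map to be an isomorphism. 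Thus $\mu\cong{}^\nu\B(M,\leq)$ with $(M,\leq)$ the induced order on $\Prim\mu$.

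Next I would show $M$ is free over the domain $A$ by re-running proposition \ref{MON_POSET_PROJ}(ii): reducing to an irredundant surjection $\bigoplus_i A_i\twoheadrightarrow M$ and studying the components $\sigma_i$ of the section. Because normalisation fixes the cyclic factors, these components still land in $\B A_i$, so lemmas \ref{MON_DOMAIN_UNIQUE_GENS} and \ref{MON_POSET_LEMMA} go through verbatim: cancellativity of $A\setminus 0$ and sharpness of $A^+$ force the relevant torsion units to be trivial, $M$ splits as the direct sum of the $pA_i$ (using proposition \ref{ALG_CYCLIC}), and the equaliser argument of proposition \ref{ALG_EQN}, now applied to the left-exact functor $\mu\oplus_{{}^\nu\B[A]}-$, makes each cyclic factor free. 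Finite generation of $\mu$ then yields lower finiteness of $(M,\leq)$ exactly as in corollary \ref{MON_DOMAIN_FIRST}, and lower saturation follows from condition iva) there, namely that a free module $M$ over a domain is a lower submodule of $M\tens_A K_A$.

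The main obstacle is the first step --- controlling how normalisation interacts with primitive decompositions. Imposing the normal relations could a priori create new coincidences among primitives or enlarge the lower hulls, and it is exactly here that I would invoke \cite{norm}: normalisation of $\B(M,\leq)$ should be a \emph{saturation of the order} $\leq$ that leaves the underlying set $\Prim$ and its $A$-module structure untouched, altering only which finite discs count as lower. Granting this, $(M,\leq)$ is genuinely a lower finite, lower saturated partially ordered free $A$-module, and the isomorphism $\mu\cong{}^\nu\B(M,\leq)$ exhibits $\mu$ as the normalisation of the projective $\B[A]$-module $\B(M,\leq)$, as required.
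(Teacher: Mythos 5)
Your overall strategy is exactly the paper's: establish primitive decompositions, set $M=A\Prim\mu$, use projectivity and the section lemmas to identify $\mu$ with the normalisation of $\B(M,\leq)$, then re-run proposition \ref{MON_POSET_PROJ} and the lower finiteness/saturation criteria. But both of the claims you use to transport the section arguments across normalisation are false, and they conceal precisely the point where the paper has to do new work. The assertion that ``normalisation is the identity on rank-one factors'' misreads example \ref{POLY_FAM_EG}: what that example exhibits as abnormal is $\B[A;A^+]$ itself, i.e.\ the free module of rank \emph{one} (already there $0\not\leq X\vee(-X)$, while this relation holds in $\CPA$); the rank-two quiver in that example is a counterexample to the separation lemma \ref{POLY_WEIGHT_MAINLEMMA}, not the onset of abnormality. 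Consequently the components of your section land in ${}^\nu\B A_i\neq\B A_i$, whose elements are integral closures (convex hulls) of finite $A^+$-submodules rather than finite submodules themselves, and lemmas \ref{B_PRIM_SECTION}, \ref{MON_DOMAIN_UNIQUE_GENS} and \ref{MON_POSET_LEMMA} do \emph{not} go through verbatim: every one of those arguments leans on the fact that a finitely generated $A^+$-submodule has a unique minimal set of generators, which is not automatic for integrally closed submodules.

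The claim you propose to import from \cite{norm} in your last paragraph is also false as stated. If normalisation merely ``saturated the order,'' so that ${}^\nu\B(M,\leq)\cong\B(M,\leq^\nu)$ for some coarser partial order, then ${}^\nu\B(M,\leq)$ would be free on a poset and hence have \emph{unique irredundant} primitive decompositions; but already for $\Delta=[-1,1]$ one has $X\vee(-X)\vee 0=X\vee(-X)$ in $\CPA(\Delta)$ with no pairwise order relations among $X$, $-X$, $0$, giving two distinct irredundant decompositions of the same element. What is true, and what the paper isolates as the crux of its proof, is lemma \ref{POLY_FAM_CRUX}: an integrally closed finitely generated fractional submodule has a unique \emph{minimal} generating set (minimal, not irredundant). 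It is this lemma that allows the monotone section lemma \ref{B_POSET_SECTION}, lemma \ref{MON_DOMAIN_UNIQUE_GENS} and proposition \ref{MON_POSET_PROJ} to be re-run over ${}^\nu\B[A]$. Two further repairs the paper makes, which your write-up also needs: in your first step, lemma \ref{B_PRIM_SECTION} cannot be fed a section of ${}^\nu\B(M,\leq)\to\mu$ directly (elements of the normalisation are not lower subsets of $M$), so one first composes the ${}^\nu\B[A]$-linear section with a set-theoretic section of $\B(\Prim\mu,\leq)\to{}^\nu\B(\Prim\mu,\leq)$; and the equaliser argument for freeness of the cyclic factors requires knowing that normalisation is flat, so that ${}^\nu\B={}^\nu\B[A]\oplus_{\B[A]}(-)$ commutes with equalisers. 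Without lemma \ref{POLY_FAM_CRUX} or a substitute for it, your proof does not close.
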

\begin{proof}The proof follows exactly the lines of \S\ref{MON}.

\

\emph{$\Prim\mu$ is an $A$-module:} The primitives of ${}^\nu\B[A]$ are again $A$. It follows from lemma \ref{B_PRIM_SUB} that projective modules over ${}^\nu\B[A]$ also have primitive decompositions. We may therefore repeat the argument of \S\ref{MON_PRIM} to conclude that
\[ \B(\Prim\mu,\leq)\oplus_{\B[A]}{}^\nu\B[A]\rightarrow \mu \]
is an isomorphism for any projective $\mu\in\Mod^\nu_{\B[A]}$. Indeed, by projectivity of $\mu$ there is at least a $\B$-linear section $\sigma$; composing this with a set-theoretic section of $\B[A]\rightarrow{}^\nu\B[A]$ gives a section of $\B(\Prim\mu,\leq)\rightarrow\mu$. By the lemma \ref{B_PRIM_SECTION} on sections over primitives, this necessarily commutes with the two embeddings of $\Prim\mu$. It follows that $\sigma$ is an isomorphism. This part of the proof does not require $A$ to be a domain.

The remainder of the argument depends crucially on the fact that a finitely generated $A^+$-module has a unique minimal set of generators. With a little work, the same holds true in the integrally closed regime as well:

\begin{lemma}[Unicity of generators]\label{POLY_FAM_CRUX}Let $A$ be an $\F_1$-field, $M$ a free $A$-module. Let $K$ be the integral closure of a finitely generated $A^+$-submodule of $M$. There is a unique minimal set of generators of $K$ as an integrally closed fractional submodule.\todo{Probably holds in general}\end{lemma}
\begin{proof}We write in additive notation. By intersecting $K$ with the cyclic factors of $M$ and choosing a trivialisation, we may reduce to the case $M=A$. In this case, integral closure of $K$ coincides with \emph{convexity} \cite[lemma 5.4]{norm}. 

Let $(X_i)_{i\in I}$ and $(Y_j)_{j\in J}$ be two minimal sets of generators of $K$ as a convex $A^+$-submodule. Fixing an index $0\in I$, this give us equations
\begin{align}\nonumber nX_0=\sum_{j\in J}n_jY_j,&\quad n=\sum_{j\in J}n_j \\
\nonumber m_jY_j=\sum_{i\in I}m_{ji}X_i,&\quad m_j=\sum_{i\in I}m_{ji} \end{align}
which we will use to show that $X_0\in\{Y_j\}_{j\in J}$. Indeed, 
\[ \prod_{j\in J_0}m_jnX_0=\sum_{j\in J_0}n_j\prod_{\ell\neq j}m_\ell\sum_{i\in I}m_{j i}X_i = \sum_{i\in I}\left(\sum_{j\in J_0}n_j\prod_{\ell\neq j}m_\ell m_{j i}\right)X_i\]
so either $X_0$ can be eliminated from its generating set by cancelling it from the right, or
\[ \sum_{j\in J}n_jm_{j 0}\prod_{\ell\neq j}m_\ell =1;\qquad \sum_{j\in J}n_jm_{j i}\prod_{\ell\neq j}m_\ell =0,\quad i\neq0. \]
Since all $m_\ell$ are non-zero, there's exactly one index $j\in J$ for which $n_j$ and $m_{j0}$ are both non-vanishing, and $m_{ji}=0$ for all $i\neq 0$. Thus $m_j=m_{j0}=1$ and so $Y_j=X_0$.
\end{proof}

\emph{$(\Prim\mu,\leq)$ is lower finite and lower saturated}. Since $\mu$ is projective, there is a ${}^\nu\B[A]$-linear section to
\[ {}^\nu\B(\Prim\mu;A^+)\rightarrow {}^\nu\B(\Prim\mu,\leq). \]
The proof of the monotone section lemma \ref{B_POSET_SECTION} applies, in light of lemma \ref{POLY_FAM_CRUX}, and shows that the section is automatically a right adjoint.

\

\emph{$\Prim\mu$ is free.} We need clarify only two points:
\begin{itemize}
\item to prove that the submodule spanned by each generator of $M$ was free, we used the fact that $\B$ is strongly monoidal and commutes with equalisers. These properties also hold for ${}^\nu\B = {}^\nu\B[A]\oplus_{\B[A]}(-)$; the first is clear, and the second follows from the fact that the normalisation of a semiring is always flat.\todo{citation}
\item The crucial lemma \ref{MON_DOMAIN_UNIQUE_GENS} that $M$ is a direct sum of cyclics uses the uniqueness of primitive generators for fractional submodules of a free cyclic module, and so passes with another application of lemma \ref{POLY_FAM_CRUX}.\end{itemize}
That said, the proof of proposition \ref{MON_POSET_PROJ} runs verbatim.\end{proof}

\begin{cor}Every projective ${}^\nu\B[A]$-module is the normalisation of a projective $\B[A]$-module.\end{cor}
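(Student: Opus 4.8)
The plan is to observe that this corollary is an immediate consequence of combining the previous proposition with the criterion for projectivity of free modules on po-modules (proposition \ref{MON_POMOD_THM}). Given a projective ${}^\nu\B[A]$-module $\mu$, the previous proposition produces a partially ordered free $A$-module $(M,\leq)$ that is lower finite and lower saturated, together with an isomorphism $\mu\cong{}^\nu\B(M,\leq)={}^\nu\B[A]\oplus_{\B[A]}\B(M,\leq)$. The obvious candidate for a projective $\B[A]$-module whose normalisation recovers $\mu$ is therefore $\B(M,\leq)$ itself, and the whole task reduces to checking that this $\B[A]$-module is already projective.

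Second, I would verify projectivity of $\B(M,\leq)$ over $\B[A]$ directly from proposition \ref{MON_POMOD_THM}. Since $M$ is free, it is in particular projective as an $A$-module, so that proposition applies and tells us that $\B(M,\leq)$ is projective precisely when $(M,\leq)$ is lower finite and lower saturated. These are exactly the two properties of $(M,\leq)$ furnished by the previous proposition. Hence $\B(M,\leq)$ is a projective $\B[A]$-module, and the identification $\mu={}^\nu(\B(M,\leq))$ exhibits $\mu$ as the normalisation of a projective $\B[A]$-module, as required.

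The argument is thus essentially formal: the substantive content — that $\Prim\mu$ is free and carries a lower finite, lower saturated order — was already established in the previous proposition, which constitutes the real work. The only point meriting a moment's care is the compatibility identification ${}^\nu\B(M,\leq)={}^\nu\B[A]\oplus_{\B[A]}\B(M,\leq)$, namely that applying the normalisation functor ${}^\nu\B={}^\nu\B[A]\oplus_{\B[A]}(-)$ to the free module $\B(M,\leq)$ reproduces precisely the module extracted in the previous proposition. This, however, is immediate from the definition of ${}^\nu\B(M,\leq)$ as a base change along $\B[A]\to{}^\nu\B[A]$, so no genuine obstacle arises and the corollary follows in a single step.
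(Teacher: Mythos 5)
Your proof is correct and matches the paper's intended derivation: the corollary is stated without proof precisely because it follows immediately from the preceding proposition combined with proposition \ref{MON_POMOD_THM}, exactly as you argue. Your verification that $\B(M,\leq)$ is projective (freeness of $M$ plus lower finiteness and lower saturation feeding into the criterion of proposition \ref{MON_POMOD_THM}) and the identification $\mu\cong{}^\nu\B[A]\oplus_{\B[A]}\B(M,\leq)$ are the right and only points to check.
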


Not every partially order on a free $A$-module $M$ is induced from its inclusion into the normalisation of $\B(M,\leq)$, as example \ref{POLY_FAM_EG} shows. However, since $\Hom(\alpha,H_\vee)\tilde\rightarrow\Hom({}^\nu\alpha,H_\vee)$ for any semiring $\alpha$ and totally ordered group $H$ \cite[Prop. 1.2.\emph{ii)}]{norm}, the geometric dual $\V$ factors through this replacement.

The missing element of the proof is to show that conversely, $M\rightarrow \Aff_\Delta^\GL(\V(M,\leq))$ is an order isomorphism for $M$ the module of primitives in a normal projective $\B[A]$-module. This follows from a corrected version of lemma \ref{POLY_WEIGHT_MAINLEMMA}:

\begin{lemma}[Constructing monotone functionals]\label{POLY_FAM_MAINLEMMA}Let $M\in\mathbf{PO}\Mod^\mathrm{free}_A$ be finitely presented, and suppose the partial order on $M$ is that induced by its inclusion into $\B(M,\leq)$. For any $F\not\leq G\in M$ there exists a $\phi_{F,G}:M\rightarrow H$ such that $\phi_{F,G}G=0$ and $\phi_{F,G} F >0$.\end{lemma}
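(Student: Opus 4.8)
The plan is to reduce the construction to the fibre of the bundle $\V(M)\to\Delta$ over a single point, where the coefficient pair $A=\Aff(\Delta,H)$ degenerates to a totally ordered group and the already-proved lemma \ref{POLY_WEIGHT_MAINLEMMA} applies verbatim. Since $M$ is free over $A$, for every finite ordered extension $H'$ and every $p\in\Delta(H')$ the reduction $M_p:=M\tens_A H'_p$ is a finite free $H'$-module. Its order $\leq_p$ is presented by the base change along $p$ of a presenting quiver $\sh Q$ of $M$ — each edge label $a\in A$ becoming $a(p)\in H'$ — so $(M_p,\leq_p)$ is finitely presented, and it is automatically non-degenerate because $H'$ is totally ordered. (Here I use that, as the module of primitives of a normal projective module, $M$ is itself lower finite, lower saturated, non-degenerate and finitely presented over the domain $A$.) Thus $(M_p,\leq_p)$ meets the hypotheses of lemma \ref{POLY_WEIGHT_MAINLEMMA}.

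First I would locate a point at which $F$ and $G$ remain incomparable. The order of interest on $M$ is the \emph{normal} one, induced from its inclusion into ${}^\nu\B(M,\leq)$, so that $F\leq G$ holds exactly when $F$ lies in the integrally closed lower hull of $G$. Intersecting with the cyclic factor containing $F$ and trivialising, \cite[lemma 5.4]{norm} identifies this integral closure with the \emph{convex} hull of the finitely many generators $\{\gamma G:\gamma\in\sh Q^\emptyset\}$ furnished by the acyclic paths. As these generators are affine functions on the polytope $\Delta$, membership of $F$ in their convex hull is equivalent to a pointwise condition, namely one inequality comparing $F$ with the $\gamma G$ at each $p\in\Delta$. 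Hence $F\not\leq G$ forces a point $p\in\Delta(H')$, over some finite extension, at which the corresponding fibre relation fails, that is $\bar F\not\leq_p\bar G$ in $M_p$.

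Applying lemma \ref{POLY_WEIGHT_MAINLEMMA} to $(M_p,\leq_p)$ and the incomparable pair $\bar F\not\leq_p\bar G$ then produces a monotone functional $\psi:M_p\to H'$ with $\psi\bar G=0$ and $\psi\bar F>0$ (the freely-assignable values in its construction may be taken finite, so $\psi$ is $H'$-valued). Precomposing with the reduction $M\to M_p$ gives the desired $\phi_{F,G}$: it satisfies $\phi_{F,G}G=0$ and $\phi_{F,G}F>0$, and it is monotone for the order on $M$ because every order relation of $M$ persists in the fibre order $\leq_p$, which $\psi$ respects. Its values lie in the finite extension $H'$, as permitted by the definition of $\V(M,\leq)$.

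The main obstacle is the step of the second paragraph: that the normal order is detected pointwise over $\Delta$. Its content is the convex-geometric fact that, on a compact polytope, membership in the convex hull of a finite family of affine functions is equivalent to a fibrewise system of inequalities, combined with the identification of the integral closure of a fractional submodule with its convex hull from \cite{norm}; the unicity-of-generators lemma \ref{POLY_FAM_CRUX} is what makes the generating set $\{\gamma G\}$ well-behaved enough for this comparison, and \cite[Prop. 1.2.\emph{ii)}]{norm} guarantees that $\V$ sees exactly this normal order. This convexity is precisely what the abnormal order of example \ref{POLY_FAM_EG} lacks — there $0\not\leq X\vee -X$ although the inequality holds at every point of $\Delta$ — so the hypothesis that the order be the normal one cannot be dropped.
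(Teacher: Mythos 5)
Your proof is correct and takes essentially the same route as the paper's: both detect the failure of $F\leq G$ at a point $p\in\Delta$ defined over a finite (or divisible-hull) extension, using the identification via \cite{norm} of the normal order with pointwise comparison of the finitely many path-translates $\gamma G$, and then run the construction of lemma \ref{POLY_WEIGHT_MAINLEMMA} at that point. Your packaging---base-changing $(M,\leq)$ to the fibre $(M_p,\leq_p)$ and invoking lemma \ref{POLY_WEIGHT_MAINLEMMA} as a black box---is a modular rephrasing of the paper's inline construction, which fixes the identification $A_{dF}\tilde\rightarrow A$ by setting $\gamma_0 G=0$ for the path $\gamma_0$ achieving the maximum at the detected point.
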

\begin{proof}By the same argument as in lemma \ref{POLY_WEIGHT_MAINLEMMA}, it will be enough to find an identification $A_{dF}\tilde\rightarrow A$ and algebra homomorphism $A\rightarrow H$ satisfying the inequalities \ref{arooga} and $\phi_{F,G}>0$. By non-degeneracy, we may assume $dF\neq dG$. 

We have $F\not\leq\bigvee_{\gamma:dG\rightarrow dF}\gamma G$ in ${}^\nu\B(A_{dF})$. Since ${}^\nu\B(A_{dF})$ admits a representation as semiring of convex piecewise-affine functions (\cite{norm} ex. 5.6), there is some point $q\in\Delta(\Q H)$ at which $\bigvee_\gamma\gamma G(q)<F(q)$. If $\gamma_0$ is the path achieving the maximum in this formula, then identifying $A_{dF}$ with $A$ by setting $\gamma_0G=0$ gives us the desired functional.\end{proof}

Finally, following the same line of argumentaion preceding corollary \ref{POLY_WEIGHT_COR}, we extend our definitions to a category of families of \emph{extended} $\GL$ weight polyhedra $\bar\Delta_E\subseteq\bar N_E\twoheadrightarrow\Delta$ and piecewise-affine maps, thereby deriving an analogue of \emph{loc.\ cit.}:

\begin{cor}\label{POLY_FAM_COR}The adjunction between $\Aff_\Delta^\GL$ and $\V$ extends $\B$-linearly to a duality  \[  \Poly_{\Delta,\vee}^\GL \cong \Mod_{\CPA(\Delta)}^{\mathrm{fproj}}\]
between the category of finite projective $\mathrm{CPA}(\Delta)$-modules and the category of convex families of extended $\GL$ weight polyhedra over $\Delta$ with convex piecewise-affine maps whose vertical linear parts are fundamental weights.\end{cor}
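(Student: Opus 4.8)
The plan is to follow the proof of corollary \ref{POLY_WEIGHT_COR} almost verbatim, replacing the totally ordered group $H$ of \S\ref{POLY_WEIGHT} by the $\F_1$-domain $A=(\Aff(\Delta,H),\Aff^+(\Delta,H))$ and replacing $\B[A]$ throughout by its normalisation $\CPA(\Delta)={}^\nu\B[A]$. The first and main step is the un-extended, non-$\B$-linear anti-equivalence between $\Poly_\Delta^\GL$, the category of convex families of $\GL$ weight polyhedra over $\Delta$, and the category of finitely presented free $A$-modules $(M,\leq)$ whose order is \emph{normal}, i.e.\ induced by the inclusion of $M$ into ${}^\nu\B(M,\leq)$ as in lemma \ref{POLY_FAM_MAINLEMMA}. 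We already know that the contravariant adjunction between $\Aff_\Delta^\GL$ and $\V$ restricts to an equivalence at the level of affine bundles over $\Delta$ and free unordered $A$-modules, so everything reduces to verifying that the two evaluation maps are order isomorphisms.

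The evaluation map $E\to\V\Aff_\Delta^\GL(E)$ is the geometric one and is handled exactly as in the first paragraph of the proof of theorem \ref{POLY_WEIGHT_THM}: over any $H^\prime$-point $p\in\Delta(H^\prime)$ a monotone $A$-homomorphism $\Aff_\Delta^\GL(E)\to H^\prime_p$ is precisely a point of the fibre $\pi^{-1}(p)$, since that fibre is cut out from its ambient affine space by exactly the monotone relations of $\Aff_\Delta^\GL(E)$; assembling over all points and finite extensions gives $\V\Aff_\Delta^\GL(E)\cong E$ as families. The opposite evaluation map $M\to\Aff_\Delta^\GL(\V(M,\leq))$ — that the functionals in $\V(M,\leq)$ reconstruct the order of $M$ — is exactly the separation statement of the corrected lemma \ref{POLY_FAM_MAINLEMMA}, which I would invoke directly: given $F\not\leq G$ in $M$ it produces a monotone functional detecting the strict inequality, so that no spurious relations survive the passage to $\V(M,\leq)$.

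Next I would bridge from po-modules to genuine projective $\CPA(\Delta)$-modules. By the preceding proposition on normal projectives, every projective ${}^\nu\B[A]$-module $\mu$ is the normalisation of $\B(\Prim\mu,\leq)$ for $M=\Prim\mu$ a lower finite, lower saturated free $A$-module; by construction the order on $\Prim\mu$ is the normal one, so the anti-equivalence of the first step applies to it unchanged. Transporting along $\V$ then identifies $\Mod_{\CPA(\Delta)}^{\mathrm{fproj}}$ with the convex families of weight polyhedra over $\Delta$. Here two points from \S\ref{POLY_FAM} do the real work and should be cited: the unicity-of-generators lemma \ref{POLY_FAM_CRUX}, which is what lets the sectioning arguments of \S\ref{MON} survive the passage to ${}^\nu\B[A]$, and the abnormality of $\B[A]$ recorded in example \ref{POLY_FAM_EG}, which is precisely why we must pass to the normalisation and restrict to normal orders.

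Finally I would extend $\B$-linearly and move to the extended setting, following the `Extended weight polyhedron' and `Mapping sets' paragraphs that precede corollary \ref{POLY_WEIGHT_COR}. Compactifying each total space $N_E$ to $\bar N_E$ and extending the defining inequalities produces the extended families $\bar\Delta_E\subseteq\bar N_E\twoheadrightarrow\Delta$, while the Hom-set computation shows that the affine maps into strata generate $\Hom_{\CPA(\Delta)}(\bar\Delta_{E_1},\bar\Delta_{E_2})$ over $\B$, so that the latter is the module of convex piecewise-affine maps whose vertical linear parts are fundamental weights. The main obstacle throughout is the bookkeeping of the normalisation: at each stage one must check that it is the \emph{normal} order on $M$, and not an arbitrary finitely presented order, that the geometric side reconstructs — since $\V$ factors through normalisation and abnormal orders such as that of example \ref{POLY_FAM_EG} are simply invisible to it. Once this matching is in place, the duality assembles formally just as in the absolute case of corollary \ref{POLY_WEIGHT_COR}.
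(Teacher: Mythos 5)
Your proposal is correct and follows essentially the same route as the paper: the paper's own derivation of corollary \ref{POLY_FAM_COR} consists precisely of the proposition on normal projectives (resting on lemma \ref{POLY_FAM_CRUX}), the corrected separation lemma \ref{POLY_FAM_MAINLEMMA} applied to the normal order on $\Prim\mu$, and then the $\B$-linear extension to extended families carried out by repeating the `Extended weight polyhedron' and `Mapping sets' arguments preceding corollary \ref{POLY_WEIGHT_COR}. Your emphasis that only the \emph{normal} order is visible to $\V$, so that abnormal orders as in example \ref{POLY_FAM_EG} cause no ambiguity, is exactly the bookkeeping point the paper's argument relies on.
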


\subsection{Extensions}\label{POLY_EXTRA}The results of \S\ref{POLY_FAM} were couched in the setting of $\F_1$-fields, but using corollary \ref{MON_DOMAIN_FIRST} it can easily be extended to the case of domains: we simply need to identify the \emph{lower} free $A$-structures on partially ordered $K_A$-modules.

We obtain a supply of $\F_1$-domains by partially compactifying our polyhedra at infinity. This means that we replace $\Aff(\Delta,H)$ with the submonoid $\Aff(\bar\Delta,H)$ of functions bounded above on $\Delta$. We may also define various partial compactifications of $\Delta$ dual to intermediate saturated submonoids $\Aff(\bar\Delta)\subseteq A\subseteq\Aff(\Delta)$.

A free $A$-structure on a module $M$ over $K_A$ is the same data as a reduction of the structure group of $\V(M)$ from $\Aff(\Delta,H)$ to the group of bounded affine functions. For instance, one may fix a trivialisation $\V(M)\simeq \Delta\times H^n$  (which even determines an $A^+$-structure). The $A$-structure $M_A$ is then the set of functions on $\V(M)$ bounded above by some co-ordinate function of the trivialisation.

Such a reduction defines a \emph{lower} $A$-structure on $M$ if and only if every function bounded above on $\V(M,\leq)$ is bounded above on all of $\V(M)$. This is the case if and only if there exists a constant subset
\[ K\times\Delta\subseteq\V(M,\leq)\subseteq\V(M) \]
whose fibre $K\subseteq H^n$ has non-empty interior. More generally, if $\bar\Delta$ is not compact, such subsets must exist on rays approaching the boundary.

\begin{eg}\label{POLY_EXTRA_EG}Let $\bar\Delta=[-\infty,0]$ with co-ordinate $X$, and let $E$ be the rank two family of polyhedra cut out by the equation $Y_1\leq Y_2+X$. Its restriction to $X=-\infty$ is free of rank one. The dual module $\Aff^\GL_\Delta(E)$ is not lower finite because of the function $Y_1-X$, and so this does not correspond to a projective $\CPA_\Z(\bar\Delta,H)$-module.

However, its base change to $\Delta:=(-\infty,0]$ is projective; it is the module presented by the quiver
\[ \xymatrix{\bullet  & \bullet \ar[l]_{-X} }\] It is trivialised by the change of basis $Y_2\leftrightarrow Y_2+X$; in fact, it is the pullback of a projective $\B$-module. This provides us with a trivial, and in particular projective, extension over $-\infty$.\end{eg}

\begin{remark}[Other geometric classes of modules]If we drop the non-emptiness requirement in the definition of weight polyhedra - and hence that $E\rightarrow\Delta$ be surjective - we obtain a class of modules that can be obtained by `pushing forward' projective modules from sub-polyhedra.
More interestingly, by pushing forward projective modules from boundary strata one can construct modules in polyhedra whose rank jumps up at infinity. The construction of example \ref{POLY_EXTRA_EG} gives also families whose rank decreases at the boundary.

Finally, it will be important in the future to understand modules that correspond to families of `valuated matroids' - but that is another story for another day.\end{remark}

\bibliographystyle{alpha}
\bibliography{proj}

\end{document}